\documentclass[a4paper,10pt,english,french]{smfart}
\usepackage[frenchb,english]{babel}
\usepackage[T1]{fontenc}
\usepackage{lmodern}
\usepackage{graphicx}
\usepackage{tabularx}
\usepackage{amsmath,amsthm,amssymb,amsfonts}
\usepackage[a4paper,left=4cm,right=4cm,top=4cm,bottom=4cm]{geometry}
\numberwithin{equation}{section}
\title[Affine congruences and rational points on a certain cubic surface]{Affine congruences and rational points on a certain cubic surface}
\author{Pierre Le Boudec}
\subjclass{$11$D$45$, $14$G$05$}
\keywords{Affine congruences, rational points, Manin's conjecture, cubic surfaces, universal torsors}
\address{Institute for Advanced Study \\ School of Mathematics \\ Einstein Drive \\
Simonyi Hall $-$ \text{Office $111$} \\ Princeton, NJ $08540$ \\ USA}
\email{pleboudec@ias.edu}

\begin{document}

\makeatletter
\def\imod#1{\allowbreak\mkern10mu({\operator@font mod}\,\,#1)}
\makeatother

\newtheorem{lemma}{Lemma}
\newtheorem{theorem}{Theorem}
\newtheorem{corollaire}{Corollaire}
\newtheorem{proposition}{Proposition}

\newcommand{\vol}{\operatorname{vol}}
\newcommand{\D}{\mathrm{d}}
\newcommand{\rank}{\operatorname{rank}}
\newcommand{\Pic}{\operatorname{Pic}}
\newcommand{\Gal}{\operatorname{Gal}}
\newcommand{\meas}{\operatorname{meas}}
\newcommand{\Spec}{\operatorname{Spec}}
\newcommand{\eff}{\operatorname{eff}}
\newcommand{\rad}{\operatorname{rad}}
\newcommand{\sq}{\operatorname{sq}}

\begin{abstract}
We establish estimates for the number of solutions of certain affine congruences. These estimates are then used to prove Manin's conjecture for a cubic surface split over $\mathbb{Q}$ and whose singularity type is $\mathbf{D}_4$. This improves on a result of Browning and answers a problem posed by Tschinkel.
\end{abstract}

\maketitle

\tableofcontents

\section{Introduction}

The aim of this paper is to study the asymptotic behaviour of the number of rational points of bounded height on the cubic surface $V \subset \mathbb{P}^3$ defined over $\mathbb{Q}$ by
\begin{eqnarray*}
x_0(x_1+x_2+x_3)^2 - x_1x_2x_3 & = & 0 \textrm{.}
\end{eqnarray*}
Manin's conjecture \cite{MR974910} and the refinements concerning the value of the constant due to Peyre \cite{MR1340296} and to Batyrev and Tschinkel \cite{MR1679843} describe precisely what should be the solution of this problem.

The variety $V$ has a unique singularity at the point $(1:0:0:0)$, which is of type $\mathbf{D}_4$. In addition, it contains precisely six lines which are defined by $x_0 = x_i = 0$ and $x_1+x_2+x_3 = x_i =0$ for $i \in \{1,2,3\}$. On these six lines, the rational points accumulate, hiding the interesting behaviour of the number of rational points lying outside the lines. We thus let $U$ be the open subset formed by removing the six lines from $V$. We also let $H : \mathbb{P}^3(\mathbb{Q}) \to \mathbb{R}_{> 0}$ be the exponential height defined for a vector
$(x_0, x_1, x_2, x_3) \in \mathbb{Z}^{4}$ satisfying $\gcd(x_0, x_1, x_2, x_3) = 1$ by
\begin{eqnarray*}
H(x_0 : x_1 : x_2 : x_3) & = & \max \{ |x_0|, |x_1|, |x_2|, |x_3| \}\textrm{.}
\end{eqnarray*}
The quantity in which we are interested is then defined by
\begin{eqnarray*}
N_{U,H}(B) & = & \# \{x \in U(\mathbb{Q}), H(x) \leq B \} \textrm{.}
\end{eqnarray*}
In this specific context, Manin's conjecture states that
\begin{eqnarray*}
N_{U,H}(B) & = & c_{V,H} B \log(B)^6 (1+o(1)) \textrm{,}
\end{eqnarray*}
where $c_{V,H}$ is a constant which is expected to agree with Peyre's prediction. In a more general setting, the exponent of the logarithm is expected to be equal to the rank of the Picard group of the minimal desingularization of $V$ minus one. In comparison, the number $N_{\mathbb{P}^1,H}(B)$ of rational points of bounded height lying on a line satisfies $N_{\mathbb{P}^1,H}(B) = c_{\mathbb{P}^1,H} B^2 (1+o(1))$ where $c_{\mathbb{P}^1,H} > 0$.

Manin's conjecture for singular cubic surfaces has received an increasing amount of attention over the last years (see for instance \cite{MR2430199}, \cite{MR2332351} and \cite{2A2+A1}). The interested reader is invited to refer to the recent work of the author \cite[Section~$1$]{2A2+A1} for a comprehensive overview of what is currently known concerning singular cubic surfaces defined over $\mathbb{Q}$.

Any cubic surface in $\mathbb{P}^3$ defined over $\mathbb{C}$, which has only isolated singularities, and which is not a cone over an elliptic curve can only have $\mathbf{ADE}$ singularities (see \cite[Proposition $0.2$]{MR940430}). In Table $1$ below, we recall the classification over $\overline{\mathbb{Q}}$ of cubic surfaces with $\mathbf{ADE}$ singularities and we give the number of lines contained by the surfaces. Moreover, we indicate if Manin's conjecture is known for at least one example of surface of the specified singularity type by giving the corresponding reference. Note that the difficulty of proving Manin's conjecture increases as we go higher in Table $1$.

\vspace{10pt}
\begin{center}
\renewcommand{\arraystretch}{1.2}
\begin{tabular}{|c|c|c|}
\hline
Singularity type & Number of lines & Result \\
\hline
\hline
$\mathbf{A}_1$ & $21$ &  \\
\hline
$2 \mathbf{A}_1$ & $16$ &  \\
\hline
$\mathbf{A}_2$ & $15$ & \\
\hline
$3 \mathbf{A}_1$ & $12$ & \\
\hline
$\mathbf{A}_2 + \mathbf{A}_1$ & $11$ & \\
\hline
$\mathbf{A}_3$ & $10$ & \\
\hline
$4 \mathbf{A}_1$ & $9$ & \\
\hline
$2 \mathbf{A}_1 + \mathbf{A}_2$ & $8$ & \\
\hline
$\mathbf{A}_3 + \mathbf{A}_1$ & $7$ & \\
\hline
$2 \mathbf{A}_2$ & $7$ & \\
\hline
$\mathbf{A}_4$ & $6$ & \\
\hline
$\mathbf{D}_4$ & $6$ & [\textbf{This paper}] \\
\hline
$2 \mathbf{A}_1 + \mathbf{A}_3$ & $5$ & \\
\hline
$2 \mathbf{A}_2 + \mathbf{A}_1$ & $5$ & \cite{2A2+A1}  \\
\hline
$\mathbf{A}_4 + \mathbf{A}_1$ & $4$ &  \\
\hline
$\mathbf{A}_5$ & $3$ & \\
\hline
$\mathbf{D}_5$ & $3$ & \cite{MR2520769}  \\
\hline
$3 \mathbf{A}_2$ & $3$ & \cite{MR1620682}  \\
\hline
$\mathbf{A}_5 + \mathbf{A}_1$ & $2$ & \cite{A5+A1}  \\
\hline
$\mathbf{E}_6$ & $1$ & \cite{MR2332351}  \\
\hline
\end{tabular}
\end{center}
\vspace{5pt}
\begin{center}
\textsc{Table} $1$. Cubic surfaces with $\mathbf{ADE}$ singularities.
\end{center}
\vspace{10pt}

At the American Institute of Mathematics workshop \textit{Rational and integral points on higher-dimensional varieties} in $2002$, Tschinkel posed the problem of studying the quantity $N_{U,H}(B)$. Motivated by the work of Heath-Brown \cite{MR2075628} dealing with Cayley's cubic surface, Browning \cite{MR2250046} brought a first answer to this question by proving that
\begin{eqnarray*}
N_{U,H}(B) & \asymp & B \log(B)^6 \textrm{,}
\end{eqnarray*}
where $\asymp$ means that the ratio of these two quantities is between two positive constants. To do so, he made use of the universal torsor which had been calculated by Hassett and Tschinkel \cite{MR2029868} and which is an open subset of the affine hypersurface embedded in
$\mathbb{A}^{10} \simeq \Spec \left( \mathbb{Q}[\eta_1, \dots, \eta_{10}] \right)$ and defined by
\begin{eqnarray*}
\eta_2 \eta_5^2 \eta_8 + \eta_3 \eta_6^2 \eta_9 + \eta_4 \eta_7^2 \eta_{10} - \eta_1 \eta_2 \eta_3 \eta_4 \eta_5 \eta_6 \eta_7 & = & 0 \textrm{.}
\end{eqnarray*}
In this paper, we also make use of this auxiliary variety to establish Manin's conjecture for $V$.

Let us note here that it is the first time that Manin's conjecture is proved for a del Pezzo surface for which the corresponding universal torsor is defined by the vanishing of a sum of four monomials (universal torsors are usually defined by the vanishing of a sum of three monomials).

Universal torsors have originally been introduced by Colliot-Thélène and Sansuc in order to study the Hasse principle and weak approximation for rational varieties (see \cite{MR0414556}, \cite{MR605344} and \cite{MR899402}). These descent methods have turned out to be a very pertinent tool for counting problems. The parametrizations of rational points provided by universal torsors have been used in the context of Manin's conjecture for the first time by Peyre \cite{MR1679842} and Salberger \cite{MR1679841}.

It is a well-established heuristic fact that counting rational points on cubic surfaces becomes harder as the number $N$ of $(-2)$-curves on the minimal desingularizations decreases (which means as we go higher in Table $1$). As a consequence, our result can be seen as a new record since $V$ is the first example of cubic surface with $N=4$ for which Manin's conjecture is proved. Previously, Manin's conjecture was known for only two non-toric cubic surfaces with $N=6$ (see \cite{MR2332351} and \cite{A5+A1}) and two cubic surfaces with $N=5$ (see \cite{MR2520769} and \cite{2A2+A1}).

Since the parametrizations of the rational points resorting to universal torsors become extremely complicated as $N$ decreases, it seems to the author that establishing Manin's conjecture for a cubic surface with $1 \leq N \leq 3$, and even for another cubic surface with $N = 4$, is an extremely difficult problem. In particular, all such surfaces have universal torsors which are not hypersurfaces. Actually, it is not even clear if sharp upper bounds for $N_{U,H}(B)$ can be obtained for surfaces with $1 \leq N \leq 3$. As a reminder, the best result known for non-singular cubic surfaces (id est with $N=0$) is the upper bound
\begin{eqnarray*}
N_{U,H}(B) & \ll & B^{4/3 + \varepsilon} \textrm{,}
\end{eqnarray*}
for any fixed $\varepsilon > 0$, which holds if the surface contains three coplanar lines defined over $\mathbb{Q}$ (see \cite{MR1438113}).

To prove Manin's conjecture for $V$, we start by establishing estimates for the number of $(u,v) \in \mathbb{Z}^2$ lying in a prescribed region and satisfying the congruence
\begin{eqnarray}
\label{cong}
a_1 u + a_2 v & \equiv & b \imod{q} \textrm{,}
\end{eqnarray}
and the condition $\gcd(uv,q)=1$, where $a_1, a_2 \in \mathbb{Z}_{\neq 0}$, $ q \in \mathbb{Z}_{\geq 1}$ are such that $a_1 a_2$ is coprime to $q$ and $b \in \mathbb{Z}$ is divisible by each prime number dividing $q$. Then, the first step of the proof consists in summing over three variables viewing the torsor equation as an affine congruence to which these estimates are applied.

At this stage of the proof, a very interesting phenomenon has to be noticed. The error term showing up in these estimates gives birth to a new congruence where the coefficients $a_1$ and $a_2$ appear. However, it is not possible to give a good bound for this quantity for any fixed $a_1$ and $a_2$ coprime to $q$. As a consequence, this quantity has to be estimated on average over certain variables dividing $a_1$ and $a_2$. More precisely, this error term is non-trivially summed over two other variables whose squares respectively divide $a_1$ and $a_2$ using a result  due to Heath-Brown and coming from the geometry of numbers.

The step which makes appear this new congruence is definitely the key step of our proof (see lemma \ref{Ramanujan lemma}). Our method is believed to be quite new and will certainly be useful to deal with other diophantine problems. For instance, the methods of lemmas~\ref{Ramanujan lemma} and~\ref{error} are used in forthcoming work of la Bretèche and Browning \cite{Hasse-Chatelet}, in which they study in a quantitative way the failure of the Hasse principle for a certain family of Châtelet surfaces.

It is worth pointing out that it is very likely that our work can be adapted to yield a proof of Manin's conjecture for another cubic surface with a single singularity of type $\mathbf{D}_4$ but lying in the other isomorphism class over $\overline{\mathbb{Q}}$ (there are exactly two isomorphism classes of cubic surfaces with $\mathbf{D}_4$ singularity type over $\overline{\mathbb{Q}}$). This cubic surface is defined over $\mathbb{Q}$ by
\begin{eqnarray*}
x_0(x_1+x_2+x_3)^2 + x_1 (x_1 + x_2) & = & 0 \textrm{,}
\end{eqnarray*}
and the universal torsor corresponding to this problem is an open subset of the affine hypersurface embedded in
$\mathbb{A}^{10} \simeq \Spec \left( \mathbb{Q}[\eta_1, \dots, \eta_{10}] \right)$ and defined by
\begin{eqnarray*}
\eta_2 \eta_5^2 \eta_8 + \eta_3 \eta_6^2 \eta_9 + \eta_4 \eta_7^2 \eta_{10} & = & 0 \textrm{.}
\end{eqnarray*}
The study of the congruence \eqref{cong} in the particular case $b = 0$ is expected to solve the problem of proving Manin's conjecture for this surface in a similar fashion.

Our main result is the following.

\begin{theorem}
\label{Manin}
As $B$ tends to $+ \infty$, we have the estimate
\begin{eqnarray*}
N_{U,H}(B) & = & c_{V,H} B \log(B)^{6} \left( 1 + O \left( \frac1{\log(\log(B))^{1/6}} \right) \right) \textrm{,}
\end{eqnarray*}
where $c_{V,H}$ agrees with Peyre's prediction.
\end{theorem}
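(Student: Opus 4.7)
The plan is to lift the counting problem to the universal torsor of Hassett--Tschinkel and reduce it, after parametrization, to counting integer points $(\eta_1,\dots,\eta_{10})\in\mathbb{Z}^{10}$ subject to the hypersurface equation
\begin{equation*}
\eta_2\eta_5^2\eta_8+\eta_3\eta_6^2\eta_9+\eta_4\eta_7^2\eta_{10}=\eta_1\eta_2\eta_3\eta_4\eta_5\eta_6\eta_7,
\end{equation*}
to the appropriate coprimality conditions coming from the Cox ring, and to polynomial inequalities in the $\eta_i$ translating $H(x)\le B$. Any rational point of $U$ with height at most $B$ corresponds, up to sign symmetries, to a unique such integer point, so the counting function decomposes as a sum over $(\eta_1,\dots,\eta_7)$ of the number of admissible $(\eta_8,\eta_9,\eta_{10})$.

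The first substantive step is to view, for each choice of $\eta_1,\dots,\eta_7$, the torsor equation as an affine congruence modulo $q=\eta_4\eta_7^2$, namely
\begin{equation*}
\eta_2\eta_5^2\,\eta_8+\eta_3\eta_6^2\,\eta_9\equiv \eta_1\eta_2\eta_3\eta_4\eta_5\eta_6\eta_7\pmod{q},
\end{equation*}
in the unknowns $\eta_8,\eta_9$; once these are pinned down, $\eta_{10}$ is forced by the equation. Applying the estimates announced in the introduction for \eqref{cong} with $a_1=\eta_2\eta_5^2$, $a_2=\eta_3\eta_6^2$ and $b=\eta_1\eta_2\eta_3\eta_4\eta_5\eta_6\eta_7$ (which is indeed divisible by every prime dividing $q$ thanks to the coprimality structure of the torsor), we expect a main term proportional to $q^{-1}$ times the area of the $(\eta_8,\eta_9)$-rectangle, together with an error term encoded by a \emph{secondary} affine congruence in which $a_1$ and $a_2$ reappear.

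The crux is then the treatment of this secondary congruence, which is exactly the content of Lemma~\ref{Ramanujan lemma}. The point is that no individual bound for the secondary congruence, uniform in $\eta_5$ and $\eta_6$, suffices; instead one must sum it on average over $\eta_5$ and $\eta_6$, whose squares appear in the coefficients, and this is achieved by appealing to Heath-Brown's geometry-of-numbers input for lattice point counts in thin regions. The hard part will be to carry out this averaging so that the resulting contribution is strictly dominated by the main term by a quantitative factor (of logarithmic size), which is what ultimately powers the $\log(\log(B))^{-1/6}$ saving displayed in the theorem.

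Once the error term is under control, the remaining task is to sum the main term over $\eta_1,\dots,\eta_7$. Möbius inversion is used to remove the coprimality conditions, producing an arithmetic factor which one expects to factor as an Euler product of local densities, while the geometric part yields a real integral over the torsor region that, after rescaling by $B$, tends to an archimedean density. Standard manipulations then identify the product of these local factors with Peyre's Tamagawa-type constant, using the known $\mathbf{D}_4$ configuration of $(-2)$-curves and $(-1)$-curves on the minimal desingularization to compute the $\alpha$- and $\beta$-invariants and to confirm that the exponent of $\log(B)$ equals $\rank\Pic-1=6$. The principal obstacle throughout remains the averaging argument of Lemma~\ref{Ramanujan lemma}: every other step is either a bookkeeping exercise on the torsor or a comparison with Peyre's formalism.
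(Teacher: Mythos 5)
Your outline does follow the same route as the paper: pass to the Hassett--Tschinkel torsor, read the torsor equation as the congruence \eqref{cong} modulo $q=\eta_4\eta_7^2$ with $a_1=\eta_2\eta_5^2$, $a_2=\eta_3\eta_6^2$, $b=\eta_1\eta_2\eta_3\eta_4\eta_5\eta_6\eta_7$, compare with the equidistributed count, and sum the resulting error term on average over $\eta_5,\eta_6$ (whose squares divide the coefficients) using Heath-Brown's geometry-of-numbers lemma, before summing the main term and matching Peyre's constant. However, there is a genuine gap: the congruence estimates (lemmas \ref{lemma affine} and \ref{lemma affine final}) only beat the trivial bound when $T=\eta_1\eta_2\cdots\eta_7$ is of comparable size to $X=B/\eta_1^2\eta_2\eta_3\eta_4$, since the error carries a factor $X^3/(LTA_1A_2\varphi(q))$ against a main term of size $X^{2/3}T^{4/3}/(A_1A_2 q)$. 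One must therefore first prove that the contribution of the region where $\eta_1^3\eta_2^2\eta_3^2\eta_4^2\eta_5\eta_6\eta_7\leq B/\log(\log(B))$, and where $\eta_1\sq(\eta_2\eta_3\eta_4)\leq B^{15/\log(\log(B))}$, is negligible; in the paper this is done by re-running Browning's dyadic upper-bound analysis (lemmas \ref{lemmaloglog1} and \ref{lemmaloglog2}), and it is not a bookkeeping step. Your proposal contains no such restriction of the domain, and without it the application of the affine-congruence machinery does not produce an asymptotic formula at all.

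Relatedly, you misattribute the source of the quantitative saving: the factor $\log(\log(B))^{-1/6}$ in the theorem comes precisely from the cost of lemma \ref{lemmaloglog2} (the restriction $\eqref{loglog}$), not from the averaging of the secondary congruence, whose total contribution after lemma \ref{error} is far smaller (of order $B\log(B)^5\log(\log(B))^{7/3}$ and $B\mathcal{Z}^{-1}$). Moreover, that averaging is only summable because of the complementary condition \eqref{log}: the divisor-type losses $\tau(q)^2 2^{\omega(q)}$ and the $\log(B)^6$ coming from the choice $L=\log(B)$ are absorbed by the extra decay in $\eta_1$ furnished by $\eta_1\sq(\eta_2\eta_3\eta_4)\geq B^{15/\log(\log(B))}$ (together with truncating the M\"obius variables $k_8,k_9$ at $\mathcal{Z}^3$). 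A further small but necessary point is the symmetry reduction to $\eta_4\eta_7^2\leq\eta_2\eta_5^2,\eta_3\eta_6^2$ (lemma \ref{equal}), which ensures the modulus $q$ is the smallest of the three and costs only the factor $3$ in lemma \ref{N(B)}. So the strategy is the right one, but the steps you dismiss as routine are exactly where the stated error term is won or lost.
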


It has been checked that $V$ is not an equivariant compactification of $\mathbb{G}_m^2$ or $\mathbb{G}_a^2$ (see \cite[Proposition $13$]{D-hyper} and
\cite{DL-equi}). Furthermore, let
\begin{eqnarray*}
G_d & = & \mathbb{G}_a \rtimes_d \mathbb{G}_m \textrm{,}
\end{eqnarray*}
where $d \in \mathbb{Z}$ and where the action of $g \in \mathbb{G}_m$ on $x \in \mathbb{G}_a$ is given by $g \cdot x = g^d x$. It can be checked that if $V$ were an equivariant compactification of $G_d$ then the number of negative curves on its minimal desingularization would be less or equal to $8$ which is not the case since this number is equal to $10$. As a result, theorem \ref{Manin} does not follow from the general results concerning equivariant compactifications of algebraic groups \cite{MR1620682}, \cite{MR1906155} and \cite{Semi-direct}.

The next section is dedicated to the proofs of several preliminary results. The two following sections are devoted to the respective descriptions of the universal torsor and of Peyre's constant. Finally, in the remaining section we prove theorem \ref{Manin}.

Along the proof, $\varepsilon$ is an arbitrarily small positive number. As a convention, the implicit constants involved in the notations $O$ and $\ll$ are always allowed to depend on $\varepsilon$.

It is a great pleasure for the author to thank his supervisor Professor de la Bretèche for his advice during the preparation of this work. The author is also grateful to Professor Browning for his careful reading of an earlier version of the manuscript.

This work has benefited from the financial support of the ANR PEPR (Points Entiers Points Rationnels).

\section{Preliminaries}

\subsection{Affine congruences}

Let $a_1, a_2 \in \mathbb{Z}_{\neq 0}$ be two integers and set $\mathbf{a} = \left(a_1,a_2\right)$. Let also $ q \in \mathbb{Z}_{\geq 1}$ and $b \in \mathbb{Z}$. We assume that $a_1 a_2$ is coprime to $q$. Moreover, if we let $\rad(n)$ denote the radical of an integer $n \geq 1$, that is to say
\begin{eqnarray*}
\rad(n) & = & \prod_{p \mid n} p \textrm{,}
\end{eqnarray*}
then we also assume that
\begin{eqnarray}
\label{rad}
\rad(q) & \mid & b \textrm{.}
\end{eqnarray}
Let $\mathcal{I}$ and $\mathcal{J}$ be two ranges. We introduce the quantities
\begin{eqnarray}
\label{N}
\ \ \ \ \ N(\mathcal{I},\mathcal{J};q,\mathbf{a},b) & = & \# \left\{ (u,v) \in \mathcal{I} \times \mathcal{J} \cap \mathbb{Z}^2,
\begin{array}{l}
a_1 u + a_2 v \equiv b \imod{q} \\
\gcd(uv,q) = 1
\end{array}
\right\} \textrm{,}
\end{eqnarray}
and
\begin{eqnarray}
\label{Nast}
N^{\ast}(\mathcal{I},\mathcal{J}; q) & = & \frac1{\varphi(q)} \# \left\{(u,v) \in \mathcal{I} \times \mathcal{J} \cap \mathbb{Z}^2, \gcd(uv,q) = 1 \right\} \textrm{.}
\end{eqnarray}

It is immediate to check that one of the two conditions among $\gcd(u,q)=1$ and $\gcd(v,q)=1$ can be omitted in the definition of $N(\mathcal{I},\mathcal{J};q,\mathbf{a},b)$. Indeed, if we omit the condition $\gcd(u,q)=1$ then the conditions $\gcd(a_2,q)=1$ and $\gcd(v,q)=1$ together imply that we have $\gcd(a_1u-b,q)=1$. Thanks to the conditions \eqref{rad} and $\gcd(a_1,q)=1$, this latter condition is seen to be equivalent to $\gcd(u,q)=1$.

Note that $N^{\ast}(\mathcal{I},\mathcal{J}; q)$ is the average of $N(\mathcal{I},\mathcal{J};q,\mathbf{a},b)$ over $a_1$ or $a_2$ coprime to $q$. In lemma \ref{Ramanujan lemma}, we show how we can approximate $N(\mathcal{I},\mathcal{J};q,\mathbf{a},b)$ by $N^{\ast}(\mathcal{I},\mathcal{J}; q)$. We start by studying some exponential sums which will naturally appear in the proof of lemma \ref{Ramanujan lemma}. For $q \in \mathbb{Z}_{\geq 1}$, we let $e_q$ be the function defined by $e_q\left(x\right) = e^{2 i \pi x /q}$ and we set, for $r, s \in \mathbb{Z}$,
\begin{eqnarray*}
S_q(r,s,\mathbf{a},b) & = & \sum_{\substack{\alpha, \beta = 1 \\ \gcd\left(\alpha \beta, q\right) = 1 \\ a_1 \alpha + a_2 \beta \equiv b \imod{q}}}^q
e_q(r \alpha + s \beta)  \textrm{.}
\end{eqnarray*}
Furthermore, we need to introduce the classical Ramanujan sum. For $q \in \mathbb{Z}_{\geq 1}$ and $n \in \mathbb{Z}$, we set
\begin{eqnarray*}
c_q\left(n\right) & = & \sum_{\substack{\alpha = 1 \\ \gcd(\alpha, q) = 1}}^{q} e_{q}( n \alpha)  \textrm{,}
\end{eqnarray*}
and we recall that
\begin{eqnarray}
\label{Ramanujan}
c_q(n) & = & \sum_{d | \gcd(q,n)} \mu \left( \frac{q}{d} \right) d \textrm{.}
\end{eqnarray}

\begin{lemma}
\label{exponential lemma}
For any $r, s \in \mathbb{Z}$, we have
\begin{eqnarray*}
S_q(r,s,\mathbf{a},b) & = & e_q \left( r a_1^{-1} b \right) c_q(a_1 s - a_2 r) \textrm{,}
\end{eqnarray*}
and symmetrically,
\begin{eqnarray*}
S_q(r,s,\mathbf{a},b) & = & e_q \left( s a_2^{-1} b \right) c_q(a_2 r - a_1 s) \textrm{,}
\end{eqnarray*}
where $a_1^{-1}$ and $a_2^{-1}$ denote respectively the inverses of $a_1$ and $a_2$ modulo $q$.

As a result, we have $S_q(q,s,\mathbf{a},b) = c_q(s)$ and $S_q(r,q,\mathbf{a},b) = c_q(r)$ and thus these two quantities are independent of $\mathbf{a}$ and $b$.
\end{lemma}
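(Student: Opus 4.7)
The plan is to reduce the double sum defining $S_q(r,s,\mathbf{a},b)$ to a single sum over $\beta$ by using the congruence to solve for $\alpha$, and then to recognize the resulting sum as a Ramanujan sum after a change of variable.

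First, since $\gcd(a_1,q) = 1$, for each $\beta$ coprime to $q$ the congruence $a_1 \alpha + a_2 \beta \equiv b \imod{q}$ determines $\alpha$ uniquely modulo $q$, namely $\alpha \equiv a_1^{-1}(b - a_2 \beta) \imod{q}$. I would next check that the extra condition $\gcd(\alpha,q) = 1$ is then automatic: for any prime $p \mid q$, condition \eqref{rad} gives $b \equiv 0 \imod{p}$, so $a_1 \alpha \equiv -a_2 \beta \imod{p}$, and the right-hand side is coprime to $p$ since $\gcd(a_1 a_2 \beta, p) = 1$; hence $\gcd(\alpha, p) = 1$ for every $p \mid q$. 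This is exactly the observation already highlighted in the paragraph preceding the lemma.

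Substituting $\alpha \equiv a_1^{-1}(b - a_2 \beta) \imod{q}$ into the exponential and factoring out the term independent of $\beta$, I would obtain
\begin{eqnarray*}
S_q(r,s,\mathbf{a},b) & = & e_q(r a_1^{-1} b) \sum_{\substack{\beta = 1 \\ \gcd(\beta,q) = 1}}^{q} e_q\bigl((s - r a_1^{-1} a_2)\beta\bigr) \textrm{.}
\end{eqnarray*}
Writing $s - r a_1^{-1} a_2 \equiv a_1^{-1}(a_1 s - a_2 r) \imod{q}$ and performing the change of variable $\beta \mapsto a_1 \beta$, which permutes the reduced residues modulo $q$ since $\gcd(a_1,q) = 1$, I would recognize the remaining sum as $c_q(a_1 s - a_2 r)$. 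This gives the first identity, and the second is obtained by the symmetric argument, parametrizing over $\alpha$ instead of $\beta$.

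For the final assertion, taking $r = q$ makes $e_q(r a_1^{-1} b) = 1$ and yields $c_q(a_1 s - a_2 q) = c_q(a_1 s)$ by periodicity of $c_q$; since $\gcd(a_1,q) = 1$ implies $\gcd(q, a_1 s) = \gcd(q,s)$, formula \eqref{Ramanujan} shows $c_q(a_1 s) = c_q(s)$, and $S_q(r,q,\mathbf{a},b) = c_q(r)$ follows symmetrically. The only mild subtlety in the whole argument is checking that the coprimality condition on $\alpha$ can be dropped, which is what allows the sum to decouple cleanly; beyond that, it is a direct computation.
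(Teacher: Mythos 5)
Your proposal is correct and follows essentially the same route as the paper's proof: you eliminate one variable from the congruence (verifying, just as the paper does, that one of the two coprimality conditions is redundant thanks to $\rad(q)\mid b$ and $\gcd(a_1a_2,q)=1$), factor out the exponential involving $b$, and recognize the remaining sum as a Ramanujan sum after the change of variable by a unit modulo $q$. The only cosmetic difference is that you establish the first identity directly by summing over $\beta$ and solving for $\alpha$, while the paper proves the second identity by summing over $\alpha$ and solving for $\beta$, the two being exchanged by the symmetry $(r,s,a_1,a_2)\mapsto(s,r,a_2,a_1)$; your extra details on the final assertion (periodicity of $c_q$ and $c_q(a_1s)=c_q(s)$) are a harmless elaboration of what the paper leaves implicit.
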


\begin{proof}
The symmetry given by the map $(r,s,a_1,a_2) \mapsto (s,r,a_2,a_1)$ implies that we only need to prove one of the two equalities. Let us prove the second one for instance. In a similar way as we can omit the condition $\gcd(v,q)=1$ in the definition of $N(\mathcal{I},\mathcal{J};q,\mathbf{a},b)$, we can also omit the condition $\gcd\left( \beta, q\right) = 1$ in the definition of $S_{q}(r,s,\mathbf{a},b)$. Therefore, we get
\begin{eqnarray*}
S_q(r,s,\mathbf{a},b) & = & \sum_{\substack{\alpha = 1 \\ \gcd\left(\alpha, q\right) = 1}}^q e_q \left(r \alpha \right)
\sum_{\substack{\beta = 1 \\ a_1 \alpha + a_2 \beta \equiv b \imod{q}}}^q e_q(s \beta) \\
& = & \sum_{\substack{\alpha = 1 \\ \gcd\left(\alpha, q\right) = 1}}^q e_q \left(r \alpha \right)
e_q \left( s \left( a_2^{-1} b - a_2^{-1} a_1 \alpha \right) \right) \\
& = & e_q \left( s a_2^{-1} b \right) \sum_{\substack{\alpha = 1 \\ \gcd\left(\alpha, q\right) = 1}}^q e_q \left( \left( r - a_2^{-1} a_1 s \right) \alpha \right) \\
& = & e_q \left( s a_2^{-1} b \right) c_q \left( r - a_2^{-1} a_1 s \right) \\
& = & e_q \left( s a_2^{-1} b \right) c_q \left( a_2 r - a_1 s \right) \textrm{,}
\end{eqnarray*}
as wished.
\end{proof}

From now on, for $\lambda > 0$, we define the arithmetic function $\sigma_{- \lambda}$ by
\begin{eqnarray*}
\sigma_{- \lambda}(n) & = & \sum_{k|n} k^{- \lambda} \textrm{.}
\end{eqnarray*}
We now prove the following lemma.

\begin{lemma}
\label{Ramanujan lemma}
We have the estimate
\begin{eqnarray*}
N(\mathcal{I}, \mathcal{J};q,\mathbf{a},b) - N^{\ast}(\mathcal{I}, \mathcal{J};q) & \ll & E(q,\mathbf{a})\textrm{,}
\end{eqnarray*}
where $E(q,\mathbf{a}) = E_0(q,\mathbf{a}) + E_1(q)$ and
\begin{eqnarray*}
E_0(q,\mathbf{a}) & = & \sum_{d|q} \left|  \mu \left( \frac{q}{d} \right)  \right| d \sum_{\substack{0 < |r|, |s| \leq q/2 \\ a_1 s - a_2 r \equiv 0 \imod{d}}} |r|^{-1} |s|^{-1} \textrm{,}
\end{eqnarray*}
and
\begin{eqnarray*}
E_1(q) & = & \left( \frac{q}{\varphi(q)} \right)^3 \log(q)^2 \textrm{.}
\end{eqnarray*}
\end{lemma}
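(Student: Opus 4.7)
The plan is to compare $N$ and $N^{\ast}$ via discrete Fourier analysis on $(\mathbb{Z}/q\mathbb{Z})^{2}$. Using the expansion
\begin{eqnarray*}
\mathbf{1}_{u \equiv \alpha \imod{q}} & = & \frac1{q} \sum_{r \pmod q} e_q(r(u-\alpha)),
\end{eqnarray*}
and grouping the pairs $(u,v) \in \mathcal{I} \times \mathcal{J}$ by their residues modulo $q$, we arrive at
\begin{eqnarray*}
N(\mathcal{I},\mathcal{J};q,\mathbf{a},b) & = & \frac1{q^2} \sum_{r,s \pmod q} \overline{S_q(r,s,\mathbf{a},b)} \, T_{\mathcal{I}}(r) \, T_{\mathcal{J}}(s), \\
N^{\ast}(\mathcal{I},\mathcal{J};q) & = & \frac1{q^2 \varphi(q)} \sum_{r,s \pmod q} c_q(r) \, c_q(s) \, T_{\mathcal{I}}(r) \, T_{\mathcal{J}}(s),
\end{eqnarray*}
where $T_{\mathcal{I}}(r) = \sum_{u \in \mathcal{I}} e_q(ru)$ and $T_{\mathcal{J}}$ is defined analogously. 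The formula for $N^{\ast}$ uses the well-known identity $\sum_{\gcd(\alpha,q)=1} e_q(-r\alpha) = c_q(r)$.

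The first key observation is that the contributions of the terms with $r \equiv 0 \imod{q}$ cancel between $N$ and $N^{\ast}$. Indeed, by Lemma~\ref{exponential lemma} and the fact that $c_q$ is real, the $N$-contribution equals $q^{-2} T_{\mathcal{I}}(0) \sum_{s} c_q(s) T_{\mathcal{J}}(s)$, while the $N^{\ast}$-contribution is $q^{-2} \varphi(q)^{-1} c_q(0) T_{\mathcal{I}}(0) \sum_{s} c_q(s) T_{\mathcal{J}}(s)$ and coincides with the former since $c_q(0) = \varphi(q)$. The same cancellation occurs when $s \equiv 0 \imod{q}$. Representing the nonzero residues by $0 < |r|, |s| \leq q/2$, one thus obtains
\begin{eqnarray*}
N - N^{\ast} & = & \frac1{q^2} \sum_{0 < |r|, |s| \leq q/2} \left( \overline{S_q(r,s,\mathbf{a},b)} - \frac{c_q(r) c_q(s)}{\varphi(q)} \right) T_{\mathcal{I}}(r) T_{\mathcal{J}}(s).
\end{eqnarray*}
We then bound the trigonometric intervals via the classical estimate $|T_{\mathcal{I}}(r)| \leq q/(2|r|)$ valid for $0 < |r| \leq q/2$, and split the sum into two parts using the triangle inequality.

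For the first part, Lemma~\ref{exponential lemma} gives $|S_q(r,s,\mathbf{a},b)| = |c_q(a_1 s - a_2 r)|$. Applying the Ramanujan formula \eqref{Ramanujan} expresses this as $\sum_{d \mid q,\, d \mid a_1 s - a_2 r} \mu(q/d) d$, and swapping the order of summation to put $d$ outside reproduces precisely $E_0(q,\mathbf{a})$ after the interval bounds cancel the $q^{-2}$ prefactor. For the second part, the sum factorises as $\varphi(q)^{-1} \bigl( \sum_{0 < |r| \leq q/2} |c_q(r)|/|r| \bigr)^{2}$, and classical bounds on $|c_q(r)|$ in terms of $\gcd(q,|r|)$ together with straightforward divisor estimates yield the contribution $E_1(q) = (q/\varphi(q))^{3} \log(q)^{2}$.

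The main obstacle is conceptual rather than technical. A crude pointwise estimate of $|c_q(a_1 s - a_2 r)|$ discards the dependence on $\mathbf{a}$ and leaves a bound too weak to handle later in the paper. The entire point of isolating $E_0(q,\mathbf{a})$ in the shape given is to keep the divisibility condition $d \mid a_1 s - a_2 r$ explicit, so that when one later sums over the parameters $a_1$ and $a_2$ (with $\square$-divisibility restrictions), this condition can be exploited via the geometry-of-numbers argument alluded to in the introduction. Ensuring that no absolute bound is taken prematurely is therefore the delicate step of the argument.
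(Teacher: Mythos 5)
Your proof is correct, and for the $E_0(q,\mathbf{a})$ part it coincides with the paper's: detect the congruence with additive characters, evaluate the complete sum via Lemma \ref{exponential lemma}, bound the interval sums by $q/(2|r|)$ and $q/(2|s|)$, and open $|c_q(a_1s-a_2r)|$ with \eqref{Ramanujan}. The genuine difference is in how $N^{\ast}$ is treated. The paper never expands $N^{\ast}$: it isolates the frequencies $r=q$ or $s=q$ as an auxiliary quantity $N(\mathcal{I},\mathcal{J};q)$, independent of $\mathbf{a}$ and $b$, proves $N-N(\mathcal{I},\mathcal{J};q)\ll E_0(q,\mathbf{a})$, and then reaches $N^{\ast}$ by averaging this estimate over $a_2$ coprime to $q$, since $N^{\ast}$ is exactly that average. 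You instead expand $N^{\ast}$ with coefficients $c_q(r)c_q(s)/\varphi(q)$, check the exact term-by-term cancellation of the frequencies $r\equiv0$ or $s\equiv0$ (via $c_q(0)=\varphi(q)$ and $S_q(q,s,\mathbf{a},b)=c_q(s)$), and bound the remaining $N^{\ast}$-part by $\varphi(q)^{-1}\bigl(\sum_{0<|r|\le q/2}|c_q(r)|/|r|\bigr)^2$; you should spell out the closing estimate, namely $|c_q(r)|\le\gcd(r,q)$ gives $\sum_{0<|r|\le q/2}|c_q(r)|/|r|\ll\tau(q)\log q$, and then $\tau(q)^2\le 4q$ and $\varphi(q)\le q$ give $\tau(q)^2\log(q)^2/\varphi(q)\ll E_1(q)$, this being the only place where the precise shape of $E_1$ enters. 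What your route buys is robustness: the nonzero-frequency part of $N^{\ast}$ is precisely the exact average over $a_2$ coprime to $q$ of the nonzero-frequency part of $N$, so bounding it directly avoids the counting of $a_2\imod{q}$ in residue classes modulo $d\mid q$ carried out in the paper's estimate of $E_1'(q)$ --- a count which in fact carries a factor $\varphi(q)/\varphi(d)$ not visible in the printed bound $\ll\gcd(r,s,d)$, so that averaging the absolute-value bound $E_0$ would pick up an extra factor of roughly $2^{\omega(q)}$; your direct computation delivers the stated $E_1(q)$ without this loss. What the paper's route buys is economy: no second Fourier expansion and no separate estimate for $\sum_{0<|r|\le q/2}|c_q(r)|/|r|$, since it recycles the already established $E_0$-bound.
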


\begin{proof}
We detect the congruence using sums of exponentials, we get
\begin{eqnarray*}
N(\mathcal{I},\mathcal{J};q,\mathbf{a},b) & = & \sum_{\substack{\alpha,\beta = 1 \\ \gcd(\alpha \beta,q) = 1 \\
a_1 \alpha + a_2 \beta \equiv b \imod{q}}}^q
\# \{ (u,v) \in \mathcal{I} \times \mathcal{J} \cap \mathbb{Z}^2, q| \alpha - u, \beta - v \} \\
& = & \! \! \! \! \! \! \sum_{\substack{\alpha,\beta = 1 \\ \gcd(\alpha \beta,q) = 1 \\ a_1 \alpha + a_2 \beta \equiv b \imod{q}}}^q \! \! \! \! \! \frac1{q^2}
\left( \sum_{u \in \mathcal{I}} \sum_{r=1}^q e_q(r \alpha - r u) \right)
\left( \sum_{v \in \mathcal{J}} \sum_{s=1}^q e_q(s \beta - s v) \right) \\
& = & \frac1{q^2} \sum_{r,s = 1}^q S_q(r,s,\mathbf{a},b)  F_q(r,s) \textrm{,}
\end{eqnarray*}
where
\begin{eqnarray*}
F_q(r,s) & = & \left( \sum_{u \in \mathcal{I}} e_q(- r u) \right) \left( \sum_{v \in \mathcal{J}} e_q(- s v) \right)
\textrm{.}
\end{eqnarray*}
Using lemma \ref{exponential lemma}, we get
\begin{eqnarray*}
N(\mathcal{I},\mathcal{J};q,\mathbf{a},b) & = &\frac1{q^2} \sum_{r,s = 1}^q e_q \left( r a_1^{-1} b \right) c_q(a_1 s - a_2 r) F_q(r,s) \textrm{.}
\end{eqnarray*}
Let $||x||$ denote the distance from $x$ to the set of integers. If $r,s \neq q$, $F_q(r,s)$ is a product of two geometric sums and we therefore have
\begin{eqnarray*}
F_q(r,s) & \ll & \left| \left| \frac{r}{q} \right| \right| ^{-1} \left| \left| \frac{s}{q} \right| \right| ^{-1} \textrm{.}
\end{eqnarray*}
Let $N(\mathcal{I}, \mathcal{J};q)$ be the sum of the terms corresponding to $r=q$ or $s=q$. As stated in lemma \ref{exponential lemma},
$N(\mathcal{I}, \mathcal{J};q)$ is independent of $a_1$, $a_2$ and $b$. Using the equality \eqref{Ramanujan}, we get
\begin{eqnarray*}
N(\mathcal{I},\mathcal{J};q,\mathbf{a},b) - N(\mathcal{I},\mathcal{J};q) & = & \frac1{q^2} \sum_{r,s = 1}^{q-1} e_q \left( r a_1^{-1} b \right) c_q(a_1 s - a_2 r) F_q(r,s) \\
& \ll & \frac1{q^2} \sum_{d|q}  \left|  \mu \left( \frac{q}{d} \right)  \right| d \sum_{\substack{r,s = 1 \\ a_1 s - a_2 r \equiv 0 \imod{d}}}^{q-1}
\left| \left| \frac{r}{q} \right| \right| ^{-1} \left| \left| \frac{s}{q} \right| \right| ^{-1} \\
& \ll & \frac1{q^2} \sum_{d|q}  \left|  \mu \left( \frac{q}{d} \right)  \right| d \sum_{\substack{0 < |r|, |s| \leq q/2 \\ a_1 s - a_2 r \equiv 0 \imod{d}}} \frac{q}{|r|} \frac{q}{|s|} \textrm{.}
\end{eqnarray*}
Recall that the right-hand side is equal to $E_0(q,\mathbf{a})$. We have thus obtained
\begin{eqnarray}
\label{estimate 1}
N(\mathcal{I},\mathcal{J};q,\mathbf{a},b) - N(\mathcal{I},\mathcal{J};q) & \ll & E_0(q,\mathbf{a}) \textrm{.}
\end{eqnarray}
Since $N(\mathcal{I}, \mathcal{J};q)$ is independent of $a_2$ and since $N^{\ast}(\mathcal{I},\mathcal{J};q)$ is the average of
$N(\mathcal{I},\mathcal{J};q,\mathbf{a},b)$ over $a_2$ coprime to $q$, averaging this estimate over $a_2$ coprime to $q$ shows that
\begin{eqnarray*}
N^{\ast}(\mathcal{I},\mathcal{J};q) - N(\mathcal{I},\mathcal{J};q) & \ll & E_1'(q) \textrm{,}
\end{eqnarray*}
where
\begin{eqnarray*}
E_1'(q)  & = & \frac1{\varphi(q)} \sum_{d \mid q} d \sum_{0 < |r|, |s| \leq q/2} |r|^{-1} |s|^{-1}
\sum_{\substack{a_2 = 1 \\ \gcd(a_2,q)=1 \\ a_1s-a_2r \equiv 0 \imod{d}}}^q 1 \\
& \ll & \frac1{\varphi(q)} \sum_{d \mid q} d \sum_{0 < |r|, |s| \leq q/2} \gcd(r,s,d) |r|^{-1} |s|^{-1}  \\
& \ll & \frac1{\varphi(q)} \sum_{d \mid q} d \sum_{d' \mid d} d' \sum_{\substack{0 < |r|, |s| \leq q/2 \\ d' \mid r, d' \mid s}} |r|^{-1} |s|^{-1} \\
& \ll & \frac1{\varphi(q)} \log(q)^2 \sum_{d \mid q} d \sigma_{-1}(d) \textrm{.}
\end{eqnarray*}
Furthermore, we can check that the right-hand side is bounded by $E_1(q)$. Thus
\begin{eqnarray}
\label{estimate 2}
N^{\ast}(\mathcal{I},\mathcal{J};q) - N(\mathcal{I},\mathcal{J};q) & \ll & E_1(q) \textrm{,}
\end{eqnarray}
and therefore, combining the estimates \eqref{estimate 1} and \eqref{estimate 2}, we obtain
\begin{eqnarray*}
N(\mathcal{I},\mathcal{J};q,\mathbf{a},b) - N^{\ast}(\mathcal{I},\mathcal{J};q) & \ll & E(q,\mathbf{a}) \textrm{,}
\end{eqnarray*}
which completes the proof.
\end{proof}

Note that an immediate consequence of lemma \ref{Ramanujan lemma} is the bound
\begin{eqnarray}
\label{average bound}
N(\mathcal{I}, \mathcal{J};q,\mathbf{a},b) & \ll & \frac1{\varphi(q)} \# \left( \mathcal{I} \times \mathcal{J} \cap \mathbb{Z}^2 \right) + E(q,\mathbf{a}) \textrm{.}
\end{eqnarray}

We now introduce a certain domain $\mathcal{S} \subset \mathbb{R}^2$ where the couple $(u,v)$ is restricted to lie. Let $X, T, A_1, A_2 \geq 1$. We let
$\mathcal{S} = \mathcal{S}(X, T, A_1, A_2)$ be the set of $(x,y) \in \mathbb{R}^2$ such that
\begin{eqnarray}
\label{A}
A_1 |x| A_2 |y| |A_1 x + A_2 y - T| & \leq & T^2 X \textrm{,} \\
\label{B}
|A_1 x + A_2 y - T| & \leq & X \textrm{,} \\
\label{C}
A_1 |x| & \leq & X \textrm{,} \\
\label{D}
A_2 |y| & \leq & X \textrm{.}
\end{eqnarray}
Note that the last three conditions imply that we also have
\begin{eqnarray*}
T & \leq & 3 X \textrm{.}
\end{eqnarray*}
Finally, we set
\begin{eqnarray*}
D(\mathcal{S};q,\mathbf{a},b) & = & \# \left\{ (u,v) \in \mathcal{S} \cap \mathbb{Z}_{\neq 0}^2,
\begin{array}{l}
a_1 u + a_2 v \equiv b \imod{q} \\
\gcd(uv,q) = 1
\end{array}
\right\} \textrm{,}
\end{eqnarray*}
and
\begin{eqnarray*}
D^{\ast}(\mathcal{S};q) & = & \frac1{\varphi(q)} \# \left\{ (u,v) \in \mathcal{S} \cap \mathbb{Z}_{\neq 0}^2, \gcd(uv,q) = 1 \right\} \textrm{.}
\end{eqnarray*}

We now aim to prove the following lemma.

\begin{lemma}
\label{lemma affine}
Let $L \geq 1$. We have the estimate
\begin{eqnarray*}
D(\mathcal{S};q,\mathbf{a},b) - D^{\ast}(\mathcal{S};q) & \ll & \frac1{L} \frac{X^3}{T A_1 A_2 \varphi(q)} + L^4 \log(2X)^2 E(q,\mathbf{a}) \textrm{.}
\end{eqnarray*}
\end{lemma}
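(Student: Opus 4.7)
The plan is to cover $\mathcal{S}$ by axis-aligned rectangles and reduce to Lemma \ref{Ramanujan lemma} applied on each. To keep the boundary contribution under control uniformly near the coordinate axes, I would first perform a dyadic decomposition: split $\mathcal{S} \cap \mathbb{Z}_{\neq 0}^2$ according to $\epsilon_1 u \in (U_0/2, U_0]$ and $\epsilon_2 v \in (V_0/2, V_0]$, where $\epsilon \in \{\pm 1\}^2$ and $U_0, V_0$ range over dyadic values up to $X/A_1$ and $X/A_2$. The case $q = 1$ is trivial, so one may assume $q \geq 2$, in which case $\gcd(uv, q) = 1$ forces $|u|, |v| \geq 1$, yielding only $O(\log(2X)^2)$ dyadic regions. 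Inside each region I further subdivide into an $L \times L$ grid of subrectangles $R = I \times J$ of sides $\asymp U_0/L$ and $\asymp V_0/L$.

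\textbf{Interior and boundary contributions.} Call a grid subrectangle $R$ \emph{interior} if $R \subset \mathcal{S}$ and \emph{boundary} if $R$ meets both $\mathcal{S}$ and its complement. For each interior $R$, Lemma \ref{Ramanujan lemma} gives $N(I, J; q, \mathbf{a}, b) - N^*(I, J; q) \ll E(q, \mathbf{a})$; summing over all $O(L^2 \log(2X)^2)$ interior rectangles produces an error $O(L^2 \log(2X)^2 E(q, \mathbf{a}))$, which is absorbed into the $L^4 \log(2X)^2 E(q, \mathbf{a})$ term of the claim. For each boundary $R$, the contributions from $R \cap \mathcal{S}$ to $D(\mathcal{S};q,\mathbf{a},b)$ and to $D^*(\mathcal{S};q)$ are both bounded by $O\bigl(\#(R \cap \mathbb{Z}^2)/\varphi(q) + E(q, \mathbf{a})\bigr) = O(U_0 V_0/(L^2 \varphi(q)) + E(q, \mathbf{a}))$ --- the first via \eqref{average bound} applied to the enclosing rectangle $R$, the second trivially --- and so the same bound controls their difference.

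\textbf{Counting and summing boundary rectangles.} The geometric heart of the proof is showing that each dyadic region contains only $O(L)$ boundary rectangles. The boundary $\partial \mathcal{S}$ is the union of four algebraic arcs coming from \eqref{A}--\eqref{D}: the first is cubic in $(x,y)$, the others linear. Since these defining polynomials have bounded degree, each meets any horizontal or vertical line in $O(1)$ points, hence crosses the $L+1$ horizontal grid lines of a dyadic region in $O(L)$ points altogether, and similarly for the vertical ones; a standard incidence argument then yields $O(L)$ boundary cells per region. Summing the above per-cell bound over the $O(L)$ boundary rectangles of each dyadic region, and then over all regions, yields total boundary contribution $O\bigl(\sum_{(U_0, V_0)} U_0 V_0/(L \varphi(q)) + L \log(2X)^2 E(q, \mathbf{a})\bigr)$. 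The geometric series in the dyadic decomposition gives $\sum_{(U_0, V_0)} U_0 V_0 \ll X^2/(A_1 A_2)$, and since $T \leq 3X$ (as noted after condition \eqref{D}) this is $\ll X^3/(T A_1 A_2)$.

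\textbf{Conclusion and main obstacle.} Combining the interior contribution $O(L^2 \log(2X)^2 E(q, \mathbf{a})) \leq O(L^4 \log(2X)^2 E(q, \mathbf{a}))$ with the boundary contribution $O\bigl(X^3/(L T A_1 A_2 \varphi(q)) + L \log(2X)^2 E(q, \mathbf{a})\bigr)$ yields the announced bound. The principal obstacle is the geometric step in the third paragraph: whereas \eqref{B}--\eqref{D} contribute only linear boundary segments which are easy to handle, the curved boundary coming from the cubic inequality \eqref{A} must be shown not to oscillate too wildly through the grid, and this is precisely what the bounded algebraic degree of its defining polynomial guarantees through its $O(1)$ intersections with each axis-aligned line.
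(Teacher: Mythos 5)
Your outline (cover $\mathcal{S}$ by rectangles, apply Lemma \ref{Ramanujan lemma} on the interior ones, control the boundary ones separately) is the same as the paper's, but your treatment of the boundary cells contains a genuine gap. You bound the lattice points in a boundary cell $R$ by $O(U_0V_0/L^2)$, i.e.\ by its area. The correct count is $O\bigl((U_0/L+1)(V_0/L+1)\bigr)$, and the neglected terms matter precisely when a cell has a side of length $<1$. Carrying them through, the $O(L)$ boundary cells of a dyadic block contribute up to $U_0V_0/L + U_0 + V_0 + L$ lattice points, and after summing over blocks and dividing by $\varphi(q)$ you are left with the extra quantity
\begin{eqnarray*}
\left( \frac{X}{A_1} + \frac{X}{A_2} \right) \frac{\log(2X)}{\varphi(q)} \textrm{.}
\end{eqnarray*}
This is not dominated by the right-hand side of the lemma: take $A_1 = 1$, $A_2 \asymp T \asymp X$, $q=2$ and $L = \log(2X)$; then the claimed bound is $\asymp X/\log(2X) + \log(2X)^6$, while your extra term is $X\log(2X)$. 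Note that your count cannot be repaired just by bookkeeping, because it produces the \emph{sum} $X/A_1 + X/A_2$, whereas only something like $\min(X/A_1, X/A_2) \leq X/(A_1A_2)^{1/2}$ is actually absorbable (via the inequality $T \leq 3X$ and the fact that for $q \geq 2$ one has $E(q,\mathbf{a}) \gg 1$, so that the geometric mean of the two terms on the right-hand side dominates $X/(A_1^{1/2}A_2^{1/2}\varphi(q))$).

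The paper's proof differs exactly at this point. It also decomposes into multiplicatively spaced rectangles (of ratio $1+\delta$ with $\delta \asymp 1/L^2$, which is why it ends up with $L^4$ rather than your $L^2$ in the exponential-sum term), but instead of counting boundary \emph{cells} it characterizes the lattice points lying in boundary rectangles as integer points in a $\Delta$-thickened neighbourhood of the curves \eqref{A}--\eqref{D}, and for the cubic condition \eqref{A} it counts these for each fixed $v$ using Lemma \ref{square}. That lemma yields $\ll \Delta^{1/2}(\cdots) + 1$ admissible $u$ per integer $v$; the summed ``$+1$'' gives $X/A_2$ with no factor of $L$ or $\log$, and exchanging the roles of $u$ and $v$ gives the minimum $X/(A_1^{1/2}A_2^{1/2})$, which is then absorbed as above. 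Your observation that the boundary arcs have bounded degree and hence cross each grid line $O(1)$ times correctly bounds the \emph{number} of boundary cells by $O(L)$, but it says nothing about how the integer points distribute among those cells; some analogue of Lemma \ref{square} (or at least a fixed-$v$ slicing argument giving $O(1)$ boundary $u$-intervals per integer $v$, together with the symmetrization in $u \leftrightarrow v$) is needed to close the proof.
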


The proof of lemma \ref{lemma affine} requires a technical result similar to \cite[Lemma $4$]{MR2853047}. The analysis of the proof of \cite[Lemma $4$]{MR2853047} immediately shows that the following lemma holds.

\begin{lemma}
\label{square}
Let $0 < \nu \leq 1$ and $M_0 \in \mathbb{R}_{>0}$. Let $Y  \in \mathbb{R}_{>0}$ and $A, Y' \in \mathbb{R}$ be such that $0 < Y - Y' \ll \nu M_0^2$ and set
$M = \max \left( |A|, Y^{1/2} \right)$. Let $\mathcal{R} \subset \mathbb{R}$ be the set of real numbers $y$ subject to
\begin{eqnarray}
\label{double inequality}
& & Y' < \left| y^2 + 2 A y \right| \leq Y \textrm{.}
\end{eqnarray}
We have the bound
\begin{eqnarray*}
\# \left( \mathcal{R} \cap \mathbb{Z} \right) & \ll &  \nu \frac{M_0^2}{M} + \nu^{1/2} M_0 + 1 \textrm{.}
\end{eqnarray*}
In particular, if $M_0 \geq M$ then we have the bound
\begin{eqnarray*}
\# \left( \mathcal{R} \cap \mathbb{Z} \right) & \ll &  \nu^{1/2} \frac{M_0^2}{M} + 1  \textrm{.}
\end{eqnarray*}
\end{lemma}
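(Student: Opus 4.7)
The plan is to translate the variable by setting $z = y + A$, which turns $y^2 + 2Ay$ into $z^2 - A^2$. The problem then reduces to counting points of the coset $\mathbb{Z} + A$ lying in $\mathcal{R}' = \{z \in \mathbb{R}, \, Y' < |z^2 - A^2| \leq Y\}$; since the usual ``length $+ 1$'' bound for integer points in an interval still applies to any coset of $\mathbb{Z}$, the count of $y \in \mathcal{R} \cap \mathbb{Z}$ is controlled by the total measure and the number of connected components of $\mathcal{R}'$.

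Next I would describe $\mathcal{R}'$ explicitly by splitting according to the sign of $z^2 - A^2$. The positive branch $z^2 > A^2$ enforces $A^2 + \max(Y',0) < z^2 \leq A^2 + Y$, a union of at most two symmetric intervals each of length $\sqrt{A^2+Y} - \sqrt{A^2+\max(Y',0)}$. The negative branch $z^2 < A^2$ is non-empty only when $A^2 > \max(Y',0)$, giving $\max(A^2-Y,0) \leq z^2 < A^2 - \max(Y',0)$ and contributing at most two intervals of total length $2\bigl(\sqrt{A^2 - \max(Y',0)} - \sqrt{\max(A^2-Y,0)}\bigr)$. Altogether, $\mathcal{R}'$ is a disjoint union of at most four intervals.

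The main computation is then to bound each of these interval lengths using the two elementary inequalities $\sqrt{u} - \sqrt{v} \leq (u-v)/\sqrt{u}$ and $\sqrt{u} - \sqrt{v} \leq \sqrt{u-v}$, both valid for $u \geq v \geq 0$. On the positive branch, the first inequality applied to $u = A^2 + Y$ yields length $\leq (Y-Y')/\sqrt{A^2+Y} \ll \nu M_0^2/M$ via the hypotheses $Y - Y' \ll \nu M_0^2$ and $\sqrt{A^2+Y} \asymp M$, while the second gives the alternative bound $\sqrt{Y-Y'} \ll \nu^{1/2} M_0$. On the negative branch, the second inequality alone bounds each interval by $\sqrt{Y-Y'} \ll \nu^{1/2} M_0$. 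Summing over the at most four intervals and adding $O(1)$ for the discretisation error produces the first stated bound $\nu M_0^2/M + \nu^{1/2} M_0 + 1$. The ``in particular'' statement follows at once: under $M_0 \geq M$ and $\nu \leq 1$ one has both $\nu M_0^2/M \leq \nu^{1/2} M_0^2/M$ and $\nu^{1/2} M_0 \leq \nu^{1/2} M_0^2/M$, so the two main terms collapse into $\nu^{1/2} M_0^2/M$. The author's reference to \cite{MR2853047} confirms that no further ingredient is required; the only mild technical point is to keep both inequalities $(u-v)/\sqrt{u}$ and $\sqrt{u-v}$ in play, since using either in isolation would fail on the negative branch when $\sqrt{A^2-Y}$ becomes small.
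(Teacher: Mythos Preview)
Your argument is correct. The paper does not actually prove this lemma: it simply states that the result follows from the analysis of \cite[Lemma~4]{MR2853047}, so there is nothing to compare against beyond that reference. Your completion-of-the-square substitution $z=y+A$, the decomposition of $\mathcal{R}'$ into at most four intervals according to the sign of $z^2-A^2$, and the use of both $\sqrt{u}-\sqrt{v}\le (u-v)/\sqrt{u}$ and $\sqrt{u}-\sqrt{v}\le\sqrt{u-v}$ constitute exactly the elementary proof one would expect behind that citation, and every case (including $Y'<0$ and $A^2<Y$) is handled correctly.
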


Let us now prove lemma \ref{lemma affine}.

\begin{proof}
If $\mathcal{S} \cap \mathbb{Z}_{\neq 0}^2 = \emptyset$ then the result is obvious. We therefore assume from now on that
$\mathcal{S} \cap \mathbb{Z}_{\neq 0}^2 \neq \emptyset$. We let $0 < \delta, \delta' \leq 1$ be two parameters to be selected in due course and we set
$\zeta = 1 + \delta$ and $\zeta' = 1 + \delta'$. In addition, we let $U$ and $V$ be variables running respectively over the sets
$\{ \pm \zeta^n, n \in \mathbb{Z}_{\geq -1} \}$ and $\{ \pm \zeta'^n, n \in \mathbb{Z}_{\geq -1} \}$. We define $\mathcal{I} = ]U,\zeta U]$ if $U > 0$ and $\mathcal{I} = [\zeta U,U[$ if $U < 0$ and the range $\mathcal{J}$ is defined the same way using the variable $V$ and the parameter $\zeta'$. We have
\begin{eqnarray*}
D(\mathcal{S};q,\mathbf{a},b) -  \sum_{\mathcal{I} \times \mathcal{J} \cap \mathbb{Z}^2 \subset \mathcal{S}}
N(\mathcal{I},\mathcal{J};q,\mathbf{a},b) & \ll &
\sum_{\substack{\mathcal{I} \times \mathcal{J} \cap \mathbb{Z}^2 \nsubseteq \mathcal{S} \\ \mathcal{I} \times \mathcal{J} \cap \mathbb{Z}^2 \nsubseteq \mathbb{R}^2 \setminus \mathcal{S}}}
N(\mathcal{I},\mathcal{J};q,\mathbf{a},b) \textrm{.}
\end{eqnarray*}
We define the quantity
\begin{eqnarray*}
D(\mathcal{S};q) & = & \sum_{\mathcal{I} \times \mathcal{J} \cap \mathbb{Z}^2 \subset \mathcal{S}}
N^{\ast}(\mathcal{I},\mathcal{J};q) \textrm{.}
\end{eqnarray*}
We note here that since $N^{\ast}(\mathcal{I},\mathcal{J};q)$ is independent of $a_1$, $a_2$ and $b$, $D(\mathcal{S};q)$ is also independent of $a_1$, $a_2$ and $b$. Moreover, we have
\begin{eqnarray*}
\sum_{\mathcal{I} \times \mathcal{J} \cap \mathbb{Z}^2 \subset \mathcal{S}}
N(\mathcal{I},\mathcal{J};q,\mathbf{a},b) - D(\mathcal{S};q) & \ll & \frac{\log(2X)^2}{\delta \delta'} E(q,\mathbf{a}) \textrm{,}
\end{eqnarray*}
where we have used lemma \ref{Ramanujan lemma} and noted that the number of rectangles $\mathcal{I} \times \mathcal{J}$ such that
$\mathcal{I} \times \mathcal{J} \cap \mathbb{Z}^2 \subset \mathcal{S}$ is less than
\begin{eqnarray*}
4 \left( 1 + \frac{\log(X)}{\log(\zeta)} \right) \left( 1 + \frac{\log(X)}{\log(\zeta')} \right) & \ll & \frac{\log(2X)^2}{\delta \delta'} \textrm{,}
\end{eqnarray*}
since $\delta, \delta' \leq 1$. We have proved that
\begin{eqnarray*}
D(\mathcal{S};q,\mathbf{a},b) - D(\mathcal{S};q) & \ll &
\sum_{\substack{\mathcal{I} \times \mathcal{J} \cap \mathbb{Z}^2 \nsubseteq \mathcal{S} \\
\mathcal{I} \times \mathcal{J} \cap \mathbb{Z}^2 \nsubseteq \mathbb{R}^2 \setminus \mathcal{S}}}
N(\mathcal{I},\mathcal{J};q,\mathbf{a},b) + \frac{\log(2X)^2}{\delta \delta'} E(q,\mathbf{a}) \textrm{.}
\end{eqnarray*}
Using the bound \eqref{average bound} for $N(\mathcal{I},\mathcal{J};q,\mathbf{a},b)$, we conclude that
\begin{eqnarray*}
D(\mathcal{S};q,\mathbf{a},b) - D(\mathcal{S};q) \! & \ll & \! \frac1{\varphi(q)} \! \! \! \! \!
\sum_{\substack{\mathcal{I} \times \mathcal{J} \cap \mathbb{Z}^2 \nsubseteq \mathcal{S} \\ \mathcal{I} \times \mathcal{J} \cap \mathbb{Z}^2 \nsubseteq \mathbb{R}^2 \setminus \mathcal{S}}} \! \! \! \! \!
\# \left( \mathcal{I} \times \mathcal{J} \cap \mathbb{Z}^2 \right) + \frac{\log(2X)^2}{\delta \delta'} E(q,\mathbf{a}) \textrm{,}
\end{eqnarray*}
since the number of rectangles $\mathcal{I} \times \mathcal{J}$ satisfying
$\mathcal{I} \times \mathcal{J} \cap \mathbb{Z}^2 \nsubseteq \mathcal{S}$ and
$\mathcal{I} \times \mathcal{J} \cap \mathbb{Z}^2 \nsubseteq \mathbb{R}^2 \setminus \mathcal{S}$ is also
$\ll \log(2X)^2 \delta^{-1} \delta'^{-1}$. The sum of the right-hand side is over all the rectangles $\mathcal{I} \times \mathcal{J}$ for which we have $(\zeta^{s_1} U, \zeta'^{s_2} V) \in \mathcal{S} \cap \mathbb{Z}^2$ and
$(\zeta^{t_1} U, \zeta'^{t_2} V) \in \mathbb{Z}^2 \setminus \mathcal{S}$ for some $(s_1,s_2) \in ]0,1]^2$ and $(t_1,t_2) \in ]0,1]^2$. This means that one of the inequalities defining $\mathcal{S}$ is not satisfied by
$(\zeta^{t_1} U, \zeta'^{t_2} V)$ and we need to estimate the contribution coming from each condition among \eqref{A}, \eqref{B}, \eqref{C} and \eqref{D}. Note that we always have the conditions
\begin{eqnarray}
\label{C'}
A_1 |U| & \leq & X \textrm{,} \\
\label{D'}
A_2 |V| & \leq & X \textrm{.}
\end{eqnarray}
In what follows, we could sometimes write strict inequalities instead of non-strict ones but this would not change anything in our reasoning. Let us first deal with the condition \eqref{A}. For the rectangles
$\mathcal{I} \times \mathcal{J}$ described above, for some $(s_1,s_2) \in ]0,1]^2$ and $(t_1,t_2) \in ]0,1]^2$, we have
\begin{eqnarray}
\label{condition1}
\zeta^{s_1} \zeta'^{s_2} A_1 |U| A_2 |V| \left| \zeta^{s_1} A_1 U + \zeta'^{s_2} A_2 V - T \right| & \leq & T^2 X \textrm{,} \\
\label{condition2}
\zeta^{t_1}  \zeta'^{t_2} A_1 |U| A_2 |V| \left| \zeta^{t_1} A_1 U + \zeta'^{t_2} A_2 V - T \right| & > & T^2 X \textrm{.}
\end{eqnarray}
These two conditions imply respectively
\begin{eqnarray*}
\left| A_1 U + A_2 V - T \right| & \leq & \frac{T^2 X}{A_1|U| A_2 |V|} + \delta A_1|U| + \delta' A_2|V| \textrm{,}
\end{eqnarray*}
and
\begin{eqnarray*}
\left| A_1 U + A_2 V - T \right| & > & \zeta^{-1}  \zeta'^{-1} \frac{T^2 X}{A_1|U| A_2 |V|} - \delta A_1|U| - \delta' A_2|V| \textrm{.}
\end{eqnarray*}
Setting $\Delta = \delta + \delta'$, we thus get
\begin{eqnarray}
\label{condition3}
& & \zeta^{-1}  \zeta'^{-1} \frac{T^2 X}{A_1|U| A_2 |V|} - \Delta X < \left| A_1 U + A_2 V - T \right| \leq \frac{T^2 X}{A_1|U| A_2 |V|} + \Delta X \textrm{.}
\end{eqnarray}
Going back to the variables $u$ and $v$, it is immediate to check that
\begin{eqnarray*}
\big| \left| A_1 u + A_2 v - T \right| - \left| A_1 U + A_2 V - T \right| \big| & \leq & \delta A_1|U| + \delta' A_2|V| \\
& \leq & \Delta X \textrm{.}
\end{eqnarray*}
Therefore, the inequality \eqref{condition3} gives
\begin{eqnarray*}
& & \zeta^{-1} \zeta'^{-1} \frac{T^2 X}{A_1|u| A_2 |v|} - 2 \Delta X < \left| A_1 u + A_2 v - T \right| \leq \zeta \zeta' \frac{T^2 X}{A_1|u| A_2 |v|} + 2 \Delta X \textrm{.}
\end{eqnarray*}
Finally, we obtain the condition
\begin{eqnarray}
\label{condition u,v}
& & \zeta^{-1} \zeta'^{-1} \frac{T^2 X}{A_1^2 A_2 |v|} - 4 \Delta \frac{X^2}{A_1^2} < |u| \left| u + \frac{A_2}{A_1} v - \frac{T}{A_1} \right|
\leq \zeta \zeta' \frac{T^2 X}{A_1^2 A_2 |v|} + 4 \Delta \frac{X^2}{A_1^2} \textrm{.}
\end{eqnarray}
Since $T \leq 3 X$, we can apply the second estimate of lemma \ref{square} with
\begin{eqnarray*}
M_0 & = & \frac{X^{3/2}}{A_1 A_2^{1/2} |v|^{1/2}} \textrm{,}
\end{eqnarray*}
and $\nu = \Delta$. We see that the error we want to estimate is bounded by
\begin{eqnarray*}
\sum_{\substack{\eqref{C'}, \eqref{D'} \\ \eqref{condition3}}}
\# \left( \mathcal{I} \times \mathcal{J} \cap \mathbb{Z}^2 \right)
& \ll & \# \left\{ (u,v) \in \mathbb{Z}_{\neq 0}^2,
\begin{array}{l}
\eqref{condition u,v} \\
|u| \ll X/ A_1 \\
|v| \ll X/ A_2
\end{array}
\right\} \\
& \ll & \sum_{|v| \ll X / A_2} \left( \Delta^{1/2} \frac{X^{5/2}}{T A_1 A_2^{1/2} |v|^{1/2}}+ 1 \right) \\
& \ll & \Delta^{1/2} \frac{X^3}{TA_1A_2} + \frac{X}{A_2} \textrm{.}
\end{eqnarray*}
Using the symmetry between the variables $u$ and $v$, we see that we also have
\begin{eqnarray*}
\sum_{\substack{\eqref{C'}, \eqref{D'} \\ \eqref{condition3}}} \# \left( \mathcal{I} \times \mathcal{J} \cap \mathbb{Z}^2 \right) & \ll &
\Delta^{1/2}\frac{X^3}{TA_1A_2} + \frac{X}{A_1} \textrm{,}
\end{eqnarray*}
and thus
\begin{eqnarray*}
\sum_{\substack{\eqref{C'}, \eqref{D'} \\ \eqref{condition3}}} \# \left( \mathcal{I} \times \mathcal{J} \cap \mathbb{Z}^2 \right) & \ll &
\Delta^{1/2} \frac{X^3}{TA_1A_2} + \frac{X}{A_1^{1/2} A_2^{1/2}} \textrm{.}
\end{eqnarray*}
We now reason in a similar way to treat the cases of the other conditions. Let us estimate the contribution coming from the condition \eqref{B}. We see that the condition which plays the role of \eqref{condition3} in the previous case is here
\begin{eqnarray}
\label{condition4}
& & X - \Delta X < |A_1 U + A_2 V - T| \leq X + \Delta X \textrm{.}
\end{eqnarray}
Furthermore, going back to the variables $u$ and $v$, we  obtain
\begin{eqnarray}
\label{condition5}
& & X - 2 \Delta X < |A_1 u + A_2 v - T| \leq X + 2 \Delta X \textrm{.}
\end{eqnarray}
We therefore find that the error in this case is bounded by
\begin{eqnarray*}
\sum_{\substack{\eqref{C'}, \eqref{D'} \\ \eqref{condition4}}}
\# \left( \mathcal{I} \times \mathcal{J} \cap \mathbb{Z}^2 \right)
& \ll & \# \left\{ (u,v) \in \mathbb{Z}_{\neq 0}^2,
\begin{array}{l}
\eqref{condition5} \\
|u| \ll X / A_1 \\
|v| \ll X / A_2
\end{array}
\right\} \\
& \ll & \sum_{|v| \ll X / A_2} \left( \Delta \frac{X}{A_1} + 1 \right) \\
& \ll & \Delta \frac{X^2}{A_1A_2} + \frac{X}{A_2} \textrm{.}
\end{eqnarray*}
Once again, using the symmetry between the variables $u$ and $v$, we obtain
\begin{eqnarray*}
\sum_{\substack{\eqref{C'}, \eqref{D'} \\ \eqref{condition4}}}
\# \left( \mathcal{I} \times \mathcal{J} \cap \mathbb{Z}^2 \right)
& \ll & \Delta \frac{X^2}{A_1A_2} + \frac{X}{A_1^{1/2} A_2^{1/2}} \textrm{.}
\end{eqnarray*}
Finally, if $X / A_1 < 2$ then it is clear that we do not have to consider the case of the condition \eqref{C} and if $X / A_1 \geq 2$ then we are going to choose $\delta$ such that $X/A_1$ is an integer power of $\zeta$ and, as a result, we do not have to consider the case of this condition here either. The same reasoning holds for the choice of the parameter $\delta'$ depending on the size of the quantity $X/A_2$. As a consequence, we have obtained
\begin{eqnarray*}
D(\mathcal{S};q,\mathbf{a},b) - D(\mathcal{S};q) & \ll & \Delta^{1/2} \frac{X^3}{T A_1 A_2 \varphi(q)} + \frac{\log(2X)^2}{\delta \delta'} E(q,\mathbf{a})
+ \frac{X}{A_1^{1/2} A_2^{1/2} \varphi(q)} \textrm{.}
\end{eqnarray*}
Note that if $q=1$ then the result of lemma \ref{lemma affine} is clear and if $q > 1$ then the third term of the right hand-side is always dominated by one of the two others. Let $L \geq 1$. We can choose
\begin{eqnarray*}
\delta, \delta' & \asymp & \frac1{L^2} \textrm{,}
\end{eqnarray*}
such that $\zeta$ and $\zeta'$ are respectively integer powers of $X/A_1$ and $X/A_2$ if these quantities are greater or equal to $2$. These choices of $\delta$ and $\delta'$ give
\begin{eqnarray*}
D(\mathcal{S};q,\mathbf{a},b) - D(\mathcal{S};q) & \ll & \frac1{L} \frac{X^3}{T A_1 A_2 \varphi(q)} + L^4 \log(2X)^2 E(q,\mathbf{a}) \textrm{.}
\end{eqnarray*}
Since $D(\mathcal{S};q)$ does not depend on $a_2$ and since $D^{\ast}(\mathcal{S};q)$ is the average of $D(\mathcal{S};q,\mathbf{a},b)$ over $a_2$ coprime to $q$, averaging the latter estimate over $a_2$ coprime to $q$ yields
\begin{eqnarray*}
D^{\ast}(\mathcal{S};q) - D(\mathcal{S};q) & \ll &  \frac1{L} \frac{X^3}{T A_1 A_2 \varphi(q)} + L^4 \log(2X)^2 E_1(q)  \textrm{.}
\end{eqnarray*}
Putting these two estimates together completes the proof.
\end{proof}

Note that the application of lemma \ref{square} could have been achieved less crudely using the first estimate of this lemma instead. However, we will see that this does not matter much for our purpose. Indeed, the estimate for $D^{\ast}(\mathcal{S};q)$ given in lemma \ref{volume D} proves that the result of lemma
\ref{lemma affine} is interesting only if $T$ is not too small compared to $X$. Fortunately, in the setting of the proof of theorem \ref{Manin}, we will be able to restrict ourselves to a situation in which $T$ and $X$ have comparable orders of magnitude, as stated in lemma \ref{lemma affine final}.

Our next aim is to approximate the cardinality which appears in $D^{\ast}(\mathcal{S};q)$ by its corresponding two-dimensional volume. For this, we define the real-valued function
\begin{eqnarray}
\label{equation h}
h & : & (x,y,t) \mapsto \max \left\{ |x y| \left| x + y - t \right|, t^2|x|, t^2|y|,  t^2|x + y - t| \right\} \textrm{.}
\end{eqnarray}
It is immediate to check that
\begin{eqnarray}
\label{equality S}
\mathcal{S} & = & \left\{ (x,y) \in \mathbb{R}^2, h \left( \frac{A_1 x}{X^{1/3} T^{2/3}}, \frac{A_2 y}{X^{1/3} T^{2/3}}, \frac{T^{1/3}}{X^{1/3}} \right) \leq 1 \right\} \textrm{.}
\end{eqnarray}
We also introduce the real-valued functions
\begin{eqnarray}
\notag
g_1 & : & (y,t) \mapsto \int_{h(x,y,t) \leq 1} \D x \textrm{,} \\
\label{def g_2}
g_2 & : & t \mapsto \int g_1(y,t) \D y \textrm{.}
\end{eqnarray}

We now prove the following lemma.

\begin{lemma}
\label{bounds}
For $(y,t) \in \mathbb{R} \times \mathbb{R}_{>0}$, we have the bounds
\begin{eqnarray*}
g_1(y,t) & \ll & t^{-2} \textrm{,} \\
g_2(t) & \ll & 1 \textrm{.}
\end{eqnarray*}
\end{lemma}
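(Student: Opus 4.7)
The bound $g_1(y,t)\ll t^{-2}$ is immediate: the condition $h(x,y,t)\leq 1$ forces $t^2|x|\leq 1$, so $|x|\leq t^{-2}$ and $g_1(y,t)\leq 2t^{-2}$.

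For $g_2(t)\ll 1$ I would first dispose of the range $t\geq 1$ directly: the box constraints $t^2|x|,t^2|y|\leq 1$ alone give $g_2(t)\leq 4/t^4\leq 4$. In the substantive range $0<t<1$, the box constraints are weak and it is the cubic condition $|xy(x+y-t)|\leq 1$ that keeps the region bounded. I would estimate $g_1(y,t)$ by cases on $|y|$: in the range $|y|\leq t^4$, the maximum of $|x(x+y-t)|$ on the admissible $x$-box is of order $t^{-4}\leq 1/|y|$, so the cubic constraint is slack, $g_1(y,t)\ll t^{-2}$, and this range contributes only $O(t^2)$ to $g_2(t)$.

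For $|y|>t^4$, I would complete the square in the condition $|x(x+y-t)|\leq 1/|y|$ to obtain
\[
\left|\left(x+\tfrac{y-t}{2}\right)^2-\tfrac{(y-t)^2}{4}\right|\leq \tfrac{1}{|y|}\textrm{.}
\]
A routine analysis of this quadratic level set yields an $x$-measure $\ll 1/\sqrt{|y|}$ in the \emph{no-hole} regime $(y-t)^2|y|\leq 8$ (a single interval), and the sharper $\ll 1/\bigl(|y|\cdot|y-t|\bigr)$ in the \emph{hole} regime $(y-t)^2|y|>8$, where the admissible set is two narrow intervals flanking a gap, the bound coming from the elementary estimate $\sqrt{A+B}-\sqrt{A-B}\leq B/\sqrt{A-B}$. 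Splitting the remaining integration range at $|y|=C$ for a suitable absolute constant $C$, the contribution of $t^4\leq|y|\leq C$ is controlled by $\int |y|^{-1/2}\,dy=O(1)$, and that of $C\leq|y|\leq t^{-2}$ by $\int dy/\bigl(|y|(|y|-t)\bigr)$, which via partial fractions $1/\bigl(y(y-t)\bigr)=(1/t)\bigl(1/(y-t)-1/y\bigr)$ telescopes to $(1/t)\log(1+t+t^{2})=O(1)$ uniformly as $t\to 0^{+}$.

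The only real technical point is that the sharper two-interval bound $1/(|y|\cdot|y-t|)$ is indispensable: with only the crude outer-radius estimate $\ll |y-t|+|y|^{-1/2}$, the tail integral would diverge like $1/t$ and the conclusion $g_2(t)\ll 1$ would fail.
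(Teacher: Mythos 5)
Your argument is correct and rests on the same core estimate as the paper's proof, namely the bound $\int_{|xy(x+y-t)|\le 1}\D x\ll\min\bigl\{|y|^{-1/2},(|y|\,|y-t|)^{-1}\bigr\}$, which the paper simply quotes as an elementary lemma from the literature and which you re-derive by completing the square and analysing the quadratic level set. The only other difference is bookkeeping in the $y$-integration: you treat $t\ge 1$ trivially via the box constraints and split the remaining range at $|y|=t^{4}$ and $|y|=C$, whereas the paper splits uniformly in $t$ into $|y|\le 1$, then $|y|,|y-t|\ge 1$, then $|y|\ge 1,\ |y-t|\le 1$ (using the interpolated bound $|y|^{-3/4}|y-t|^{-1/2}$ on the last range); both routes give $O(1)$ uniformly in $t>0$.
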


\begin{proof}
The bound for $g_1$ is clear since $t^2|x| \leq 1$. To prove the bound for $g_2$, we use the elementary result \cite[Lemma $5$.$1$]{MR2520770}. We obtain
\begin{eqnarray*}
\int_{|x y| \left| x + y - t \right| \leq 1} \D x & \ll & \min \left\{ \frac1{|y|^{1/2}}, \frac1{|y||y-t|} \right\} \textrm{.}
\end{eqnarray*}
Therefore, we have
\begin{eqnarray*}
g_2(t) & \ll & \int_{|y| \leq 1} \frac{\D y}{|y|^{1/2}} + \int_{|y|, |y-t| \geq 1} \frac{\D y}{|y||y-t|} +
 \int_{|y| \geq 1, |y-t| \leq 1}  \frac{\D y}{|y|^{3/4}|y-t|^{1/2}} \textrm{.}
\end{eqnarray*}
The three terms of the right-hand side are easily seen to be bounded by an absolute constant, which completes the proof.
\end{proof}

We now prove that the following result holds.

\begin{lemma}
\label{volume D}
We have the estimate
\begin{eqnarray*}
D^{\ast}(\mathcal{S};q) - \frac{\varphi(q)}{q^2} \frac{X^{2/3} T^{4/3}}{A_1 A_2} g_2 \left( \frac{T^{1/3}}{X^{1/3}} \right) & \ll &
\frac{X^2}{A_1 A_2 q} \left( \frac{A_1^{1/2}}{X^{1/2}} + \frac{A_2^{1/2}}{X^{1/2}} \right) E_2(q) \textrm{,}
\end{eqnarray*}
where
\begin{eqnarray*}
E_2(q) & = & \frac{q}{\varphi(q)} \sigma_{-1/2}(q) \sigma_{-1}(q) \textrm{.}
\end{eqnarray*}
\end{lemma}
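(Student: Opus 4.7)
The plan is to handle the coprimality condition via M\"obius inversion, compute the volume using the change of variables implicit in \eqref{equality S}, and bound the error in approximating the lattice count by the volume using a slicing argument together with lemma \ref{square}.

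First, I would expand $\mathbf{1}[\gcd(uv,q)=1]=\sum_{d_1 \mid \gcd(u,q)}\mu(d_1)\sum_{d_2 \mid \gcd(v,q)}\mu(d_2)$ and substitute $u=d_1u'$, $v=d_2v'$. A direct inspection of \eqref{A}--\eqref{D} shows that $(u,v)\in\mathcal{S}(X,T,A_1,A_2)$ is equivalent to $(u',v')\in\mathcal{S}(X,T,d_1A_1,d_2A_2)$, and therefore
\begin{eqnarray*}
\varphi(q) D^{\ast}(\mathcal{S};q) & = & \sum_{d_1,d_2 \mid q} \mu(d_1)\mu(d_2) \# \left(\mathcal{S}(X,T,d_1A_1,d_2A_2) \cap \mathbb{Z}_{\neq 0}^2\right).
\end{eqnarray*}
The change of variables implicit in \eqref{equality S} together with the definition \eqref{def g_2} of $g_2$ yields $\vol(\mathcal{S}(X,T,A_1',A_2')) = \frac{X^{2/3}T^{4/3}}{A_1'A_2'}g_2(T^{1/3}/X^{1/3})$. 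Substituting these volumes into the M\"obius sum and using $\sum_{d \mid q}\mu(d)/d=\varphi(q)/q$ produces exactly the claimed main term $\frac{\varphi(q)}{q^2}\frac{X^{2/3}T^{4/3}}{A_1A_2}g_2(T^{1/3}/X^{1/3})$ after dividing by $\varphi(q)$.

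Second, for each pair $(d_1,d_2)$, setting $A_i'=d_iA_i$, I would approximate the lattice count by its volume by slicing in $v$. For fixed $v\in\mathbb{Z}_{\neq 0}$, the cubic condition \eqref{A} becomes the quadratic inequality $|u(A_1'u+A_2'v-T)|\leq T^2X/(A_1'A_2'|v|)$ in $u$, which after an affine rescaling takes exactly the form $|z^2+2Bz|\leq Y$ required by lemma \ref{square}, with $B=(A_2'v-T)/(2A_1')$ and $Y=T^2X/(A_1'^{2}A_2'|v|)$. Intersected with the linear constraints \eqref{B} and \eqref{C}, the slice $\mathcal{S}_v$ is a union of $O(1)$ intervals in $u$. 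Applying lemma \ref{square} to control the integer count in each slice and summing over $v$ with $|v|\leq X/A_2'$, together with the comparison of $\sum_v\mathrm{len}(\mathcal{S}_v)$ with $\int\mathrm{len}(\mathcal{S}_y)\,\D y = \vol(\mathcal{S}(X,T,A_1',A_2'))$ (the discretization error being controlled via $\sup_y\mathrm{len}(\mathcal{S}_y)\ll X/A_1'$, which follows from the bound $g_1(y,t)\ll t^{-2}$ established in lemma \ref{bounds}), yields the per-term estimate
\begin{eqnarray*}
\#\left(\mathcal{S}(X,T,A_1',A_2')\cap\mathbb{Z}_{\neq 0}^2\right) - \vol(\mathcal{S}(X,T,A_1',A_2')) & \ll & \frac{X^{3/2}}{A_1'^{1/2}A_2'} + \frac{X^{3/2}}{A_1'A_2'^{1/2}},
\end{eqnarray*}
the second term arising from the symmetric slicing in $u$.

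Third, I would sum this per-term bound over $d_1,d_2 \mid q$ and divide by $\varphi(q)$: the divisor sums $\sum_{d \mid q}d^{-1/2}=\sigma_{-1/2}(q)$ and $\sum_{d \mid q}d^{-1}=\sigma_{-1}(q)$ combine with the factor $q/\varphi(q)$ to reproduce exactly $E_2(q)$, matching the stated right-hand side. The main obstacle is extracting the half-integer exponent in the per-term bound: this requires invoking lemma \ref{square} with parameters $\nu$ and $M_0$ chosen so as to exploit the curvature of the cubic boundary defined by \eqref{A}, rather than relying only on the cruder per-slice bound $\#(\mathcal{S}_v\cap\mathbb{Z})=\mathrm{len}(\mathcal{S}_v)+O(1)$, which would aggregate to a factor $\tau(q)\sigma_{-1}(q)$ in place of the sharper $\sigma_{-1/2}(q)\sigma_{-1}(q)$ that enters into $E_2(q)$.
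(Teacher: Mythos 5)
Your skeleton is the paper's: double M\"obius inversion over the coprimality conditions, the observation that $(d_1u',d_2v')\in\mathcal{S}(X,T,A_1,A_2)$ iff $(u',v')\in\mathcal{S}(X,T,d_1A_1,d_2A_2)$, the volume $\frac{X^{2/3}T^{4/3}}{A_1'A_2'}g_2(T^{1/3}/X^{1/3})$, and the aggregation of per-divisor errors into $\sigma_{-1/2}(q)\sigma_{-1}(q)$ all match. The gap is in the justification of the per-term estimate, which is the heart of the lemma. Lemma \ref{square} is an \emph{upper bound} for the number of integers in a thin region $Y'<|y^2+2Ay|\le Y$ with $0<Y-Y'\ll\nu M_0^2$; it cannot convert the integer count in a full slice $\mathcal{S}_v$ into ``length of $\mathcal{S}_v$ plus a small error'', which is what a comparison with the volume requires (in the paper it is used only in lemma \ref{lemma affine}, where one genuinely counts points in thin boundary sets and no main term is extracted). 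Moreover, no curvature input is needed here: each slice is a union of $O(1)$ intervals contained in $|u'|\le X/(A_1d_1)$ by \eqref{C}, so the trivial estimate ``count $=$ length $+O(1)$'' may be written with error $O\bigl((X/(A_1d_1))^{1/2}\bigr)$, because the slice contains no nonzero integer unless $A_1d_1\le X$ (and when $A_1d_1>X$ the count vanishes and the volume itself is $\ll X^{3/2}/((A_1d_1)^{1/2}A_2d_2)$). Summing over the $\ll X/(A_2d_2)$ values of $v'$ already yields the first term with the $d_1^{-1/2}$ that produces $\sigma_{-1/2}(q)$; so your closing claim that the elementary per-slice bound can only aggregate to $\tau(q)\sigma_{-1}(q)$ is mistaken --- the $\tau(q)$ appears only if one discards the $d_1$-dependence of the slice, and this is exactly the (elementary) point exploited by the paper, not lemma \ref{square}.

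The second defect is the discretization step. Comparing $\sum_{v'}\mathrm{len}(\mathcal{S}_{v'})$ with $\int\mathrm{len}(\mathcal{S}_y)\,\D y$ cannot be controlled by the sup bound $g_1(y,t)\ll t^{-2}$ alone: a pointwise bound says nothing about the sum-versus-integral discrepancy, and used naively it gives (number of slices) times the sup, which is far too large. The paper uses that $g_1(\cdot,t)$ has a piecewise continuous derivative with boundedly many sign changes and applies partial summation, getting an error $\ll\sup g_1\ll X^{2/3}/T^{2/3}\ll X^{7/6}/(T^{2/3}(A_2d_2)^{1/2})$ (again via $A_2d_2\le X$ in the nontrivial range); multiplied by $X^{1/3}T^{2/3}/(A_1d_1)$ this is precisely the second term $X^{3/2}/(A_1d_1(A_2d_2)^{1/2})$. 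It does not arise from ``symmetric slicing in $u$'', and one cannot combine the favourable halves of two separate slicings without bounding each slicing's own discretization error. With these two repairs --- the per-slice error written with the $(X/(A_1d_1))^{1/2}$ factor, and partial summation using the regularity of $g_1$ --- your outline becomes the paper's proof; as written, the appeal to lemma \ref{square} and to the sup bound does not establish the stated per-term estimate.
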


\begin{proof}
We start by removing the two coprimality conditions $\gcd(u,q)=1$ and $\gcd(v,q)=1$ using Möbius inversions. We get
\begin{eqnarray}
\label{estimate D}
D^{\ast}(\mathcal{S};q) & = & \frac1{\varphi(q)} \sum_{\ell_1 \mid q} \mu(\ell_1) \sum_{\ell_2 \mid q} \mu(\ell_2) C(\ell_1, \ell_2, \mathcal{S}) \textrm{,}
\end{eqnarray}
where
\begin{eqnarray*}
C(\ell_1, \ell_2, \mathcal{S}) & = & \# \left\{ \left( u', v' \right) \in \mathbb{Z}_{\neq 0}^2, \left( \ell_1 u', \ell_2 v' \right) \in \mathcal{S} \right\} \textrm{.}
\end{eqnarray*}
To count the number of $u'$ to be considered, we use the estimate
\begin{eqnarray*}
\# \left\{ n \in \mathbb{Z}_{\neq 0} \cap [t_1,t_2] \right\} & = & t_2 - t_1 + O \left( \max (|t_1|,|t_2|)^{1/2} \right) \textrm{.}
\end{eqnarray*}
We obtain
\begin{eqnarray*}
C(\ell_1, \ell_2, \mathcal{S}) & = & \sum_{\substack{v' \in \mathbb{Z}_{\neq 0} \\ A_2 \ell_2 |v'| \leq X}}
\left( \frac{X^{1/3} T^{2/3}}{A_1 \ell_1} g_1 \left( \frac{A_2 \ell_2 v'}{X^{1/3} T^{2/3}}, \frac{T^{1/3}}{X^{1/3}} \right)
+ O\left( \frac{X^{1/2}}{A_1^{1/2} \ell_1^{1/2}} \right) \right) \\
& = & \frac{X^{1/3} T^{2/3}}{A_1 \ell_1} \sum_{\substack{v' \in \mathbb{Z}_{\neq 0} \\ A_2 \ell_2 |v'| \leq X}}
g_1 \left( \frac{A_2 \ell_2 v'}{X^{1/3} T^{2/3}}, \frac{T^{1/3}}{X^{1/3}} \right) + O \left( \frac{X^{3/2}}{A_1^{1/2} \ell_1^{1/2} A_2 \ell_2} \right)
\textrm{.}
\end{eqnarray*}
The first bound of lemma \ref{bounds} implies that
\begin{eqnarray*}
\sup_{|y| \leq X^{2/3}/T^{2/3}} g_1 \left( y, \frac{T^{1/3}}{X^{1/3}} \right) & \ll & \frac{X^{2/3}}{T^{2/3}} \textrm{.}
\end{eqnarray*}
Since $g_1$ is easily seen to have a piecewise continuous derivative, this bound and an application of partial summation yield
\begin{eqnarray*}
\sum_{\substack{v' \in \mathbb{Z}_{\neq 0} \\ A_2 \ell_2 |v'| \leq X}} g_1 \left( \frac{A_2 \ell_2 v'}{X^{1/3} T^{2/3}}, \frac{T^{1/3}}{X^{1/3}} \right) & = &
\frac{X^{1/3} T^{2/3}}{A_2 \ell_2} g_2 \left( \frac{T^{1/3}}{X^{1/3}} \right) + O \left( \frac{X^{7/6}}{T^{2/3} A_2^{1/2} \ell_2^{1/2}} \right) \textrm{.}
\end{eqnarray*}
We have finally proved that
\begin{eqnarray*}
C(\ell_1, \ell_2, \mathcal{S}) & = & \frac1{\ell_1 \ell_2} \frac{X^{2/3} T^{4/3}}{A_1 A_2} g_2 \left( \frac{T^{1/3}}{X^{1/3}} \right)
+ O \left( \frac{X^{3/2}}{A_1 \ell_1 A_2^{1/2} \ell_2^{1/2}} + \frac{X^{3/2}}{A_1^{1/2} \ell_1^{1/2}  A_2\ell_2} \right)
\textrm{.}
\end{eqnarray*}
Putting together this estimate and the equality \eqref{estimate D} completes the proof.
\end{proof}

One of the immediate consequences of lemmas \ref{lemma affine} and \ref{volume D} is the following result, which corresponds exactly to the setting of the proof of theorem \ref{Manin}.

\begin{lemma}
\label{lemma affine final}
Let $L \geq 1$ and $\mathcal{L} \geq 1$. If
\begin{eqnarray*}
\frac{X}{\mathcal{L}} & \leq & T \textrm{,}
\end{eqnarray*}
then we have the estimate
\begin{eqnarray*}
D(\mathcal{S};q,\mathbf{a},b) - \frac{\varphi(q)}{q^2} \frac{X^{2/3} T^{4/3}}{A_1 A_2} g_2 \left( \frac{T^{1/3}}{X^{1/3}} \right) & \ll & E \textrm{,}
\end{eqnarray*}
where $E = E(X,T,A_1,A_2,L,\mathcal{L},q,\mathbf{a})$ is given by
\begin{eqnarray*}
E & = & L^4 \log(2X)^2 E(q,\mathbf{a}) + \frac{X^{2/3} T^{4/3}}{A_1 A_2 q} \mathcal{L}^{4/3} \left( \frac{\mathcal{L}}{L} + \frac{A_1^{1/2}}{X^{1/2}} + \frac{A_2^{1/2}}{X^{1/2}} \right) E_2(q) \textrm{.}
\end{eqnarray*}
\end{lemma}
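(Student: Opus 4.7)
The plan is to deduce the estimate by simply combining lemmas \ref{lemma affine} and \ref{volume D} through the triangle inequality, and then rewriting each of the three resulting error terms in the normalized shape appearing in $E$, using the hypothesis $T \geq X/\mathcal{L}$ to convert any ``excess'' factor of $X/T$ into a power of $\mathcal{L}$. No geometric or arithmetic argument is needed here: everything has already been done, and only bookkeeping remains.

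More concretely, applying the triangle inequality produces an overall error bounded, up to a multiplicative constant, by
\begin{eqnarray*}
\frac{1}{L}\frac{X^3}{TA_1A_2\varphi(q)} \;+\; L^4 \log(2X)^2 E(q,\mathbf{a}) \;+\; \frac{X^2}{A_1 A_2 q}\left(\frac{A_1^{1/2}}{X^{1/2}}+\frac{A_2^{1/2}}{X^{1/2}}\right) E_2(q).
\end{eqnarray*}
The middle term is exactly the first term in $E$, so nothing is needed. For the other two terms, I would normalize against the target factor $X^{2/3}T^{4/3}/(A_1A_2 q)$.

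For the first term, I write
\begin{eqnarray*}
\frac{X^3}{T A_1 A_2 \varphi(q)} \;=\; \frac{X^{2/3}T^{4/3}}{A_1 A_2 q}\cdot \left(\frac{X}{T}\right)^{7/3}\cdot \frac{q}{\varphi(q)},
\end{eqnarray*}
and then invoke $X/T \leq \mathcal{L}$ to get $(X/T)^{7/3} \leq \mathcal{L}^{4/3}\cdot\mathcal{L}$, and the trivial bound $q/\varphi(q) \leq E_2(q)$ (which is clear since $\sigma_{-1/2}(q)$ and $\sigma_{-1}(q)$ are $\geq 1$). Dividing by $L$ yields the contribution $\frac{X^{2/3}T^{4/3}}{A_1 A_2 q}\mathcal{L}^{4/3}\frac{\mathcal{L}}{L}E_2(q)$, which is absorbed into $E$. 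For the third term, similarly $X^2 = X^{2/3}T^{4/3}\cdot(X/T)^{4/3}\leq X^{2/3}T^{4/3}\mathcal{L}^{4/3}$, so this contribution becomes $\frac{X^{2/3}T^{4/3}}{A_1 A_2 q}\mathcal{L}^{4/3}\bigl(A_1^{1/2}/X^{1/2}+A_2^{1/2}/X^{1/2}\bigr)E_2(q)$, again absorbed into $E$.

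Since the statement is essentially a packaging result, there is no real obstacle; the only thing to be mildly careful about is the exponent arithmetic $7/3 = 4/3 + 1$ which ensures that the factor $\mathcal{L}^{4/3}(\mathcal{L}/L)$ in $E$ is the right one, and to observe that $q/\varphi(q)\leq E_2(q)$ so the first term can be unified with the other two under the common factor $E_2(q)$.
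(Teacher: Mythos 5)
Your proposal is correct and matches the paper's intent exactly: the paper states this lemma as an ``immediate consequence'' of lemmas \ref{lemma affine} and \ref{volume D} without writing out the details, and your bookkeeping (the triangle inequality, the identities $X^3/T = X^{2/3}T^{4/3}(X/T)^{7/3}$ and $X^2 = X^{2/3}T^{4/3}(X/T)^{4/3}$, the bound $X/T \leq \mathcal{L}$, and the trivial inequality $q/\varphi(q) \leq E_2(q)$) is precisely the verification that was left implicit.
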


\subsection{The error term}

We now turn to the investigation of the error term $E(q,\mathbf{a}')$ in the particular case where $\mathbf{a}' = (b_1 c_1^2, b_2 c_2^2)$ for $b_1, b_2, c_1, c_2 \in \mathbb{Z}_{\geq 1}$. We aim to give an upper bound for the sums of $E(q,\mathbf{a}')$ over $c_1$ and $c_2$ in some dyadic ranges. For this, we make use the following result which comes from the geometry of numbers and which is due to Heath-Brown (see \cite[Lemma $3$]{MR757475}). Note that this result had already been used by Browning in \cite{MR2250046} to prove that $N_{U,H}(B)$ has the expected order of magnitude.

\begin{lemma}
\label{geometry lemma}
Let $(v_1,v_2,v_3) \in \mathbb{Z}^3$ be a primitive vector and let $W_1, W_2, W_3 \geq 1$. The number of primitive vectors $(w_1,w_2,w_3) \in \mathbb{Z}^3$ satisfying the conditions $|w_i| \leq W_i$ for  $i = 1,2,3$ and the equation
\begin{eqnarray*}
v_1 w_1 + v_2 w_2 + v_3 w_3 & = & 0 \textrm{,}
\end{eqnarray*}
 is at most
\begin{eqnarray*}
12 \pi \frac{W_1 W_2 W_3}{ \max \left\{ |v_i|W_i \right\} } + 4 \textrm{,}
\end{eqnarray*}
where the maximum is taken over $i=1,2,3$.
\end{lemma}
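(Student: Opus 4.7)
The plan is to recast the problem as counting primitive points of a $2$-dimensional lattice in a plane section of a box, then to invoke the geometry of numbers. The integer solutions $\Lambda := \{ w \in \mathbb{Z}^3 : v_1 w_1 + v_2 w_2 + v_3 w_3 = 0 \}$ form a sublattice of $\mathbb{Z}^3$ of rank $2$. The primitivity of $v = (v_1,v_2,v_3)$ guarantees that there is some $e_3 \in \mathbb{Z}^3$ with $v \cdot e_3 = 1$, which when adjoined to any $\mathbb{Z}$-basis $b_1, b_2$ of $\Lambda$ yields a basis of $\mathbb{Z}^3$; a short wedge-product computation then gives $\det(\Lambda) = |v| := \sqrt{v_1^2 + v_2^2 + v_3^2}$. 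The problem thus reduces to bounding the number of primitive vectors of $\Lambda$ lying in the symmetric convex box $B := \prod_{i=1}^{3} [-W_i,W_i]$.

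The main geometric step is to estimate the area of the plane section $K := B \cap \mathrm{span}_{\mathbb{R}}(\Lambda)$. Let $W := \max_i |v_i| W_i$ and, without loss of generality, assume this maximum is attained at $i=1$. Parametrizing the plane $v \cdot w = 0$ by the map $(w_2,w_3) \mapsto \bigl(-(v_2 w_2 + v_3 w_3)/v_1, w_2, w_3\bigr)$, whose Jacobian factor is $|v|/|v_1|$, and noting that the preimage of $K$ is contained in $[-W_2,W_2] \times [-W_3,W_3]$, I obtain
\begin{eqnarray*}
\mathrm{area}(K) & \le & \frac{|v|}{|v_1|} \cdot 4 W_2 W_3 = \frac{4 |v| W_1 W_2 W_3}{W} \textrm{.}
\end{eqnarray*}

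For the lattice-point count I would work with the successive minima $\mu_1 \le \mu_2$ of $\Lambda$ relative to the symmetric convex body $K$ and pick a Minkowski-reduced basis $b_1, b_2$ of $\Lambda$ with $\|b_i\|_K \asymp \mu_i$. Any $w \in \Lambda \cap K$ admits a unique expression $n_1 b_1 + n_2 b_2$ with $|n_i| \ll 1/\mu_i$, and primitivity of $w$ in $\mathbb{Z}^3$ is equivalent to $\gcd(n_1, n_2) = 1$ because $(b_1, b_2)$ is a $\mathbb{Z}$-basis of $\Lambda$. When $\mu_2 \le 1$, Minkowski's second theorem in dimension $2$ yields $\mu_1 \mu_2 \ge 2 \det(\Lambda)/\mathrm{area}(K)$, so the count is $\ll 1/(\mu_1 \mu_2) \ll \mathrm{area}(K)/\det(\Lambda) \ll W_1 W_2 W_3/W$. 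When $\mu_1 \le 1 < \mu_2$ only multiples of $b_1$ survive, of which at most $\pm b_1$ are primitive, while $\mu_1 > 1$ forces the count to be zero. The main obstacle is not the conceptual argument but tracking the explicit constants: recovering the coefficient $12\pi$ requires comparing $K$ to a circumscribed disk of area $\pi r^2$ when estimating $1/(\mu_1 \mu_2)$, and the additive $+4$ is precisely what is needed to absorb the two degenerate ranges of the successive minima.
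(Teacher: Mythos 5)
There is no internal proof to compare against: the paper takes this statement verbatim from Heath-Brown \cite[Lemma~3]{MR757475} and never proves it. Your reduction is the right one and is in the same circle of ideas as that source: the solution set is the rank-two saturated sublattice $\Lambda = \{w \in \mathbb{Z}^3 : v \cdot w = 0\}$ of determinant $|v|$ (your argument via a vector $e_3$ with $v \cdot e_3 = 1$ is fine), the area of the planar section $K$ is at most $4|v|W_1W_2W_3/\max_i\{|v_i|W_i\}$ (the Jacobian factor $|v|/|v_1|$ is correct, and the index attaining the maximum automatically has $v_i \neq 0$), primitivity in $\mathbb{Z}^3$ is indeed equivalent to $\gcd(n_1,n_2)=1$ because $\Lambda$ is saturated, and the degenerate ranges $\mu_1 > 1$ and $\mu_1 \leq 1 < \mu_2$ contribute at most the two primitive points $\pm b_1$. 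With the lower bound $\mu_1\mu_2\,\mathrm{area}(K) \geq 2\det(\Lambda)$ from Minkowski's second theorem, your main case gives the correct order $W_1W_2W_3/\max_i\{|v_i|W_i\}$.

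The genuine shortfall is that the lemma is an explicit inequality, with the constants $12\pi$ and $4$, and your argument does not produce any explicit constant: the two decisive counting steps --- that a reduced basis forces $|n_i| \ll 1/\mu_i$, hence at most $\ll 1/(\mu_1\mu_2)$ lattice points of $K$ when $\mu_2 \leq 1$ --- are asserted only with unspecified implied constants, and your closing sentence concedes that the bookkeeping needed to reach $12\pi$ is not carried out. As written, you have proved the lemma only up to an absolute constant; that happens to be all the paper ever uses (lemma \ref{error} only needs a bound of the shape $\ll C_1C_2/d + 1$), but it is not the stated result. To close the gap you would either need an explicit planar lattice-point count in terms of the successive minima (a bound of the form $\#(\Lambda \cap K) \leq (2/\mu_1+1)(2/\mu_2+1)$, say, combined with the Minkowski lower bound), or a sharper direct argument in the spirit of Heath-Brown's proof, for instance exploiting that suitably ordered consecutive primitive points of $\Lambda$ in $K$ form bases of $\Lambda$, so that they cut out triangles of area $|v|/2$ with pairwise disjoint interiors inside a region of controlled area; either route is what actually produces a concrete main-term constant and the additive constant $4$, and it is precisely the part you have left out.
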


From now on, we let $\tau$ be the usual divisor function. Recall the definitions of $E(q,\mathbf{a}')$ and $E_1(q)$ given in lemma \ref{Ramanujan lemma}. We now prove the following lemma.

\begin{lemma}
\label{error}
Let $C_1, C_2 \geq 1/2$. We have the bound
\begin{eqnarray*}
\sideset{}{^\ast} \sum_{\substack{C_i < c_i \leq 2C_i \\ \gcd(c_1,c_2)=1}}
E(q,\mathbf{a}') & \ll & \left( C_1 C_2 \tau(q)+ q \right) 2^{\omega(q)} E_1(q) \textrm{,}
\end{eqnarray*}
where the notation $\sum^{\ast}$ means that the summation is restricted to integers which are coprime to $q$ and where $i$ implicitly runs over the set $\{1,2\}$.
\end{lemma}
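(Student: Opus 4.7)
The plan is to handle the two pieces of $E(q,\mathbf{a}') = E_0(q,\mathbf{a}') + E_1(q)$ separately. Since $E_1(q)$ is independent of $\mathbf{a}'$, the estimate
\begin{equation*}
\sideset{}{^\ast}\sum_{\substack{C_i < c_i \leq 2C_i \\ \gcd(c_1,c_2)=1}} E_1(q) \ll C_1 C_2 E_1(q)
\end{equation*}
is trivial and is absorbed into the term $C_1 C_2 \tau(q) 2^{\omega(q)} E_1(q)$ of the desired bound. The main task is therefore to control the contribution of $E_0$.

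For $E_0$, I would interchange the order of summation and write
\begin{equation*}
\sideset{}{^\ast}\sum_{\substack{C_i < c_i \leq 2C_i \\ \gcd(c_1,c_2)=1}} E_0(q,\mathbf{a}') = \sum_{\substack{d \mid q \\ \mu^2(q/d)=1}} d \sum_{0 < |r|,|s| \leq q/2} \frac{N_d(r,s)}{|r|\,|s|},
\end{equation*}
where $N_d(r,s)$ is the number of admissible pairs $(c_1,c_2)$ satisfying the congruence $b_1 c_1^2 s \equiv b_2 c_2^2 r \imod{d}$. Lifting this congruence to the genuine integer equation $b_1 s c_1^2 - b_2 r c_2^2 = d m$ with $m \in \mathbb{Z}$, one obtains a linear relation in the triple $(c_1^2, c_2^2, m)$ with coefficient vector $(b_1 s, -b_2 r, -d)$. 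The coprimality $\gcd(c_1, c_2) = 1$ forces $(c_1^2, c_2^2, m)$ to be primitive in $\mathbb{Z}^3$, so Lemma \ref{geometry lemma} applies with box parameters $W_1 \asymp C_1^2$, $W_2 \asymp C_2^2$ and $W_3$ chosen as an admissible upper bound on $|m|$. Setting $g = \gcd(b_1 s, b_2 r, d)$, a direct computation shows that the maximum $|v_i| W_i$ is attained at the third coordinate, yielding
\begin{equation*}
N_d(r,s) \ll \frac{g \, C_1^2 C_2^2}{d} + 1.
\end{equation*}

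I would then combine this Heath-Brown estimate with the trivial bound $N_d(r,s) \ll C_1 C_2$ and split the weighted double sum in $(r,s)$ according to the value of $g$. Under $\gcd(b_1 b_2, q) = 1$ one has $g = \gcd(r, s, d)$, and M\"obius inversion gives $\sum_{r, s \,:\, \gcd(r, s, d) = g} 1/(|r|\,|s|) \ll \log^2(q)/g^2$. In the ``generic'' regime $g C_1 C_2 \leq d$ the Heath-Brown bound yields the genuine gain, while in the ``degenerate'' regime $g C_1 C_2 > d$ the trivial bound is sharper; summing both contributions and weighting by $d$, then summing over $d \mid q$ with $q/d$ squarefree, the divisor sum $\sum_{d \mid q} \sigma_{-1}(d)$ produces the factor $\tau(q)$, the count of squarefree $q/d$ produces the factor $2^{\omega(q)}$, the nested coprimality conditions (via M\"obius) produce the $(q/\varphi(q))^3$ in $E_1(q)$, and the weighted sums $\sum 1/(|r|\,|s|)$ produce the $\log^2(q)$ in $E_1(q)$.

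The main obstacle is reconciling the linear $C_1 C_2$ scaling of the target with the quadratic $C_1^2 C_2^2$ that naturally emerges from Heath-Brown's lemma applied to the triple $(c_1^2, c_2^2, m)$. The range-splitting in $g$ outlined above is exactly what is needed to resolve this: the degenerate configurations (those where the lattice of solutions has an atypically short vector, corresponding to large $g$) are bounded trivially, and their cumulative contribution produces precisely the ``$+q$'' term in the final estimate, while the generic contribution reproduces the main term $C_1 C_2 \tau(q) 2^{\omega(q)} E_1(q)$.
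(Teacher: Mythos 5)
Your reduction to bounding
\begin{equation*}
N_d(r,s) \;=\; \sideset{}{^\ast}\sum_{\substack{C_i < c_i \leq 2C_i,\ \gcd(c_1,c_2)=1 \\ b_1 c_1^2 s - b_2 c_2^2 r \equiv 0 \imod{d}}} 1
\end{equation*}
is the right first move, but the way you invoke Lemma \ref{geometry lemma} contains the essential gap. Applying the lemma to the triple $(c_1^2, c_2^2, m)$ counts \emph{all} primitive lattice points on the plane $b_1 s w_1 - b_2 r w_2 - d w_3 = 0$ in a box of dimensions $\asymp C_1^2 \times C_2^2 \times W_3$, not just those whose first two coordinates happen to be squares of integers in the prescribed dyadic ranges; the squares form a set of density about $1/(C_1 C_2)$ inside that box, and the lattice-point lemma cannot see this sparseness. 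Your resulting bound $N_d(r,s) \ll g\,C_1^2 C_2^2/d + 1$ is therefore weaker by a factor $\asymp C_1 C_2$ than the bound $\gcd(r,s,d)\,2^{\omega(d)}\left(C_1 C_2/d + 1\right)$ that the target requires, and the proposed splitting in $g$ does not recover it: in your ``generic'' regime $g C_1 C_2 \leq d$ both your Heath--Brown bound and the trivial bound give only $\ll C_1 C_2$ per pair $(r,s)$, with no decay in $d$, so the weighted sum $\sum_{d \mid q} d \sum_{r,s} |r|^{-1}|s|^{-1} N_d(r,s)$ can be as large as $C_1 C_2\, q \log(q)^2$. This is not $\ll (C_1 C_2 \tau(q) + q)\,2^{\omega(q)} E_1(q)$ once $C_1 C_2$ and $q$ are both large, so the argument as written does not close.

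The missing idea is to linearize the congruence \emph{before} appealing to the geometry of numbers. Since $\gcd(c_2, d) = 1$, one first solves for the ratio $\rho \equiv c_1 c_2^{-1} \imod{d}$: writing $g = \gcd(r,s,d)$, $r' = r/g$, $s' = s/g$, $d' = d/g$, the number of residues $\rho$ modulo $d$ with $b_1 s \rho^2 \equiv b_2 r \imod{d}$ is the number of solutions of $b_1 s' \rho^2 \equiv b_2 r' \imod{d'}$ lifted to modulus $d$, which is $\ll g\, 2^{\omega(d)}$. For each such $\rho$, the pairs $(c_1, c_2)$ satisfy the \emph{linear} congruence $c_1 \equiv \rho c_2 \imod{d}$, i.e.\ $c_1 - \rho c_2 - d m = 0$ with $(c_1, c_2, m)$ primitive because $\gcd(c_1,c_2) = 1$; now Lemma \ref{geometry lemma} applies with boxes of the genuine sizes $W_1 \asymp C_1$, $W_2 \asymp C_2$ and yields $\ll C_1 C_2/d + 1$ per value of $\rho$. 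This produces exactly $N_d(r,s) \ll \gcd(r,s,d)\, 2^{\omega(d)}(C_1 C_2/d + 1)$, after which the summations over $r$, $s$ and $d$ go through as you describe (note also that the factor $2^{\omega(q)}$ arises from the number of square roots modulo $d'$, not from the squarefreeness of $q/d$). Your closing remarks about the $C_1 C_2\tau(q)$ and $+q$ terms correctly anticipate the shape of the final bookkeeping, but without the quadratic-to-linear reduction the decisive factor $1/d$ in front of $C_1 C_2$ is simply not available.
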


\begin{proof}
We have
\begin{eqnarray*}
\sideset{}{^\ast} \sum_{\substack{C_i < c_i \leq 2C_i \\ \gcd(c_1,c_2)=1}} E(q,\mathbf{a}') & \ll &
\sideset{}{^\ast} \sum_{\substack{C_i < c_i \leq 2C_i \\ \gcd(c_1,c_2)=1}} E_0(q,\mathbf{a}') + C_1 C_2 E_1(q) \textrm{.}
\end{eqnarray*}
The first term of the right-hand side is less than
\begin{eqnarray*}
& & \sum_{d|q} d \sum_{0 < |r|, |s| \leq q/2} |r|^{-1} |s|^{-1} \sum_{\substack{C_i < c_i \leq 2C_i \\ \gcd(c_1,c_2)=1 \\ b_1 c_1^2 s - b_2 c_2^2 r \equiv 0 \imod{d}}} 1 \textrm{.}
\end{eqnarray*}
Let us set $g = \gcd(r,s,d)$ and $s' = s/g$, $r' = r/g$ and $d'=d/g$. We have
\begin{eqnarray*}
\sideset{}{^\ast} \sum_{\substack{C_i < c_i \leq 2C_i \\ \gcd(c_1,c_2)=1 \\ b_1 c_1^2 s - b_2 c_2^2 r \equiv 0 \imod{d}}} 1 & = &
\sum_{\substack{1 \leq \rho \leq d \\ b_1 s \rho^2 - b_2 r \equiv 0 \imod{d}}} \ \ \ \ \
\sideset{}{^\ast} \sum_{\substack{C_i < c_i \leq 2C_i \\ \gcd(c_1,c_2)=1 \\ \rho c_2 \equiv c_1 \imod{d}}} 1 \\
& = & \sum_{\substack{1 \leq \rho \leq d \\ b_1 s' \rho^2 - b_2 r' \equiv 0 \imod{d'}}} \ \ \ \ \
\sideset{}{^\ast} \sum_{\substack{C_i < c_i \leq 2C_i \\ \gcd(c_1,c_2)=1 \\ \rho c_2 \equiv c_1 \imod{d}}} 1 \textrm{.}
\end{eqnarray*}
Using lemma \ref{geometry lemma}, we get
\begin{eqnarray*}
\sideset{}{^\ast} \sum_{\substack{C_i < c_i \leq 2C_i \\ \gcd(c_1,c_2)=1 \\ \rho c_2 \equiv c_1 \imod{d}}} 1 & \ll &
\frac{C_1 C_2}{d} + 1 \textrm{.}
\end{eqnarray*}
As a consequence, we have proved that
\begin{eqnarray*}
\sideset{}{^\ast} \sum_{\substack{C_i < c_i \leq 2C_i \\ \gcd(c_1,c_2)=1 \\ b_1 c_1^2 s - b_2 c_2^2 r \equiv 0 \imod{d}}} 1 & \ll &
\gcd(r,s,d) 2^{\omega(d)} \left( \frac{C_1 C_2}{d} + 1 \right) \textrm{.}
\end{eqnarray*}
Finally, we easily get
\begin{eqnarray*}
\sum_{d|q} d \sum_{0 < |r|, |s| \leq q/2} |r|^{-1} |s|^{-1} \frac{\gcd(r,s,d) 2^{\omega(d)}}{d} & \ll &
\sum_{d|q} 2^{\omega(d)} \sum_{e|d} e \sum_{\substack{0 < |r|, |s| \leq q/2 \\ e|r, e|s}} |r|^{-1} |s|^{-1} \\
& \ll & 2^{\omega(q)} \tau(q) \sigma_{-1}(q) \log(q)^2 \\
& \ll & 2^{\omega(q)} \tau(q) E_1(q) \textrm{,}
\end{eqnarray*}
and, as in the proof of lemma \ref{Ramanujan lemma}, we obtain
\begin{eqnarray*}
\sum_{d|q} d \sum_{0 < |r|, |s| \leq q/2} |r|^{-1} |s|^{-1} \gcd(r,s,d) 2^{\omega(d)} & \ll & q 2^{\omega(q)} E_1(q) \textrm{.}
\end{eqnarray*}
As a result, we have proved that
\begin{eqnarray*}
\sideset{}{^\ast} \sum_{\substack{C_i < c_i \leq 2C_i \\ \gcd(c_1,c_2)=1}} E_0(q,\mathbf{a}') & \ll & (C_1 C_2 \tau(q) + q) 2^{\omega(q)} E_1(q) \textrm{,}
\end{eqnarray*}
which completes the proof.
\end{proof}

\subsection{Arithmetic functions}

\label{arithmetic section}

We now introduce several arithmetic functions which will appear along the proof of theorem \ref{Manin}. We set

\begin{eqnarray}
\label{varphi}
\varphi^{\ast}(n) & = & \prod_{p|n} \left( 1 - \frac1{p} \right) \textrm{,} \\
\label{diamond}
\varphi^{\curlyvee}(n) & = & \prod_{p|n} \left( 1 - \frac1{p} \right)^{-2} \left( 1 + \frac{2}{p} \right)^{-1} \textrm{,}
\end{eqnarray}
and also, for $a, b \in \mathbb{Z}_{\geq 1}$,
\begin{eqnarray}
\label{checkmark}
\psi_a(n) & = & \prod_{\substack{p|n \\ p \nmid a}} \left( 1 - \frac1{p} \right)^2 \left( 1 - \frac1{p-1} \right) \textrm{,}
\end{eqnarray}
and
\begin{eqnarray*}
\psi_{a,b}(n) & = &
\begin{cases}
\psi_a(n) & \textrm{ if } \gcd(n,b) = 1 \textrm{,} \\
0 & \textrm{ otherwise.}
\end{cases}
\end{eqnarray*}

Following the straightforward reasonings of the proofs of \cite[Lemmas $5$, $6$]{MR2853047}, we easily obtain the following result.

\begin{lemma}
\label{arithmetic preliminary 1}
Let $0 < \gamma \leq 1$ be fixed. Let $0 \leq t_1 < t_2$ and set $I = [t_1,t_2]$. Let $g : \mathbb{R}_{> 0} \to \mathbb{R}$ be a function having a piecewise continuous derivative on $I$ whose sign changes at most $R_g(I)$ times on $I$. We have
\begin{eqnarray*}
\sum_{n \in I \cap \mathbb{Z}_{>0}} \psi_{a,b}(n) g(n) & = & \Upsilon \Psi(a,b) \int_I g(t) \D t + O_{\gamma} \left( \sigma_{- \gamma/2}(ab) t_2^{\gamma} M_I(g) \right) \textrm{,}
\end{eqnarray*}
where
\begin{eqnarray}
\label{Upsilon}
\Upsilon & = &  \prod_{p} \varphi^{\curlyvee}(p)^{-1} \textrm{,} \\
\notag
\Psi(a,b) & = & \varphi^{\ast}(b) \varphi^{\curlyvee}(ab) \textrm{,}
\end{eqnarray}
and
\begin{eqnarray*}
M_I(g) & = & (1 + R_g(I)) \sup_{t \in I \cap \mathbb{R}_{> 0}} |g(t)| \textrm{.}
\end{eqnarray*}
\end{lemma}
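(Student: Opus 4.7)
The plan is to write $\psi_{a,b} = 1 \ast h$ as a Dirichlet convolution via M\"obius inversion, swap the order of summation, and approximate the resulting inner sums by integrals. By multiplicativity and the fact that $\psi_{a,b}(p^k) = \psi_{a,b}(p)$ for every $k \geq 1$, the function $h = \mu \ast \psi_{a,b}$ is multiplicative with $h(p^k) = 0$ whenever $k \geq 2$. At primes one computes directly that $h(p) = -1$ if $p \mid b$, $h(p) = 0$ if $p \mid a$ and $p \nmid b$, and $h(p) = \psi_a(p) - 1 = (2 - 3p)/p^2$ if $p \nmid ab$. Consequently, $h$ is supported on squarefree integers $d = d_1 d_2$ with $d_1 \mid b$ and $\gcd(d_2, ab) = 1$, and one has $|h(d)| \ll 3^{\omega(d_2)}/d_2$ throughout this support.

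With this decomposition I would swap the order of summation to obtain
\[
\sum_{n \in I \cap \mathbb{Z}_{>0}} \psi_{a,b}(n) g(n) \;=\; \sum_{d \geq 1} h(d) \sum_{\substack{m \in \mathbb{Z}_{>0} \\ md \in I}} g(md),
\]
and, since $g$ has a piecewise continuous derivative whose sign changes at most $R_g(I)$ times on $I$, partial summation on each monotonic piece yields
\[
\sum_{m : md \in I} g(md) \;=\; \frac{1}{d} \int_I g(t)\,\D t \;+\; O\bigl(M_I(g)\bigr).
\]
I would then truncate the $d$-sum at a parameter $D \asymp t_2^{\gamma}$, use this approximation for $d \leq D$, and control the range $d > D$ using the trivial bound $\sum_{m : md \in I} |g(md)| \ll (t_2/d + 1)\,\sup_I |g|$. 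This extracts the candidate main term $\bigl(\sum_{d \geq 1} h(d)/d\bigr) \int_I g(t)\,\D t$.

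The constant $\sum_{d \geq 1} h(d)/d$ factors as the Euler product $\prod_p \bigl(1 + h(p)/p\bigr)$ thanks to the squarefree support of $h$. Splitting primes into the three cases above and invoking the elementary identity $(1 - 1/p)^2 (1 + 2/p) = 1 + (2 - 3p)/p^3$, one checks that
\[
\prod_p \Bigl(1 + \frac{h(p)}{p}\Bigr) \;=\; \prod_{p \nmid ab} \varphi^{\curlyvee}(p)^{-1} \cdot \prod_{p \mid b} \varphi^{\ast}(p) \;=\; \Upsilon \, \Psi(a,b),
\]
which is the stated leading coefficient. The error is built out of three pieces: the $O(M_I(g))$ discrepancy from the Euler--Maclaurin step, summed against $|h(d)|$ for $d \leq D$; the tail $d > D$ of the main term; and the trivial-bound contribution for $d > D$. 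The main obstacle, and the point where I would follow the divisor-style bookkeeping in the proofs of \cite[Lemmas~$5$, $6$]{MR2853047}, is to collapse these sources, using $|h(d)| \ll 3^{\omega(d_2)}/d_2$ together with tail estimates of the shape $\sum_{d > D} |h(d)|/d^{1-\gamma} \ll D^{-\gamma}\,\sigma_{-\gamma/2}(ab)$, into the claimed form $\sigma_{-\gamma/2}(ab)\, t_2^{\gamma}\, M_I(g)$; the arithmetic factor $\sigma_{-\gamma/2}(ab)$ emerges precisely from the $a$- and $b$-constrained prime support of $h$.
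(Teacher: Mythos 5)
Your decomposition $\psi_{a,b}=1\ast h$ with $h=\mu\ast\psi_{a,b}$, the evaluation of $h$ at prime powers, and the identification of $\sum_{d\geq1}h(d)/d=\prod_p\left(1+h(p)/p\right)$ with $\Upsilon\Psi(a,b)$ are all correct, and this is exactly the argument behind the reference to \cite[Lemmas~$5$, $6$]{MR2853047} on which the paper relies; the main term is therefore in order.

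The error analysis as you have set it up, however, does not deliver the stated bound, and the culprit is the truncation point $D\asymp t_2^{\gamma}$. After truncating, both the trivial bound on the range $d>D$ and the completion of the main term leave a contribution of size $t_2\sup_I|g|\sum_{d>D}|h(d)|/d$. Since $\sum_d|h(d)|$ diverges (the local factors at $p\nmid ab$ behave like $1+3/p$), the tail $\sum_{d>D}|h(d)|/d$ decays no faster than $D^{-1+o(1)}$, so this contribution has order $t_2^{1-\gamma+o(1)}$, which exceeds the target $t_2^{\gamma}$ for every $\gamma<1/2$ --- and the lemma must hold for all $0<\gamma\leq1$. A second, independent problem is that the pointwise bound $|h(d)|\ll 3^{\omega(d_2)}/d_2$ controls $\sum_{d\leq D}|h(d)|$ only up to a factor $2^{\omega(b)}$ coming from the sum over $d_1\mid\rad(b)$, and $2^{\omega(b)}$ is not dominated by $\sigma_{-\gamma/2}(ab)$ (take $b$ a product of many large primes). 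Both issues are repaired by cutting at $d\leq t_2$ instead --- for $d>t_2$ the inner sum over $m$ is empty, so only the completed main term needs estimating there --- and by weighting rather than bounding pointwise:
\[
\sum_{d\leq t_2}|h(d)|\leq t_2^{\gamma}\sum_{d\geq1}\frac{|h(d)|}{d^{\gamma}}\textrm{,}\qquad
\sum_{d>t_2}\frac{|h(d)|}{d}\leq t_2^{\gamma-1}\sum_{d\geq1}\frac{|h(d)|}{d^{\gamma}}\textrm{,}
\]
together with
\[
\sum_{d\geq1}\frac{|h(d)|}{d^{\gamma}}=\prod_{p\mid b}\Bigl(1+\frac1{p^{\gamma}}\Bigr)\prod_{p\nmid ab}\Bigl(1+\frac{3p-2}{p^{2+\gamma}}\Bigr)\ll_{\gamma}\prod_{p\mid b}\Bigl(1+\frac1{p^{\gamma/2}}\Bigr)\leq\sigma_{-\gamma/2}(ab)\textrm{,}
\]
which is precisely where the exponent $\gamma/2$ in the statement comes from. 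With these two changes your argument closes and yields the lemma as stated.
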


\section{The universal torsor}

\label{torsor section}

In this section we define a bijection between the set of rational points of bounded height on $U$ and a certain set of integral points on the hypersurface defined in the introduction. The universal torsor corresponding to our present problem has first been determined by Hassett and Tschinkel \cite{MR2029868} and then, it has been used by Browning in \cite{MR2250046} to prove the lower and upper bounds of the expected order of magnitude for $N_{U,H}(B)$. We employ the notation used by Derenthal in
\cite{D-hyper}. Let $\mathcal{T}(B)$ be the set of $(\eta_1, \dots, \eta_{10}) \in \mathbb{Z}_{>0}^7 \times \mathbb{Z}_{\neq 0}^3$ satisfying the equation
\begin{eqnarray}
\label{torsor}
\eta_2 \eta_5^2 \eta_8 + \eta_3 \eta_6^2 \eta_9 + \eta_4 \eta_7^2 \eta_{10} - \eta_1 \eta_2 \eta_3 \eta_4 \eta_5 \eta_6 \eta_7 & = & 0 \textrm{,}
\end{eqnarray}
the coprimality conditions
\begin{eqnarray}
\label{gcd1}
& & \gcd(\eta_{10}, \eta_1 \eta_2 \eta_3 \eta_4 \eta_5 \eta_6) = 1 \textrm{,} \\
\label{gcd2}
& & \gcd(\eta_9, \eta_1 \eta_2 \eta_3 \eta_4 \eta_5 \eta_7) = 1 \textrm{,} \\
\label{gcd3}
& & \gcd(\eta_8, \eta_1 \eta_2 \eta_3 \eta_4 \eta_6 \eta_7) = 1 \textrm{,} \\
\label{gcd4}
& & \gcd(\eta_1, \eta_5 \eta_6 \eta_7) = 1 \textrm{,} \\
\label{gcd5}
& & \gcd(\eta_2 \eta_5, \eta_3 \eta_4 \eta_6 \eta_7) = 1 \textrm{,} \\
\label{gcd6}
& & \gcd(\eta_3 \eta_6, \eta_4 \eta_7) = 1 \textrm{,}
\end{eqnarray}
and the height conditions
\begin{eqnarray}
\label{height1}
|\eta_8 \eta_9 \eta_{10}| & \leq & B \textrm{,} \\
\label{height2}
\eta_1^2 \eta_2^2 \eta_3 \eta_4 \eta_5^2 |\eta_8| & \leq & B \textrm{,} \\
\label{height3}
\eta_1^2 \eta_2 \eta_3^2 \eta_4 \eta_6^2 |\eta_9| & \leq & B \textrm{,} \\
\label{height4}
\eta_1^2 \eta_2 \eta_3 \eta_4^2 \eta_7^2 |\eta_{10}| & \leq & B \textrm{.}
\end{eqnarray}
We now prove the following lemma.

\begin{lemma}
\label{T}
We have the equality
\begin{eqnarray*}
N_{U,H}(B) & = & \# \mathcal{T}(B) \textrm{.}
\end{eqnarray*}
\end{lemma}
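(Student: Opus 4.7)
The plan is to exhibit an explicit bijection between $\{ P \in U(\mathbb{Q}) : H(P) \leq B\}$ and $\mathcal{T}(B)$, following the classical universal torsor descent. Given $P \in U(\mathbb{Q})$, fix its primitive integer representative $(x_0, x_1, x_2, x_3)$, which is unique up to a simultaneous sign change that I will absorb by the positivity convention on $\eta_1, \ldots, \eta_7$. Setting $y_0 := x_1 + x_2 + x_3$, the equation of $V$ reads $x_0 y_0^2 = x_1 x_2 x_3$, and the hypothesis $P \in U$ guarantees that $x_0, x_1, x_2, x_3, y_0$ are all nonzero.

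The forward map is built by successive extraction of greatest common divisors, leading to the parametrization
\begin{eqnarray*}
x_0 & = & \eta_8 \eta_9 \eta_{10}, \\
x_1 & = & \eta_1^2 \eta_2^2 \eta_3 \eta_4 \eta_5^2 \eta_8, \\
x_2 & = & \eta_1^2 \eta_2 \eta_3^2 \eta_4 \eta_6^2 \eta_9, \\
x_3 & = & \eta_1^2 \eta_2 \eta_3 \eta_4^2 \eta_7^2 \eta_{10},
\end{eqnarray*}
whence $y_0 = \eta_1^3 \eta_2^2 \eta_3^2 \eta_4^2 \eta_5 \eta_6 \eta_7$ is forced by $x_0 y_0^2 = x_1 x_2 x_3$, while the torsor equation \eqref{torsor} becomes a direct translation of $x_1 + x_2 + x_3 = y_0$ after dividing out the common factor $\eta_1^2 \eta_2 \eta_3 \eta_4$. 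To verify that this yields integers with the prescribed coprimality structure, I would work one prime at a time, using the identity $v_p(x_0) + 2 v_p(y_0) = v_p(x_1) + v_p(x_2) + v_p(x_3)$ together with the primitivity of the representative, in order to pin down the $p$-adic valuations of each $\eta_i$. The six coprimality conditions \eqref{gcd1}--\eqref{gcd6} then hold by construction, and the height conditions \eqref{height1}--\eqref{height4} are precisely $|x_i| \leq B$ for $i = 0, 1, 2, 3$ rewritten in the new variables.

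The inverse map is the substitution itself: given $(\eta_1, \ldots, \eta_{10}) \in \mathcal{T}(B)$, define the $x_i$ by the displayed formulas. The routine verifications are that these satisfy the surface equation (upon using \eqref{torsor}), that $\gcd(x_0, x_1, x_2, x_3) = 1$ (a brief case analysis on prime divisors using \eqref{gcd1}--\eqref{gcd6}), that $H(P) \leq B$ (by \eqref{height1}--\eqref{height4}), and that $P \in U$ (all $\eta_i$ being nonzero excludes the six lines defined by $x_0 = x_i = 0$ and $y_0 = x_i = 0$). The main obstacle is the bookkeeping in the forward direction, where one must check that the iterated gcd extractions produce factors of exactly the prescribed shape and that all six coprimality conditions hold simultaneously. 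This computation has already been carried out by Hassett and Tschinkel in \cite{MR2029868} and was exploited by Browning in \cite{MR2250046} to establish the correct order of magnitude for $N_{U,H}(B)$, and so the proof can proceed by appealing to their parametrization and verifying that our normalization matches the set $\mathcal{T}(B)$ above.
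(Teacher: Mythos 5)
There is a genuine gap, and it sits exactly where you wave your hands at the end. The parametrization established by Hassett--Tschinkel and used by Browning is \emph{not} the set $\mathcal{T}(B)$ of this paper: in their normalization the condition \eqref{gcd4}, $\gcd(\eta_1,\eta_5\eta_6\eta_7)=1$, is absent and is replaced by the squarefreeness condition $|\mu(\eta_2\eta_3\eta_4)|=1$. So your closing step, ``appealing to their parametrization and verifying that our normalization matches the set $\mathcal{T}(B)$ above,'' is not a routine check of identical conditions: the normalizations do not match, and reconciling them is precisely the content of the lemma. Likewise, your claim that the six conditions \eqref{gcd1}--\eqref{gcd6} ``hold by construction'' cannot be right as stated for a gcd-extraction argument modelled on their descent, because that descent naturally forces $\eta_2,\eta_3,\eta_4$ squarefree rather than \eqref{gcd4}; with \eqref{gcd4} in force, $\eta_2,\eta_3,\eta_4$ may carry square factors, and one has to explain where those squares go and why the resulting representative of a rational point is unique. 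Your one-prime-at-a-time valuation count ($v_p(x_0)+2v_p(y_0)=v_p(x_1)+v_p(x_2)+v_p(x_3)$ plus primitivity) does not by itself pin down ten valuations per prime, so neither existence nor uniqueness of the decomposition onto $\mathcal{T}(B)$ is actually established in your sketch.

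The paper's proof avoids redoing the descent altogether: it quotes Browning's set $\mathcal{T}'(B)$ (identical to $\mathcal{T}(B)$ except that \eqref{gcd4} is replaced by $|\mu(\eta_2\eta_3\eta_4)|=1$, for which $N_{U,H}(B)=\#\mathcal{T}'(B)$ is known) and exhibits an explicit bijection: write $\eta_i=\eta_i'\eta_i''^2$ with $\eta_i'$ squarefree for $i=2,3,4$, and set $\eta_1'=\eta_1\eta_2''\eta_3''\eta_4''$, $\eta_{i+3}'=\eta_{i+3}\eta_i''$. One checks that the torsor equation \eqref{torsor}, the height conditions \eqref{height1}--\eqref{height4}, and the conditions \eqref{gcd1}, \eqref{gcd2}, \eqref{gcd3}, \eqref{gcd5}, \eqref{gcd6} are preserved, while \eqref{gcd4} is traded for the squarefreeness of $\eta_2'\eta_3'\eta_4'$. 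To repair your argument you should either carry out this comparison map (the short route), or genuinely redo the descent with the Derenthal-style coprimality conditions and prove existence and uniqueness of the representative in $\mathcal{T}(B)$ prime by prime --- but the latter is substantially more work than your sketch acknowledges.
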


\begin{proof}
It is sufficient to show that the counting problem defined by the set $\mathcal{T}(B)$ is equivalent to the one described in the work of Browning \cite[Section $4$]{MR2250046}, which we call $\mathcal{T}'(B)$ and which is defined exactly as $\mathcal{T}(B)$ except that the condition  \eqref{gcd4} is replaced by the condition $|\mu(\eta_2 \eta_3 \eta_4)| = 1$.

For $i=2,3,4$, there is only one way to write $\eta_i = \eta_i' \eta_i''^2$ requiring that $\eta_i'$ is squarefree. Setting $\eta_{i+3}' = \eta_{i+3} \eta_i''$ and $\eta_1' = \eta_1 \eta_2'' \eta_3'' \eta_4''$, we claim that the translation between the two counting problems is achieved via the map
\begin{eqnarray*}
S & : & (\eta_1, \eta_2, \eta_3, \eta_4, \eta_5, \eta_6, \eta_7) \mapsto (\eta_1', \eta_2', \eta_3', \eta_4', \eta_5', \eta_6', \eta_7') \textrm{.}
\end{eqnarray*}
Indeed, the equation \eqref{torsor} and the height conditions \eqref{height1}, \eqref{height2}, \eqref{height3} and \eqref{height4} are invariant under $S$. Moreover, the coprimality conditions \eqref{gcd1}, \eqref{gcd2}, \eqref{gcd3}, \eqref{gcd5} and \eqref{gcd6}  are preserved under $S$, and the condition \eqref{gcd4} is replaced by the condition $|\mu(\eta_2' \eta_3' \eta_4')| = 1$, which completes the proof.
\end{proof}

\section{Calculation of Peyre's constant}

In \cite{MR1340296}, Peyre gives an interpretation for the constant $c_{V,H}$ appearing in the main term of $N_{U,H}(B)$ in theorem \ref{Manin}. In our specific case, we have
\begin{eqnarray*}
c_{V,H} & = & \alpha(\widetilde{V}) \beta(\widetilde{V}) \omega_H(\widetilde{V}) \textrm{,}
\end{eqnarray*}
where $\widetilde{V}$ denotes the minimal desingularization of $V$. The definitions of these three quantities are omitted (the reader should refer to \cite{MR1340296} or to \cite[Section  $4$]{2A2+A1} for some more details in an identical setting). Using the work of Derenthal, Joyce and Teitler
\cite[Theorem $1$.$3$]{MR2377367}, it is easy to compute the constant $\alpha(\widetilde{V})$. We find
\begin{eqnarray*}
\alpha(\widetilde{V}) & = & \frac1{120} \cdot \frac1{\# W(\mathbf{D}_4)} \\
& = & \frac1{23040} \textrm{,}
\end{eqnarray*}
where $W(\mathbf{D}_4)$ stands for the Weyl group associated to the Dynkin diagram of the singularity $\mathbf{D}_4$. Here, we have used
$\# W(\mathbf{D}_n) = 2^{n-1} n!$ for any $n \geq 4$. In addition, $\beta(\widetilde{V}) = 1$ since $V$ is split over $\mathbb{Q}$. Finally, $\omega_H(\widetilde{V})$ is given by
\begin{eqnarray*}
\omega_H(\widetilde{V}) & = & \omega_{\infty} \prod_p \left( 1 - \frac1{p} \right)^{7} \omega_p \textrm{,}
\end{eqnarray*}
where $\omega_{\infty}$ and $\omega_p$ are respectively the archimedean and $p$-adic densities. Loughran \cite[Lemma 2.3]{MR2769338} has shown that we have
\begin{eqnarray*}
\omega_p & = & 1 + \frac{7}{p} + \frac1{p^2} \textrm{.}
\end{eqnarray*}
Let us calculate $\omega_{\infty}$. Let $\mathbf{x} = (x_0,x_1,x_2,x_3)$ and $f(\mathbf{x}) = x_0(x_1+x_2+x_3)^2 - x_1x_2x_3 $. We parametrize the points of $V$ with $x_1$, $x_2$ and $x_3$. We have
\begin{eqnarray*}
\frac{\partial f}{\partial x_0}(\mathbf{x}) & = & (x_1+x_2+x_3)^2 \textrm{,}
\end{eqnarray*}
and since $\mathbf{x} = - \mathbf{x} \in \mathbb{P}^3$, we obtain
\begin{eqnarray*}
\omega_{\infty} & = & \frac1{2} \int \int \int_{\left| x_1 x_2 x_3 \right| / (x_1+x_2+x_3)^2, |x_1|, |x_2|, |x_3| \leq 1} \
\frac{\D x_1 \D x_2 \D x_3}{(x_1+x_2+x_3)^2} \textrm{.}
\end{eqnarray*}
Recall the definition \eqref{equation h} of the function $h$. The change of variables defined by $x_1 = t^2x$, $x_2 = t^2y$ and $x_3 = - t^2(x + y - t)$ yields
\begin{eqnarray}
\notag
\omega_{\infty} & = & \frac{3}{2} \int \int \int_{h(x,y,t) \leq 1} \D x \D y \D t  \\
\label{omega}
& = & 3 \int \int \int_{t>0, h(x,y,t) \leq 1} \D x \D y \D t  \textrm{.}
\end{eqnarray}

\section{Proof of the main theorem}

\label{proof section}

\subsection{Restriction of the domain}

\label{restriction}

Note that, in the torsor equation \eqref{torsor}, the first three terms are less than $B / \eta_1^2 \eta_2 \eta_3 \eta_4$ (by the height conditions \eqref{height2}, \eqref{height3}  and \eqref{height4}), and thus we have
\begin{eqnarray*}
\eta_1^3 \eta_2^2 \eta_3^2 \eta_4^2 \eta_5 \eta_6 \eta_7 & \leq & 3B \textrm{.}
\end{eqnarray*}

From now on, for $n \in \mathbb{Z}_{\geq 1}$, we denote by $\sq(n)$ the unique positive integer such that $\sq(n)^2 | n$ and $n / \sq(n)^2$ is squarefree. Note that for two coprime integers $m,n  \in \mathbb{Z}_{\geq 1}$, we have $\sq(mn) = \sq(m)\sq(n)$.

We now need to show that we can assume along the proof that
\begin{eqnarray}
\label{assumption1}
\eta_1 \sq(\eta_2 \eta_3 \eta_4) & \geq & B^{15 / \log(\log(B))} \textrm{,}
\end{eqnarray}
and, in addition, that
\begin{eqnarray}
\label{assumption2}
\eta_1^3 \eta_2^2 \eta_3^2 \eta_4^2 \eta_5 \eta_6 \eta_7 & \geq & \frac{B}{\log(\log(B))} \textrm{.}
\end{eqnarray}

The proof of lemma \ref{T} shows that we can make use of the estimates of Browning given in \cite[Section~$6$]{MR2250046} to prove that the contributions to $N_{U,H}(B)$ coming from those $(\eta_1, \dots ,\eta_{10}) \in \mathcal{T}(B)$ which do not satisfy one of the two inequalities \eqref{assumption1} and \eqref{assumption2} are actually negligible.

We start by proving the following lemma.

\begin{lemma}
\label{lemmaloglog1}
Let $\mathcal{M}(B)$ be the overall contribution to $N_{U,H}(B)$ coming from those $(\eta_1, \dots ,\eta_{10}) \in \mathcal{T}(B)$ such that
$\eta_1 \sq(\eta_2 \eta_3 \eta_4) \leq B^{15 / \log(\log(B))}$. We have
\begin{eqnarray*}
\mathcal{M}(B) & \ll & \frac{B \log(B)^6}{\log(\log(B))} \textrm{.}
\end{eqnarray*}
\end{lemma}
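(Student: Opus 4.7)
The plan is to apply directly Browning's upper-bound estimates from \cite[Section~$6$]{MR2250046}, to which the author explicitly refers. By lemma \ref{T} the count $N_{U,H}(B)$ coincides with $\#\mathcal{T}(B)$, and the idea is to trace through Browning's derivation of the upper bound $N_{U,H}(B) \ll B\log(B)^6$ under the additional constraint $\eta_1 \sq(\eta_2\eta_3\eta_4) \leq Y$, where $Y := B^{15/\log(\log(B))}$.

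First I would decompose $\eta_i = s_i^2 m_i$ for $i \in \{2,3,4\}$ with $m_i$ squarefree, so that $\sq(\eta_2\eta_3\eta_4) = s_2 s_3 s_4$, using the coprimality conditions \eqref{gcd5} and \eqref{gcd6} to guarantee pairwise coprimality of the six integers $s_2, s_3, s_4, m_2, m_3, m_4$. The constraint then reads $\eta_1 s_2 s_3 s_4 \leq Y$. In Browning's argument, the factor $\log(B)^6$ arises from the sum over $(\eta_1,\dots,\eta_7)$ weighted by divisor-type contributions coming from the count of admissible $(\eta_8,\eta_9,\eta_{10})$; crucially, among these six logarithmic factors, four arise from the variables $\eta_1$ and $\sq(\eta_i)$ for $i \in \{2,3,4\}$, i.e.\ from $\eta_1$ and the $s_i$'s.

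The decisive step is then to dyadically decompose the range of the product $\eta_1 s_2 s_3 s_4$ into $O(\log Y)$ dyadic intervals, and to apply Browning's estimate separately on each. Since restricting $\eta_1 s_2 s_3 s_4$ to a single dyadic interval effectively removes one $\log B$ factor from Browning's bound, each dyadic piece contributes at most $O(B \log(B)^5)$. Summing over the $O(\log Y)$ pieces gives
\begin{eqnarray*}
\mathcal{M}(B) & \ll & \log(Y) \cdot B \log(B)^5 \;\ll\; \frac{B \log(B)^6}{\log(\log(B))} \textrm{,}
\end{eqnarray*}
since $\log(Y) = 15 \log(B)/\log(\log(B))$, which is the required bound.

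The main obstacle is verifying that Browning's pointwise bound on $(\eta_8,\eta_9,\eta_{10})$ can be re-summed so as to save exactly one logarithm when $\eta_1 s_2 s_3 s_4$ is restricted to a dyadic range, and that the divisor factors together with the coprimality conditions \eqref{gcd1}--\eqref{gcd6} do not cause additional logarithmic losses. Both are standard but require careful bookkeeping; in practice, one would isolate the sum over $(\eta_1, s_2, s_3, s_4)$ at the outer level of Browning's argument, perform the inner sums over $(m_2, m_3, m_4, \eta_5, \eta_6, \eta_7, \eta_8, \eta_9, \eta_{10})$ uniformly in these outer variables, and only then invoke the range restriction to extract the desired saving.
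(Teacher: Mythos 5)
Your approach is essentially the paper's: the substitution $\eta_i = s_i^2 m_i$, combined with the passage to Browning's counting problem via the map $S$ from the proof of lemma \ref{T}, turns the constraint into $\eta_1' \leq B^{15/\log(\log(B))}$ on the single variable $\eta_1' = \eta_1 \sq(\eta_2\eta_3\eta_4)$, and the paper then runs exactly your dyadic argument, the point being that the dyadic parameter $Y_1$ attached to $\eta_1'$ ranges over only $O(\log(B)/\log(\log(B)))$ values. The ``careful bookkeeping'' you defer is the actual body of the paper's proof: it rewrites Browning's upper bound as three dyadic sums and chooses the order of summation in each (using the height conditions to eliminate $Y_5$, $Y_6$, $Y_7$, $Y_8$, and so on, with a small case analysis for the third sum) precisely so that $Y_1$ survives among the six free dyadic parameters, which is what justifies your claim that fixing a dyadic range for $\eta_1 s_2 s_3 s_4$ saves one logarithm.
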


\begin{proof}
Recall the notation introduced in the proof of lemma \ref{T}. We note that the condition $\eta_1 \sq(\eta_2 \eta_3 \eta_4) \leq B^{15 / \log(\log(B))}$ is equivalent to
$\eta_1'  \leq B^{15 / \log(\log(B))}$.

For $i = 1, \dots, 10$, we let $Y_i$ be variables running over the set $\{ 2^n, n\geq -1\}$. By counting the number of $(\eta_1', \dots, \eta_{10}') \in \mathcal{T}'(B)$ which are subject to $Y_i < |\eta_i'| \leq 2Y_i$, for $i = 1, \dots, 10$, using the work of Browning \cite[Sections $6.1$, $6.2$]{MR2250046}, we obtain that
\begin{eqnarray}
\label{M(B)}
\ \ \ \ \ \ \ \mathcal{M}(B) & \ll & B \log(B)^5 + \sum_{Y_i} X_0^{1/2} X_1^{1/6}  X_2^{1/6}  X_3^{1/6} + \sum_{Y_i} X_0^{1/4} X_1^{1/4}  X_2^{1/4}  X_3^{1/4} \\
\notag
& & + \sum_{Y_i} \max_{\{i,j,k\} = \{2,3,4\}} \left\{
\frac{Y_1 Y_2 Y_3 Y_4 Y_5 Y_6 Y_7 Y_8 Y_9 Y_{10}}{Y_{k+6} \max \left\{ Y_i Y_{i+3}^2 Y_{i+6}, Y_j Y_{j+3}^2 Y_{j+6}, Z_k \right\}} \right\} \textrm{,}
\end{eqnarray}
where the three sums are over the $Y_i$, $i = 1, \dots, 10$, subject to the inequalities
\begin{eqnarray}
\label{heighta}
Y_8 Y_9 Y_{10} & \leq & B \textrm{,} \\
\label{heightb}
Y_1^2 Y_2^2 Y_3 Y_4 Y_5^2 Y_8 & \leq & B \textrm{,} \\
\label{heightc}
Y_1^2 Y_2 Y_3^2 Y_4 Y_6^2 Y_9 & \leq & B \textrm{,} \\
\label{heightd}
Y_1^2 Y_2 Y_3 Y_4^2 Y_7^2 Y_{10} & \leq & B \textrm{,}
\end{eqnarray}
and also
\begin{eqnarray}
\label{extraY_1}
Y_1 & \leq & B^{15 / \log(\log(B))} \textrm{,}
\end{eqnarray}
and where $X_0$, $X_1$, $X_2$, $X_3$ respectively denote the left-hand sides of the inequalities \eqref{heighta}, \eqref{heightb}, \eqref{heightc} and \eqref{heightd} and finally, for $k \in \{2,3,4\}$, $Z_k$ is defined by
\begin{eqnarray*}
Z_k & = &
\begin{cases}
Y_k Y_{k+3}^2 Y_{k+6} & \textrm{ if }  Y_k Y_{k+3}^2 Y_{k+6} \geq Y_1 Y_2 Y_3 Y_4 Y_5 Y_6 Y_7 \textrm{,} \\
1 & \textrm{ otherwise.}
\end{cases}
\end{eqnarray*}

Let us denote by $\mathcal{N}_1(B)$, $\mathcal{N}_2(B)$ and $\mathcal{N}_3(B)$ the respective contributions of the three sums in \eqref{M(B)}. In the following estimations, the notation $\sum_{\widehat{Y_j}}$ indicates that the summation is over all the $Y_i$ with $i \neq j$. We start by investigating the quantity $\mathcal{N}_1(B)$ by summing over $Y_5$, $Y_6$ and $Y_7$ using respectively the conditions \eqref{heightb}, \eqref{heightc} and \eqref{heightd}. We get
\begin{eqnarray*}
\mathcal{N}_1(B) & = & \sum_{Y_i} Y_1 Y_2^{2/3} Y_3^{2/3} Y_4^{2/3} Y_5^{1/3} Y_6^{1/3} Y_7^{1/3} Y_8^{2/3} Y_9^{2/3} Y_{10}^{2/3}  \\
& \ll & B^{1/2} \sum_{\widehat{Y_5}, \widehat{Y_6}, \widehat{Y_7}} Y_8^{1/2} Y_9^{1/2} Y_{10}^{1/2}  \\
& \ll & B \sum_{\widehat{Y_5}, \widehat{Y_6}, \widehat{Y_7}, \widehat{Y_8}} 1 \\
& \ll & \frac{B \log(B)^6}{\log(\log(B))}  \textrm{,}
\end{eqnarray*}
where we have used the condition \eqref{heighta} to sum over $Y_8$ and the condition \eqref{extraY_1} to sum over $Y_1$.  We now deal with the quantity $\mathcal{N}_2(B)$ in a similar fashion. We use exactly the same order of summation and the same inequalities. We obtain
\begin{eqnarray*}
\mathcal{N}_2(B) & = & \sum_{Y_i} Y_1^{3/2} Y_2 Y_3 Y_4 Y_5^{1/2} Y_6^{1/2} Y_7^{1/2} Y_8^{1/2} Y_9^{1/2} Y_{10}^{1/2}  \\
& \ll & B^{3/4} \sum_{\widehat{Y_5}, \widehat{Y_6}, \widehat{Y_7}} Y_8^{1/4} Y_9^{1/4} Y_{10}^{1/4}  \\
& \ll & B \sum_{\widehat{Y_5}, \widehat{Y_6}, \widehat{Y_7}, \widehat{Y_8}} 1 \\
& \ll & \frac{B \log(B)^6}{\log(\log(B))}  \textrm{.}
\end{eqnarray*}
We now deal with the quantity $\mathcal{N}_3(B)$. We only treat the case where $(i,j,k)=(2,3,4)$ since they are all identical. Note that if $Z_4 = Y_4 Y_7^2 Y_{10}$ then $\mathcal{N}_3(B) \leq \mathcal{N}_1(B)$. Thus, we only need to deal with the case where $Z_4 = 1$. In addition, we proceed without loss of generality under the assumption that $Y_2 Y_5^2 Y_8 \leq Y_3 Y_6^2 Y_9$. We first use this condition to sum over $Y_5$ and then we sum over $Y_7$ and $Y_8$ using respectively the conditions \eqref{heightd} and \eqref{heighta}. We get
\begin{eqnarray*}
\mathcal{N}_3(B) & \ll & \sum_{Y_i} Y_1 Y_2 Y_4 Y_5Y_6^{-1} Y_7 Y_8  \\
& \ll & \sum_{\widehat{Y_5}} Y_1 Y_2^{1/2} Y_3^{1/2} Y_4 Y_7 Y_8^{1/2} Y_9^{1/2} \\
& \ll & B^{1/2} \sum_{\widehat{Y_5}, \widehat{Y_7}}  Y_8^{1/2} Y_9^{1/2} Y_{10}^{-1/2} \\
& \ll & B \sum_{\widehat{Y_5}, \widehat{Y_7}, \widehat{Y_8}} Y_{10}^{-1} \\
& \ll & \frac{B \log(B)^6}{\log(\log(B))} \textrm{,}
\end{eqnarray*}
which completes the proof of lemma \ref{lemmaloglog1}.
\end{proof}

The following lemma proves that the contribution to $N_{U,H}(B)$ coming from those $(\eta_1, \dots ,\eta_{10}) \in \mathcal{T}(B)$ which are subject to the stronger condition
\begin{eqnarray*}
\eta_1^3 \eta_2^2 \eta_3^2 \eta_4^2 \eta_5 \eta_6 \eta_7 & \leq & \frac{B}{\log(\log(B))} \textrm{,}
\end{eqnarray*}
is negligible.

\begin{lemma}
\label{lemmaloglog2}
Let $\mathcal{M}'(B)$ be the overall contribution to $N_{U,H}(B)$ coming from those $(\eta_1, \dots ,\eta_{10}) \in \mathcal{T}(B)$ such that
\begin{eqnarray*}
\eta_1^3 \eta_2^2 \eta_3^2 \eta_4^2 \eta_5 \eta_6 \eta_7 & \leq & \frac{B}{\log(\log(B))} \textrm{.}
\end{eqnarray*}
We have
\begin{eqnarray*}
\mathcal{M}'(B) & \ll & \frac{B \log(B)^6}{\log(\log(B))^{1/6}} \textrm{.}
\end{eqnarray*}
\end{lemma}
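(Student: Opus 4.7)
The plan is to mirror the proof of lemma \ref{lemmaloglog1}, using Browning's dyadic estimates but replacing the size restriction on $\eta_1$ by the constraint $T := \eta_1^3 \eta_2^2 \eta_3^2 \eta_4^2 \eta_5 \eta_6 \eta_7 \leq W$, where we write $W := B/\log\log(B)$. First I would observe that $T$ is invariant under the map $S$ from the proof of lemma \ref{T} (the exponents $\eta_i^{a_i}$ for $i=1,\ldots,7$ in $T$ transform symmetrically under the substitutions $\eta_i = \eta_i'\eta_i''^2$ and $\eta_{i+3}' = \eta_{i+3}\eta_i''$), so the hypothesis translates unchanged to the primed variables. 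I would then introduce dyadic parameters $Y_i$ for the $\eta_i'$ and split $\mathcal{M}'(B)$ into three contributions $\mathcal{N}_1'(B)$, $\mathcal{N}_2'(B)$, $\mathcal{N}_3'(B)$ arising from Browning's three pointwise estimates \cite[Sections~$6.1$, $6.2$]{MR2250046}, now restricted to dyadic tuples satisfying $T \leq W$.

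The crucial observation is that the summands in Browning's first two estimates factor cleanly as a power of $T$ times a power of $Y_8 Y_9 Y_{10}$: a direct calculation gives
\[
X_0^{1/2} X_1^{1/6} X_2^{1/6} X_3^{1/6} = T^{1/3}(Y_8 Y_9 Y_{10})^{2/3}, \qquad (X_0 X_1 X_2 X_3)^{1/4} = T^{1/2}(Y_8 Y_9 Y_{10})^{1/2}.
\]
Under $T \leq W$ we interpolate
\[
T^{1/3} \leq W^{1/6} T^{1/6}, \qquad T^{1/2} \leq W^{1/6} T^{1/3},
\]
extracting a factor of $W^{1/6}$ in each case. The resulting sums are then handled by the Browning-style chain of summations used in the proof of lemma \ref{lemmaloglog1}: one sums out $Y_5$, $Y_6$, $Y_7$ using the height conditions \eqref{heightb}, \eqref{heightc}, \eqref{heightd}, and finally $Y_8$ using \eqref{heighta}. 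A direct computation confirms that each chain produces a remaining bound of $B^{5/6}\log(B)^6$, hence
\[
\mathcal{N}_1'(B) + \mathcal{N}_2'(B) \ll W^{1/6} B^{5/6} \log(B)^6 = \frac{B \log(B)^6}{\log\log(B)^{1/6}}.
\]

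The treatment of $\mathcal{N}_3'(B)$ is the main obstacle, because in the problematic subcase isolated in lemma \ref{lemmaloglog1} (namely $Z_4 = 1$ and $Y_2 Y_5^2 Y_8 \leq Y_3 Y_6^2 Y_9$), the summand $Y_1 Y_2 Y_4 Y_5 Y_6^{-1} Y_7 Y_8$ does not factor as a clean power of $T$ times a power of $(Y_8 Y_9 Y_{10})$. Here I would combine the two available upper bounds for $Y_5$, namely $M_1 = (Y_3 Y_6^2 Y_9/(Y_2 Y_8))^{1/2}$ from the max condition and $M_2 = W/(Y_1^3 Y_2^2 Y_3^2 Y_4^2 Y_6 Y_7)$ from $T \leq W$, using a geometric-mean interpolation $Y_5 \leq M_1^{1-\beta} M_2^\beta$ with $\beta$ chosen so as to extract a factor $W^{1/6}$ while preserving enough positive exponents on the remaining variables for the subsequent summations via \eqref{heightd} and \eqref{heighta} to produce $B^{5/6}$ and leave the six remaining dyadic variables to contribute $\log(B)^6$. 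The main challenge is precisely this balancing: the interpolation exponent must be tuned carefully so that the leftover dyadic sums neither degenerate into convergent geometric series (which would waste the $\log(B)^6$ factor) nor fail to give $B^{5/6}$ in the $B$-power, and one should use the supplementary constraints coming from the assumption $Z_4 = 1$ (equivalently $Y_7 Y_{10} < Y_1 Y_2 Y_3 Y_5 Y_6$) to arbitrate between competing height conditions.
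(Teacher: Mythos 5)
Your treatment of $\mathcal{N}_1'(B)$ and $\mathcal{N}_2'(B)$ is correct, and in fact slightly cleaner than the paper's: the factorizations $X_0^{1/2}X_1^{1/6}X_2^{1/6}X_3^{1/6}=T^{1/3}(Y_8Y_9Y_{10})^{2/3}$ and $(X_0X_1X_2X_3)^{1/4}=T^{1/2}(Y_8Y_9Y_{10})^{1/2}$ together with $T\le W$ do extract $W^{1/6}$, and the chain $Y_5,Y_6,Y_7$ via \eqref{heightb}, \eqref{heightc}, \eqref{heightd} and then $Y_8$ via \eqref{heighta} leaves six variables with exponent zero, giving $B^{5/6}\log(B)^6$ (the paper instead combines \eqref{heighta} and \eqref{heightb} and sums $Y_8,Y_9,Y_{10}$ first, then $Y_7$ via \eqref{extra}). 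Your observation that $\eta_1^3\eta_2^2\eta_3^2\eta_4^2\eta_5\eta_6\eta_7$ is invariant under the map $S$ is also correct and worth making explicit.

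The gap is in $\mathcal{N}_3'(B)$, which you correctly identify as the hard term but do not actually resolve, and the mechanism you sketch fails. After summing $Y_5\le M_1^{1-\beta}M_2^{\beta}$ against the summand $Y_1Y_2Y_4Y_5Y_6^{-1}Y_7Y_8$, the exponents become $Y_1^{1-3\beta}Y_2^{1/2-3\beta/2}Y_3^{1/2-5\beta/2}Y_4^{1-2\beta}Y_6^{-2\beta}Y_7^{1-\beta}Y_8^{1/2+\beta/2}Y_9^{(1-\beta)/2}$ times $W^{\beta}$; summing $Y_7$ via \eqref{heightd} contributes $B^{(1-\beta)/2}$ and $Y_8$ via \eqref{heighta} contributes $B^{(1+\beta)/2}$, so the $\beta$-dependence in the $B$-power cancels and one is left with $W^{\beta}B$ times a product over the seven remaining dyadic variables, all of which now carry strictly negative exponents $(-2\beta,-\beta,-2\beta,-\beta,-2\beta,-\beta,-1)$. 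Hence the bound collapses to $O_{\beta}\bigl(B^{1+\beta}(\log\log B)^{-\beta}\bigr)$ for every fixed $\beta>0$, which is far too large, while $\beta=0$ gives $B\log(B)^6$ with no $\log\log$-saving; letting $\beta\to 0$ with $B$ forces $B^{\beta}\ll\log(B)^{O(1)}$ and kills the saving $(\log\log B)^{-\beta}$. Structurally, the $B^{\beta}$ cost of borrowing from $T\le W$ can only be repaid if a $W$-type (not $B$-type) condition is used later in the chain on a variable summed with positive exponent. That is exactly the paper's missing idea: since $Z_4=1$ gives $Y_4Y_7^2Y_{10}\le Y_1Y_2Y_3Y_4Y_5Y_6Y_7$ \eqref{Z}, combining this with \eqref{extra} yields $Y_1^2Y_2Y_3Y_4^2Y_7^2Y_{10}\le B/\log\log(B)$ \eqref{combinedZ}, a $\log\log$-strengthened form of \eqref{heightd} that involves neither $Y_5$ nor $Y_6$. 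One then sums $Y_5$ via the full max condition ($\beta=0$), $Y_8$ via \eqref{heighta}, and $Y_7$ via \eqref{combinedZ}, obtaining $B\log(B)^6/\log\log(B)^{1/2}$ for this term. Without this (or an equivalent) combination, your proof of the lemma is incomplete at precisely the step you flag as the main challenge.
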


\begin{proof}
We proceed as in the proof of lemma \ref{lemmaloglog1} and we use the same notations. We have
\begin{eqnarray}
\label{M'(B)}
\ \ \ \ \ \ \ \mathcal{M}'(B) & \ll & B \log(B)^5 + \sum_{Y_i} X_0^{1/2} X_1^{1/6}  X_2^{1/6}  X_3^{1/6} + \sum_{Y_i} X_0^{1/4} X_1^{1/4}  X_2^{1/4}  X_3^{1/4} \\
\notag
& & + \sum_{Y_i} \max_{\{i,j,k\} = \{2,3,4\}} \left\{
\frac{Y_1 Y_2 Y_3 Y_4 Y_5 Y_6 Y_7 Y_8 Y_9 Y_{10}}{Y_{k+6} \max \left\{ Y_i Y_{i+3}^2 Y_{i+6}, Y_j Y_{j+3}^2 Y_{j+6}, Z_k \right\} } \right\} \textrm{,}
\end{eqnarray}
where the three sums are over the dyadic variables $Y_i$,  $i = 1, \dots, 10 $, subject to the inequalities \eqref{heighta}, \eqref{heightb}, \eqref{heightc}, \eqref{heightd} and
\begin{eqnarray}
\label{extra}
Y_1^3 Y_2^2 Y_3^2 Y_4^2 Y_5 Y_6 Y_7 & \leq & \frac{B}{\log(\log(B))} \textrm{.}
\end{eqnarray}
Let us denote by $\mathcal{N}'_1(B)$, $\mathcal{N}'_2(B)$ and $\mathcal{N}'_3(B)$ the respective contributions of the three sums in \eqref{M'(B)}. Combining the conditions \eqref{heighta} and \eqref{heightb}, we get
\begin{eqnarray}
\label{combined}
Y_1^{1/4} Y_2^{1/4} Y_3^{1/8} Y_4^{1/8} Y_5^{1/4} Y_8 Y_9^{7/8} Y_{10}^{7/8} & \leq & B \textrm{.}
\end{eqnarray}
We start by bounding the contribution of the quantity $\mathcal{N}'_1(B)$ by summing successively over $Y_8$, $Y_9$ and $Y_{10}$ using respectively the conditions \eqref{combined}, \eqref{heightc} and \eqref{heightd}. We deduce
\begin{eqnarray*}
\mathcal{N}'_1(B) & = & \sum_{Y_i} Y_1 Y_2^{2/3} Y_3^{2/3} Y_4^{2/3} Y_5^{1/3} Y_6^{1/3} Y_7^{1/3} Y_8^{2/3} Y_9^{2/3} Y_{10}^{2/3}  \\
& \ll & B^{2/3} \sum_{\widehat{Y_8}} Y_1^{5/6} Y_2^{1/2} Y_3^{7/12} Y_4^{7/12} Y_5^{1/6} Y_6^{1/3} Y_7^{1/3} Y_9^{1/12} Y_{10}^{1/12} \\
& \ll & B^{5/6} \sum_{\widehat{Y_8}, \widehat{Y_9}, \widehat{Y_{10}}} Y_1^{1/2} Y_2^{1/3} Y_3^{1/3} Y_4^{1/3} Y_5^{1/6} Y_6^{1/6} Y_7^{1/6} \\
& \ll & \frac{B}{\log(\log(B))^{1/6}} \sum_{\widehat{Y_7}, \widehat{Y_8}, \widehat{Y_9}, \widehat{Y_{10}}} 1 \\
& \ll & \frac{B \log(B)^6}{\log(\log(B))^{1/6}} \textrm{,}
\end{eqnarray*}
where we have summed over $Y_7$ using the condition \eqref{extra}. Let us now turn to the estimation of $\mathcal{N}'_2(B)$. We successively sum over $Y_8$, $Y_9$ and $Y_{10}$ using respectively the conditions \eqref{combined}, \eqref{heightc} and \eqref{heightd}. We obtain
\begin{eqnarray*}
\mathcal{N}'_2(B) & = & \sum_{Y_i} Y_1^{3/2} Y_2 Y_3 Y_4 Y_5^{1/2} Y_6^{1/2} Y_7^{1/2} Y_8^{1/2} Y_9^{1/2} Y_{10}^{1/2}  \\
& \ll & B^{1/2} \sum_{\widehat{Y_8}} Y_1^{11/8} Y_2^{7/8} Y_3^{15/16} Y_4^{15/16} Y_5^{3/8} Y_6^{1/2} Y_7^{1/2} Y_9^{1/16} Y_{10}^{1/16} \\
& \ll & B^{5/8} \sum_{\widehat{Y_8}, \widehat{Y_9}, \widehat{Y_{10}}} Y_1^{9/8} Y_2^{3/4} Y_3^{3/4} Y_4^{3/4} Y_5^{3/8} Y_6^{3/8} Y_7^{3/8}\\
& \ll & \frac{B}{\log(\log(B))^{3/8}} \sum_{\widehat{Y_7}, \widehat{Y_8}, \widehat{Y_9}, \widehat{Y_{10}}} 1 \\
& \ll & \frac{B \log(B)^6}{\log(\log(B))^{3/8}} \textrm{,}
\end{eqnarray*}
where we have summed over $Y_7$ using the condition \eqref{extra}. We now turn to the case of the quantity $\mathcal{N}'_3(B)$ and, as in the proof of lemma \ref{lemmaloglog1}, we only treat the case where $(i,j,k)=(2,3,4)$ and we work under the assumptions that $Z_4=1$ and thus
\begin{eqnarray}
\label{Z}
Y_4 Y_7^2 Y_{10} & \leq & Y_1 Y_2 Y_3 Y_4 Y_5 Y_6 Y_7 \textrm{,}
\end{eqnarray}
and $Y_2 Y_5^2 Y_8 \leq Y_3 Y_6^2 Y_9$. Combining the conditions \eqref{extra} and \eqref{Z}, we get
\begin{eqnarray}
\label{combinedZ}
Y_1^2 Y_2 Y_3 Y_4^2 Y_7^2 Y_{10} & \leq & \frac{B}{\log(\log(B))}  \textrm{.}
\end{eqnarray}
We first use the condition $Y_2 Y_5^2 Y_8 \leq Y_3 Y_6^2 Y_9$ to sum over $Y_5$ and then we sum over $Y_8$ and $Y_7$ using respectively the conditions \eqref{heighta} and \eqref{combinedZ}. We deduce
\begin{eqnarray*}
\mathcal{N}_3'(B) & \ll & \sum_{Y_i} Y_1 Y_2 Y_4 Y_5 Y_6^{-1} Y_7 Y_8 \\
& \ll & \sum_{\widehat{Y_5}} Y_1 Y_2^{1/2} Y_3^{1/2} Y_4 Y_7 Y_8^{1/2} Y_9^{1/2} \\
& \ll & B^{1/2} \sum_{\widehat{Y_5}, \widehat{Y_8}} Y_1 Y_2^{1/2} Y_3^{1/2} Y_4 Y_7 Y_{10}^{-1/2} \\
& \ll & \frac{B}{\log(\log(B))^{1/2}} \sum_{\widehat{Y_5}, \widehat{Y_7}, \widehat{Y_8}} Y_{10}^{-1} \\
& \ll & \frac{B \log(B)^6}{\log(\log(B))^{1/2}} \textrm{,}
\end{eqnarray*}
which completes the proof of lemma \ref{lemmaloglog2}.
\end{proof}

\subsection{Setting up}

First, we recall that we have the following condition (given at the beginning of section \ref{restriction}),
\begin{eqnarray}
\label{height new}
\eta_1^3 \eta_2^2 \eta_3^2 \eta_4^2 \eta_5 \eta_6 \eta_7 & \leq & 3 B \textrm{.}
\end{eqnarray}

It is easy to check that the symmetry between the three quantities $\eta_2\eta_5^2$, $\eta_3\eta_6^2$ and $\eta_4\eta_7^2$ is demonstrated by the action of $\mathfrak{S}_3$ on $\left\{ (\eta_2,\eta_5,\eta_8), (\eta_3,\eta_6,\eta_9), (\eta_4,\eta_7,\eta_{10}) \right\}$. Along the proof, we will assume that
\begin{eqnarray*}
\eta_4\eta_7^2 & \leq & \eta_2\eta_5^2, \eta_3\eta_6^2 \textrm{.}
\end{eqnarray*}
The following lemma proves that we just need to multiply our future main term by a factor $3$ to take this new assumption into account.

\begin{lemma}
\label{equal}
Let $N_0(B)$ be the total number of $(\eta_1, \dots, \eta_{10}) \in \mathcal{T}(B)$ such that $\eta_2 \eta_5^2 = \eta_4 \eta_7^2$ or $\eta_3 \eta_6^2 = \eta_4 \eta_7^2$. We have the upper bound
\begin{eqnarray*}
N_0(B) & \ll & B \log(B)^3 \textrm{.}
\end{eqnarray*}
\end{lemma}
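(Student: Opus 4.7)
The plan is to exploit the coprimality conditions to collapse four of the variables to $1$, leaving a counting problem that reduces to a quadratic divisor estimate. By the $\mathfrak{S}_3$-symmetry invoked just before the lemma, it suffices to handle the case $\eta_2\eta_5^2 = \eta_4\eta_7^2$ (the case $\eta_3\eta_6^2 = \eta_4\eta_7^2$ follows by relabelling). The key observation is that the coprimality condition \eqref{gcd5} gives $\gcd(\eta_2\eta_5,\eta_4\eta_7)=1$, and therefore $\gcd(\eta_2\eta_5^2,\eta_4\eta_7^2)=1$. Two coprime positive integers which coincide must both equal $1$, so the equality $\eta_2\eta_5^2 = \eta_4\eta_7^2$ forces
\[\eta_2 = \eta_4 = \eta_5 = \eta_7 = 1.\]

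Under this restriction, the torsor equation \eqref{torsor} simplifies to
\[\eta_8 + \eta_{10} = \eta_3 \eta_6 \left( \eta_1 - \eta_6 \eta_9 \right),\]
so once $(\eta_1,\eta_3,\eta_6,\eta_9)$ is fixed the sum $S := \eta_8 + \eta_{10}$ is determined. Setting $y = \eta_{10}$ and $\eta_8 = S - y$, the height condition \eqref{height1} becomes the quadratic inequality $|y^2 - Sy| \leq B/|\eta_9|$. Completing the square shows that the admissible $y$ lie in a union of at most two intervals of total length $O(\sqrt{B/|\eta_9|})$, giving $O(\sqrt{B/|\eta_9|} + 1)$ admissible pairs $(\eta_8,\eta_{10})$.

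It remains to sum this bound over the surviving variables. From \eqref{height3} we have $0 < |\eta_9| \leq B/(\eta_1^2 \eta_3^2 \eta_6^2)$, and a standard square-root sum gives
\[\sum_{0 < |\eta_9| \leq B/(\eta_1^2\eta_3^2\eta_6^2)} \left( \sqrt{\frac{B}{|\eta_9|}} + 1 \right) \ll \frac{B}{\eta_1\eta_3\eta_6} + \frac{B}{\eta_1^2\eta_3^2\eta_6^2}.\]
Condition \eqref{height new} reduces to $\eta_1^3 \eta_3^2 \eta_6 \leq 3B$. The second term above sums to $O(B)$ without difficulty, while the first contributes three nested logarithmic factors: summing over $\eta_6$, then $\eta_3$, then $\eta_1$ under the constraint $\eta_1^3 \eta_3^2 \eta_6 \leq 3B$ yields a factor of $O(\log(B)^3)$. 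Altogether we obtain the desired bound $O(B \log(B)^3)$. The only step that requires genuine care is the quadratic count in the second paragraph; the coprimality collapse in the first paragraph is the conceptual heart of the argument, and everything in the third paragraph is routine bookkeeping.
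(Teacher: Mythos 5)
Your proof is correct and takes essentially the same route as the paper: the coprimality condition forces four of the variables to equal $1$, the height condition $|\eta_8\eta_9\eta_{10}|\leq B$ is then exploited as a quadratic count in one remaining variable (you complete the square where the paper invokes an auxiliary lemma from \cite{2A2+A1}), and the final summation yields $B\log(B)^3$. The only cosmetic difference is that you treat the case $\eta_2\eta_5^2=\eta_4\eta_7^2$ and sum using \eqref{height3} and \eqref{height new}, whereas the paper treats the symmetric case $\eta_3\eta_6^2=\eta_4\eta_7^2$ and sums using \eqref{height2}.
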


\begin{proof}
By symmetry, we only need to treat the case of the condition $\eta_3 \eta_6^2 = \eta_4 \eta_7^2$. This equality and the condition $\gcd(\eta_3 \eta_6, \eta_4 \eta_7) = 1$ imply that
$\eta_3 = \eta_4 = \eta_6 = \eta_7 = 1$. In this situation, the torsor equation is simply
\begin{eqnarray*}
\eta_2 \eta_5^2 \eta_8 + \eta_9 + \eta_{10} - \eta_1 \eta_2 \eta_5 & = & 0 \textrm{.}
\end{eqnarray*}
Thus, $N_0(B)$ is bounded by the number of $(\eta_1, \eta_2, \eta_5, \eta_8, \eta_9) \in \mathbb{Z}_{>0}^3 \times \mathbb{Z}_{\neq 0}^2$ satisfying
\begin{eqnarray*}
|\eta_8 \eta_9| \left| \eta_2 \eta_5^2 \eta_8 + \eta_9 - \eta_1 \eta_2 \eta_5 \right| & \leq & B \textrm{,} \\
\eta_1^2 \eta_2^2\eta_5^2 |\eta_8| & \leq & B \textrm{.}
\end{eqnarray*}
Using \cite[Lemma $1$]{2A2+A1} to count the number of $\eta_9$ satisfying the first of these two inequalities, we obtain
\begin{eqnarray*}
N_0(B) & \ll & \sum_{\substack{\eta_1, \eta_2, \eta_5, \eta_8 \\ \eta_1^2 \eta_2^2\eta_5^2 |\eta_8| \leq B}} \left( \frac{B^{1/2}}{|\eta_8|^{1/2}} + 1 \right) \\
& \ll & B \log(B)^3 \textrm{,}
\end{eqnarray*}
as wished.
\end{proof}

Let $N(B)$ be the overall contribution of those $(\eta_1, \dots, \eta_{10}) \in \mathcal{T}(B)$ subject to the conditions
\begin{eqnarray}
\label{symmetry}
\eta_4\eta_7^2 & \leq & \eta_2\eta_5^2, \eta_3\eta_6^2 \textrm{,} \\
\label{log}
B^{15 / \log(\log(B))} & \leq & \eta_1  \sq(\eta_2 \eta_3 \eta_4) \textrm{,} \\
\label{loglog}
\frac{B}{\log(\log(B))} & \leq & \eta_1^3 \eta_2^2 \eta_3^2 \eta_4^2 \eta_5 \eta_6 \eta_7 \textrm{.}
\end{eqnarray}
Lemmas \ref{T}, \ref{lemmaloglog1}, \ref{lemmaloglog2} and \ref{equal} give us the following result.

\begin{lemma}
\label{N(B)}
We have the estimate
\begin{eqnarray*}
N_{U,H}(B) & = & 3 N(B) + O \left( \frac{B \log(B)^6}{\log(\log(B))^{1/6}} \right) \textrm{.}
\end{eqnarray*}
\end{lemma}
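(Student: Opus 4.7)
The plan is to deduce the lemma from the four preceding lemmas by an inclusion-exclusion indexed by which of the three quantities $\eta_2\eta_5^2$, $\eta_3\eta_6^2$, $\eta_4\eta_7^2$ is smallest. By lemma \ref{T}, $N_{U,H}(B) = \#\mathcal{T}(B)$, so I would work entirely with $\mathcal{T}(B)$.

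First I would restrict to tuples $(\eta_1,\dots,\eta_{10}) \in \mathcal{T}(B)$ satisfying both \eqref{log} and \eqref{loglog}. The contribution of tuples violating \eqref{log} is bounded by lemma \ref{lemmaloglog1} by $O(B\log(B)^6/\log\log(B))$, and the contribution of those violating \eqref{loglog} is bounded by lemma \ref{lemmaloglog2} by $O(B\log(B)^6/\log\log(B)^{1/6})$; both are absorbed into the claimed error term.

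The key observation is that the torsor equation \eqref{torsor}, the coprimality conditions \eqref{gcd1}--\eqref{gcd6}, the height conditions \eqref{height1}--\eqref{height4}, together with the two auxiliary conditions \eqref{log} and \eqref{loglog}, are all invariant under the natural action of $\mathfrak{S}_3$ permuting the three triples $(\eta_2,\eta_5,\eta_8)$, $(\eta_3,\eta_6,\eta_9)$, $(\eta_4,\eta_7,\eta_{10})$. Consequently, if for each $k \in \{2,3,4\}$ one lets $N_k(B)$ denote the number of tuples in $\mathcal{T}(B)$ satisfying \eqref{log}, \eqref{loglog}, and the minimality condition $\eta_k \eta_{k+3}^2 \leq \eta_j \eta_{j+3}^2$ for every $j \in \{2,3,4\}$, then $N_2(B) = N_3(B) = N_4(B) = N(B)$.

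The three sets counted by $N_2(B)$, $N_3(B)$, $N_4(B)$ together cover the set of tuples in $\mathcal{T}(B)$ satisfying \eqref{log} and \eqref{loglog}, and they overlap precisely on the locus where at least two of $\eta_2\eta_5^2$, $\eta_3\eta_6^2$, $\eta_4\eta_7^2$ coincide. Lemma \ref{equal} bounds by $O(B\log(B)^3)$ the contribution of the two equality cases involving $\eta_4\eta_7^2$, and the remaining case $\eta_2\eta_5^2 = \eta_3\eta_6^2$ is handled identically by the same $\mathfrak{S}_3$-symmetry, so the entire overlap is negligible. Assembling these three ingredients yields the stated estimate. The only obstacle is the bookkeeping: one must verify that every defining condition of the restricted counting function is genuinely $\mathfrak{S}_3$-invariant, and apply inclusion-exclusion correctly given that the inequality in \eqref{symmetry} is non-strict.
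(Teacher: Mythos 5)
Your proposal is correct and is essentially the paper's own argument: the paper deduces the lemma directly from lemmas \ref{T}, \ref{lemmaloglog1}, \ref{lemmaloglog2} and \ref{equal}, with the factor $3$ coming from exactly the $\mathfrak{S}_3$-symmetry and the overlap (equality) loci controlled by lemma \ref{equal}. You have merely made explicit the inclusion--exclusion bookkeeping that the paper leaves implicit.
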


The end of the proof is devoted to the estimation of $N(B)$.

\subsection{Application of lemma \ref{lemma affine final}}

The idea of the proof is to view the equation \eqref{torsor} as a congruence modulo $\eta_4 \eta_7^2$.  For this, we replace $\eta_{10}$ by its value given by the equation \eqref{torsor} in the height conditions \eqref{height1} and \eqref{height4}. These conditions become
\begin{eqnarray*}
|\eta_8 \eta_9| \left| \eta_2 \eta_5^2 \eta_8 + \eta_3 \eta_6^2 \eta_9 - \eta_1 \eta_2 \eta_3 \eta_4 \eta_5 \eta_6 \eta_7 \right| & \leq & B \eta_4 \eta_7^2 \textrm{,} \\
\eta_1^2 \eta_2 \eta_3 \eta_4 \left| \eta_2 \eta_5^2 \eta_8 + \eta_3 \eta_6^2 \eta_9 - \eta_1 \eta_2 \eta_3 \eta_4 \eta_5 \eta_6 \eta_7 \right| & \leq & B \textrm{,}
\end{eqnarray*}
and we keep denoting them respectively by \eqref{height1} and \eqref{height4}. From now on, we use the notation
$\boldsymbol{\eta} = (\eta_2, \eta_3, \eta_4, \eta_5, \eta_6, \eta_7)$ and we set
\begin{eqnarray*}
\boldsymbol{\eta}^{(r_2,r_3,r_4,r_5,r_6,r_7)} & = &  \eta_2^{r_2} \eta_3^{r_3} \eta_4^{r_4} \eta_5^{r_5} \eta_6^{r_6} \eta_7^{r_7} \textrm{,}
\end{eqnarray*}
for $(r_2,r_3,r_4,r_5,r_6,r_7) \in \mathbb{Q}^6$. We set
\begin{eqnarray}
\notag
Y & = & \frac{B}{\eta_2 \eta_3 \eta_4} \textrm{,} \\
\label{Z_1}
Z_1 & = & \frac{B^{1/3}}{\boldsymbol{\eta}^{(2/3,2/3,2/3,1/3,1/3,1/3)}} \textrm{,}
\end{eqnarray}
and, for brevity, $q_8 = \eta_2 \eta_5^2$, $q_9 = \eta_3 \eta_6^2$, $q_{10} = \eta_4 \eta_7^2$. It is immediate to check that $\boldsymbol{\eta}$ is restricted to lie in the region $\mathcal{V}$ defined by
\begin{eqnarray}
\label{V}
\ \ \ \ \ \ \ \ \mathcal{V} & = & \left\{ \boldsymbol{\eta} \in \mathbb{Z}_{> 0}^6,
\begin{array}{l}
Y \log(\log(B))^{2/3} \geq q_8 Z_1^2, Y \log(\log(B))^{2/3} \geq q_9 Z_1^2 \\
Z_1 \geq 3^{-1/3}, q_8 \geq q_{10}, q_9 \geq q_{10}
\end{array}
\right\} \textrm{.}
\end{eqnarray}
We consider that $\eta_1 \in  \mathbb{Z}_{> 0}$ and $\boldsymbol{\eta} \in \mathcal{V}$ are fixed and are subject to the conditions \eqref{height new}, \eqref{log} and \eqref{loglog} and to the coprimality conditions \eqref{gcd4}, \eqref{gcd5} and \eqref{gcd6}. Let $N(\eta_1, \boldsymbol{\eta},B)$ be the number of $(\eta_8, \eta_9, \eta_{10}) \in \mathbb{Z}_{\neq 0}^3$ satisfying the equation \eqref{torsor}, the height conditions \eqref{height1}, \eqref{height2}, \eqref{height3}, \eqref{height4} and finally the coprimality conditions \eqref{gcd1}, \eqref{gcd2} and \eqref{gcd3}. The goal of this section is to prove the following lemma.

\begin{lemma}
\label{lemma inter}
We have the estimate
\begin{eqnarray*}
N(\eta_1, \boldsymbol{\eta},B) & = & \frac{B^{2/3}}{\boldsymbol{\eta}^{(1/3,1/3,1/3,2/3,2/3,2/3)}}
g_2 \left( \frac{\eta_1}{Z_1} \right) \theta_1(\eta_1, \boldsymbol{\eta}) \theta_2(\boldsymbol{\eta}) + R(\eta_1, \boldsymbol{\eta},B) \textrm{,}
\end{eqnarray*}
where $\theta_1(\eta_1, \boldsymbol{\eta})$ and $\theta_2(\boldsymbol{\eta})$ are arithmetic functions respectively defined in \eqref{theta1} and \eqref{theta2} and
\begin{eqnarray*}
\sum_{\eta_1, \boldsymbol{\eta}} R(\eta_1, \boldsymbol{\eta},B) & \ll & B \log(B)^5\log(\log(B))^{7/3} \textrm{.}
\end{eqnarray*}
\end{lemma}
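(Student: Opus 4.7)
The plan is to use the torsor equation \eqref{torsor} to eliminate $\eta_{10}$, rewriting the problem as the affine congruence
\begin{eqnarray*}
q_8 \eta_8 + q_9 \eta_9 & \equiv & \eta_1 \eta_2 \eta_3 \eta_4 \eta_5 \eta_6 \eta_7 \imod{q_{10}} \textrm{,}
\end{eqnarray*}
to which the machinery of Section~2.1 applies. The coprimality conditions \eqref{gcd5}, \eqref{gcd6} guarantee $\gcd(q_8 q_9, q_{10}) = 1$, and $\rad(q_{10}) = \rad(\eta_4 \eta_7)$ manifestly divides the right-hand side, so both hypotheses needed to invoke the lemmas of Section~2.1 are in force. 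Moreover, $\eta_{10} \neq 0$ imposes an additional cut-off whose contribution turns out to be negligible.

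Next, substituting the value of $\eta_{10}$ into the height conditions \eqref{height1}, \eqref{height2}, \eqref{height3} and \eqref{height4} gives the membership $(\eta_8, \eta_9) \in \mathcal{S}(X,T,A_1,A_2)$ with
\begin{eqnarray*}
A_1 \ = \ q_8 \textrm{,}\quad A_2 \ = \ q_9 \textrm{,}\quad T \ = \ \eta_1 \eta_2 \eta_3 \eta_4 \eta_5 \eta_6 \eta_7 \textrm{,}\quad X \ = \ \frac{B}{\eta_1^2 \eta_2 \eta_3 \eta_4} \textrm{,}
\end{eqnarray*}
as a direct check of the four conditions \eqref{A}--\eqref{D} shows. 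Condition \eqref{loglog} then becomes exactly $X/T \leq \log(\log(B))$, so lemma \ref{lemma affine final} is applicable with $\mathcal{L} = \log(\log(B))$ and a free parameter $L \geq 1$ to be optimized. A direct calculation yields $T^{1/3}/X^{1/3} = \eta_1/Z_1$ and
\begin{eqnarray*}
\frac{\varphi(q_{10})}{q_{10}^2}\, \frac{X^{2/3} T^{4/3}}{A_1 A_2} & = & \frac{\varphi(\eta_4 \eta_7^2)}{\eta_4 \eta_7^2}\, \frac{B^{2/3}}{\boldsymbol{\eta}^{(1/3,1/3,1/3,2/3,2/3,2/3)}} \textrm{,}
\end{eqnarray*}
which matches the shape of the main term stated in the lemma.

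Before invoking lemma \ref{lemma affine final}, the coprimality constraints \eqref{gcd1}, \eqref{gcd2}, \eqref{gcd3} must be separated. Conditions \eqref{gcd2}, \eqref{gcd3} already imply $\gcd(\eta_8 \eta_9, q_{10}) = 1$, while the remaining requirements $\gcd(\eta_8, \eta_1 \eta_2 \eta_3 \eta_6) = 1$ and $\gcd(\eta_9, \eta_1 \eta_2 \eta_3 \eta_5) = 1$ are handled by M\"obius inversion on divisors $\ell_1, \ell_2$; this turns the sought-for quantity into a weighted sum of $D(\mathcal{S}; q_{10}, (\ell_1 q_8, \ell_2 q_9), b)$. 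Condition \eqref{gcd1} on $\eta_{10}$ translates, via the congruence, into local conditions at primes dividing $\eta_1 \eta_2 \eta_3 \eta_5 \eta_6$; these are evaluated prime by prime and, together with the factor $\varphi(q_{10})/q_{10}$ produced by lemma \ref{lemma affine final}, assemble into the arithmetic functions $\theta_1(\eta_1,\boldsymbol{\eta})$ and $\theta_2(\boldsymbol{\eta})$ of the statement.

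The hard part will be the error analysis. Lemma \ref{lemma affine final} produces two contributions: a "smooth" one of size $L^{-1}$ times the main term (multiplied by the factor $\mathcal{L}^{4/3}$) and a "rough" one $L^4 \log(2X)^2 E(q_{10}, \mathbf{a})$ whose dependence on $\mathbf{a} = (\eta_2 \eta_5^2, \eta_3 \eta_6^2)$ rules out any term-by-term estimate. This is precisely the phenomenon advertised in the introduction: the sum of $E(q_{10}, \mathbf{a})$ over $\eta_5, \eta_6$ grouped dyadically must be controlled by lemma \ref{error} (taking $b_1 = \eta_2, c_1 = \eta_5$ and $b_2 = \eta_3, c_2 = \eta_6$). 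After choosing $L$ as a suitable small power of $\log(\log(B))$ to balance the two error contributions, the remaining sums over $\eta_1, \eta_2, \eta_3, \eta_4, \eta_7$ are handled by standard divisor-function and $\sigma_{-1}$, $\tau$ bounds, together with the constraints \eqref{height new} and \eqref{symmetry}. Tracking the precise accumulation of $\log(B)$ and $\log(\log(B))$ powers through these dyadic summations is the main technical challenge and is the step that ultimately produces the advertised bound $B \log(B)^5 \log(\log(B))^{7/3}$.
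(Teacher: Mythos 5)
Your overall skeleton is the paper's: view \eqref{torsor} as a congruence modulo $q_{10}=\eta_4\eta_7^2$, check $\gcd(q_8q_9,q_{10})=1$ and $\rad(q_{10})\mid b$, identify $X=B/(\eta_1^2\eta_2\eta_3\eta_4)$, $T=\eta_1\boldsymbol{\eta}^{(1,1,1,1,1,1)}$, $A_1=q_8$, $A_2=q_9$, apply lemma \ref{lemma affine final} with $\mathcal{L}=\log(\log(B))$, and control the $\mathbf{a}$-dependent error on average over $\eta_5,\eta_6$ via lemma \ref{error}. But the quantitative heart of the proof is missing, on two counts. First, your choice of $L$ is incompatible with the bound you must prove: the second ("smooth") error in lemma \ref{lemma affine final} contributes, after all summations, roughly $B\log(B)^6\,\mathcal{L}^{7/3}/L$, so to reach the stated $B\log(B)^5\log(\log(B))^{7/3}$ you are forced to take $L\asymp\log(B)$ (as the paper does), not "a small power of $\log(\log(B))$"; with your choice the smooth error alone already exceeds the claimed remainder by essentially a full factor $\log(B)$. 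Second, once $L\asymp\log(B)$ is imposed, the "rough" error carries the factor $L^4\log(2X)^2\asymp\log(B)^6$, so the sum of $E(q,\mathbf{a})$ over all remaining variables must exhibit a genuine power-type saving, of size $B^{-c/\log(\log(B))}$. The constraints \eqref{height new}, \eqref{symmetry} and standard divisor bounds, which is all you invoke, cannot produce this: for instance the tuples with $\eta_1=\eta_2=\eta_3=\eta_4=1$ and $\eta_5\eta_6\eta_7\leq 3B$ alone contribute $\gg B$ to $\sum E(q,\mathbf{a})$, which multiplied by $\log(B)^6$ swamps the main term. The saving comes exclusively from the restriction \eqref{log}, i.e. $\eta_1\sq(\eta_2\eta_3\eta_4)\geq B^{15/\log(\log(B))}$ (put in place by lemma \ref{lemmaloglog1} precisely for this purpose): summing $1/\eta_1^2$ over this range yields the factor $\sq(\eta_2\eta_3\eta_4)B^{-15/\log(\log(B))}$ that beats the accumulated $\tau$- and $\log$-powers. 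Your proposal never uses \eqref{log} in the error analysis, so the decisive step fails as written. Related to this, the paper must also truncate the M\"obius divisors coming from $\gcd(\eta_8,\eta_6)=\gcd(\eta_9,\eta_5)=1$ at $B^{3/\log(\log(B))}$ (handling the tail by reinserting it into Browning's bound for $N_{U,H}$) before these divisor sums can be tolerated inside the error; some such device is needed in your setup too.

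A smaller but real imprecision concerns \eqref{gcd1}. It is not a condition that can be "evaluated prime by prime" while keeping the modulus $q_{10}$: one first reduces it, via the torsor equation and \eqref{gcd2}--\eqref{gcd6}, to $\gcd(\eta_{10},\eta_1\eta_4)=1$ (note your list of primes omits $\eta_4$, which is not automatic), and then removes it by a M\"obius inversion over $k_{10}\mid\eta_1\eta_4$, which \emph{changes the modulus} of the congruence to $k_{10}\eta_4\eta_7^2$; the factor $\varphi(q)/q^2$ of lemma \ref{lemma affine final}, the error terms $E(q,\mathbf{a})$, $E_2(q)$, and ultimately $\theta_1,\theta_2$ must all be computed with this enlarged modulus (and the M\"obius divisors $\ell_8,\ell_9$ must be kept coprime to it so that the hypothesis $\gcd(a_1a_2,q)=1$ remains valid). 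One also needs the small argument, again using \eqref{log}, that the value $\eta_{10}=0$ cannot occur, rather than merely asserting its contribution is negligible.
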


First, we see that since $\gcd(\eta_2\eta_5, \eta_3\eta_6\eta_9) = 1$ and $\gcd(\eta_3\eta_6, \eta_2\eta_5\eta_8) = 1$, the equation \eqref{torsor} proves that the coprimality condition \eqref{gcd1} can be replaced by $\gcd(\eta_{10},\eta_1\eta_4) = 1$. Let us remove the coprimality conditions $\gcd(\eta_8,\eta_6)=1$ and $\gcd(\eta_9,\eta_5)=1$ using Möbius inversions, we obtain
\begin{eqnarray*}
N(\eta_1, \boldsymbol{\eta},B) & = & \sum_{\substack{k_8 | \eta_6 \\ \gcd(k_8, \eta_1\eta_2\eta_3\eta_4\eta_7) = 1}} \mu(k_8) \sum_{\substack{k_9 | \eta_5 \\ \gcd(k_9, \eta_1\eta_2\eta_3\eta_4\eta_7) = 1}} \mu(k_9) S_{k_8,k_9}(\eta_1, \boldsymbol{\eta},B) \textrm{,}
\end{eqnarray*}
where
\begin{eqnarray*}
S_{k_8,k_9}(\eta_1, \boldsymbol{\eta},B) & = & \# \left\{ \left( \eta_8', \eta_9', \eta_{10} \right) \in \mathbb{Z}_{\neq 0}^3,
\begin{array}{l}
\eta_2 \eta_5^2 k_8 \eta_8' + \eta_3 \eta_6^2 k_9 \eta_9' + \eta_4 \eta_7^2 \eta_{10} = b \\
\eqref{height1}, \eqref{height2}, \eqref{height3}, \eqref{height4} \\
\gcd(\eta_{10},\eta_1\eta_4)=1 \\
\gcd(\eta_8'\eta_9',\eta_1\eta_2\eta_3\eta_4\eta_7) = 1 \\
\end{array}
\right\} \textrm{,}
\end{eqnarray*}
where we use the notations $\eta_8=k_8\eta_8'$ and $\eta_9=k_9\eta_9'$ and $b = \eta_1 \eta_2 \eta_3 \eta_4 \eta_5 \eta_6 \eta_7$. From now on, we set
\begin{eqnarray*}
\mathcal{Z} & = & B^{1/\log(\log(B))} \textrm{.}
\end{eqnarray*}
To take care of the error terms showing up in the application of lemma \ref{lemma affine final}, we need to show that the summations over $k_8$ and $k_9$ can be restricted to $k_8, k_9 \leq \mathcal{Z}^3$. To do so, let $N'(\eta_1, \boldsymbol{\eta},B) $ be the contribution of $N(\eta_1, \boldsymbol{\eta},B)$ under the assumption $k_8 > \mathcal{Z}^3$, that is to say
\begin{eqnarray*}
N'(\eta_1, \boldsymbol{\eta},B) & = & \sum_{\substack{k_8 | \eta_6, k_8 > \mathcal{Z}^3  \\ \gcd(k_8, \eta_1\eta_2\eta_3\eta_4\eta_7) = 1}}  \ \ \ \sum_{\substack{k_9 | \eta_5  \\ \gcd(k_9, \eta_1\eta_2\eta_3\eta_4\eta_7) = 1}} S_{k_8,k_9}(\eta_1, \boldsymbol{\eta},B) \textrm{.}
\end{eqnarray*}
Let us write $\eta_6 = k_8 \eta_6'$ and $\eta_5 = k_9 \eta_5'$. We notice that the equation implies that $k_8 k_9 | \eta_{10}$ and thus we also write $\eta_{10} = k_8 k_9 \xi_{10}$. With these notations, we get
\begin{eqnarray*}
N'(\eta_1, \boldsymbol{\eta},B) & = & \sum_{\substack{\mathcal{Z}^3 < k_8 \leq B^{1/2} \\ \gcd(k_8, \eta_1\eta_2\eta_3\eta_4\eta_7) = 1}}  \ \ \ \sum_{\substack{k_9 \leq B^{1/2} \\ \gcd(k_9, \eta_1\eta_2\eta_3\eta_4\eta_7) = 1}} S_{k_8,k_9}'(\eta_1, \boldsymbol{\eta},B) \textrm{,}
\end{eqnarray*}
where
\begin{eqnarray*}
 S_{k_8,k_9}'(\eta_1, \boldsymbol{\eta},B) & \! \! \!  = \! \! \! & \# \left\{ \left( \eta_8', \eta_9', \xi_{10} \right) \in \mathbb{Z}_{\neq 0}^3,
\begin{array}{l}
\eta_2 \eta_5'^2 k_9 \eta_8' + \eta_3 \eta_6'^2 k_8 \eta_9' + \eta_4 \eta_7^2 \xi_{10} = b' \\
\eqref{height1}, \eqref{height2}, \eqref{height3}, \eqref{height4} \\
\gcd(\xi_{10},\eta_1\eta_4)=1 \\
\gcd(\eta_8'\eta_9',\eta_1\eta_2\eta_3\eta_4\eta_7) = 1 \\
\end{array}
\right\} \textrm{,}
\end{eqnarray*}
where we have set $b' = \eta_1 \eta_2 \eta_3 \eta_4 \eta_5' \eta_6' \eta_7$. Let us split the summations over $k_8$ and $k_9$ into dyadic ranges. Let us assume that $K_8, K_9 \geq 1/2$ and that $K_8 < k_8 \leq 2K_8$ and $K_9 < k_9 \leq 2K_9$. Let us set $\xi_8 = k_9 \eta_8'$ and $\xi_9 = k_8 \eta_9'$. The height conditions \eqref{height1}, \eqref{height2}, \eqref{height3} and \eqref{height4} imply respectively
\begin{eqnarray}
\label{height1K}
|\xi_8 \xi_9 \xi_{10}| & \leq & \frac{B}{K_8K_9} \textrm{,} \\
\label{height2K}
\eta_1^2 \eta_2^2 \eta_3 \eta_4 \eta_5'^2 |\xi_8| & \leq & \frac{B}{K_8K_9} \textrm{,} \\
\label{height3K}
\eta_1^2 \eta_2 \eta_3^2 \eta_4 \eta_6'^2 |\xi_9| & \leq & \frac{B}{K_8K_9} \textrm{,} \\
\label{height4K}
\eta_1^2 \eta_2 \eta_3 \eta_4^2 \eta_7^2 |\xi_{10}| & \leq & \frac{B}{K_8K_9} \textrm{.}
\end{eqnarray}
We thus have, for $K_8 < k_8 \leq 2K_8$ and $K_9 < k_9 \leq 2K_9$,
\begin{eqnarray*}
S_{k_8,k_9}'(\eta_1, \boldsymbol{\eta},B) & \ll &
\# \left\{ \left( \xi_8, \xi_9, \xi_{10} \right) \in \mathbb{Z}_{\neq 0}^3,
\begin{array}{l}
k_8 | \xi_9, k_9 | \xi_8 \\
\eta_2 \eta_5'^2 \xi_8 + \eta_3 \eta_6'^2 \xi_9 + \eta_4 \eta_7^2 \xi_{10} = b' \\
\eqref{height1K}, \eqref{height2K}, \eqref{height3K}, \eqref{height4K} \\
\gcd(\xi_{10},\eta_1\eta_4)=1 \\
\gcd(\xi_8\xi_9,\eta_1\eta_2\eta_3\eta_4\eta_7) = 1 \\
\end{array}
\right\}
\textrm{.}
\end{eqnarray*}
Therefore, using the standard bound for the divisor function,
\begin{eqnarray*}
\tau(n) & \ll & n^{1/\log(\log(3 n))} \textrm{,}
\end{eqnarray*}
for $n \geq 1$, we get
\begin{eqnarray*}
\sum_{\substack{K_8 < k_8 \leq 2K_8 \\ K_9 < k_9 \leq 2K_9}} S_{k_8,k_9}'(\eta_1, \boldsymbol{\eta},B) & \ll &
\mathcal{Z}^2 S_{K_8,K_9} \textrm{,}
\end{eqnarray*}
where $S_{K_8,K_9} = S_{K_8,K_9}(\eta_1,\eta_2,\eta_3,\eta_4,\eta_5',\eta_6',\eta_7,B)$ is defined by
\begin{eqnarray*}
S_{K_8,K_9} & = &
\# \left\{ \left( \xi_8, \xi_9, \xi_{10} \right) \in \mathbb{Z}_{\neq 0}^3,
\begin{array}{l}
\eta_2 \eta_5'^2 \xi_8 + \eta_3 \eta_6'^2 \xi_9+ \eta_4 \eta_7^2 \xi_{10} = b' \\
\eqref{height1K}, \eqref{height2K}, \eqref{height3K}, \eqref{height4K} \\
\gcd(\xi_{10},\eta_1\eta_4)=1 \\
\gcd(\xi_8\xi_9,\eta_1\eta_2\eta_3\eta_4\eta_7) = 1 \\
\end{array}
\right\} \textrm{.}
\end{eqnarray*}
Setting $\xi_{6,8} = \gcd(\eta_6',\xi_8)$ and $\xi_{5,9} = \gcd(\eta_5',\xi_9)$, we see that $\xi_{6,8}\xi_{5,9} |\xi_{10}$, and we thus obtain
\begin{eqnarray*}
\sum_{\eta_1,\eta_2,\eta_3,\eta_4,\eta_5',\eta_6',\eta_7} S_{K_8,K_9} & \ll & \sum_{\xi_{6,8},\xi_{5,9} \leq B} N_{U,H} \left( \frac{B}{K_8 K_9 \xi_{6,8} \xi_{5,9}} \right) \textrm{.}
\end{eqnarray*}
Therefore, we can apply the work of Browning \cite{MR2250046}. We get
\begin{eqnarray*}
\sum_{\eta_1, \boldsymbol{\eta}} N'(\eta_1, \boldsymbol{\eta},B) & \ll & \mathcal{Z}^2 \sum_{\substack{\mathcal{Z}^3 < K_8 < B^{1/2} \\ K_9 < B^{1/2}}}
\sum_{\xi_{6,8},\xi_{5,9} \leq B} \frac{B \log(B)^6}{K_8 K_9 \xi_{6,8} \xi_{5,9}} \\
& \ll & B \mathcal{Z}^{-1/2} \textrm{,}
\end{eqnarray*}
which is satisfactory. Therefore, we can restrict from now on the summations over $k_8$ and $k_9$ as we wished. If we allow $\eta_{10} = 0$ in the definition of the set $S_{k_8,k_9}(\eta_1, \boldsymbol{\eta},B)$ then the coprimality condition $\gcd(\eta_{10},\eta_1\eta_4)=1$ implies $\eta_1 = \eta_4 = 1$. Moreover, the equation
$\eta_2 \eta_5^2 k_8 \eta_8' + \eta_3 \eta_6^2 k_9 \eta_9' = \eta_2 \eta_3 \eta_5 \eta_6 \eta_7$ also implies $\eta_2 = \eta_3 = 1$. These restrictions are in contradiction with the condition \eqref{log}, so from now on, we allow $\eta_{10}$ to vanish in the definition of $S_{k_8,k_9}(\eta_1, \boldsymbol{\eta},B)$. Let us now remove the coprimality condition $\gcd(\eta_{10},\eta_1\eta_4) = 1$ using a Möbius inversion. We get that the main term of $N(\eta_1, \boldsymbol{\eta},B)$ is equal to
\begin{eqnarray*}
\ \sum_{\substack{k_8 | \eta_6, k_8 \leq \mathcal{Z}^3 \\ \gcd(k_8, \eta_1\eta_2\eta_3\eta_4\eta_7) = 1}} \! \! \mu(k_8) \! \! \sum_{\substack{k_9 | \eta_5, k_9 \leq \mathcal{Z}^3 \\ \gcd(k_9, \eta_1\eta_2\eta_3\eta_4\eta_7) = 1}} \! \! \mu(k_9) \sum_{k_{10} | \eta_1\eta_4} \mu(k_{10}) S_{k_8,k_9,k_{10}}(\eta_1, \boldsymbol{\eta},B) \textrm{,}
\end{eqnarray*}
where $S_{k_8,k_9,k_{10}}(\eta_1, \boldsymbol{\eta},B)$ denotes
\begin{eqnarray*}
& & \# \left\{ \left( \eta_8', \eta_9', \eta_{10}' \right) \in \mathbb{Z}_{\neq 0}^2 \times  \mathbb{Z},
\begin{array}{l}
\eta_2 \eta_5^2 k_8 \eta_8' + \eta_3 \eta_6^2 k_9 \eta_9' + \eta_4 \eta_7^2 k_{10} \eta_{10}' = b \\
\eqref{height1}, \eqref{height2}, \eqref{height3}, \eqref{height4} \\
\gcd(\eta_8'\eta_9',\eta_1\eta_2\eta_3\eta_4\eta_7) = 1 \\
\end{array}
\right\} \textrm{.}
\end{eqnarray*}
Since $\gcd(\eta_1 \eta_4, k_8 k_9 \eta_5 \eta_6 \eta_8' \eta_9' )=1$, we have the condition $\gcd(k_{10}, k_8 k_9 \eta_5 \eta_6 \eta_8' \eta_9' )=1$. Moreover, the two conditions $\gcd(\eta_2 \eta_5 k_8 \eta_8', \eta_3) = 1$ and $\gcd(\eta_3 \eta_6 k_9 \eta_9', \eta_2) = 1$ imply that we also have $\gcd(k_{10},\eta_2 \eta_3)=1$. We now remove the coprimality conditions $\gcd(\eta_8' \eta_9',\eta_1 \eta_2 \eta_3) = 1$ using Möbius inversions. Setting $\eta_8' = \ell_8 \eta_8''$ and $\eta_9' = \ell_9 \eta_9''$, we obtain that the main term of $N(\eta_1, \boldsymbol{\eta},B)$ is equal to
\begin{eqnarray*}
& & \sum_{\substack{k_8 | \eta_6, k_8 \leq \mathcal{Z}^3 \\ \gcd(k_8, \eta_1\eta_2\eta_3\eta_4\eta_7) = 1}} \mu(k_8) \sum_{\substack{k_9 | \eta_5, k_9 \leq \mathcal{Z}^3 \\ \gcd(k_9, \eta_1\eta_2\eta_3\eta_4\eta_7) = 1}} \mu(k_9) \\
& & \sum_{\substack{k_{10} | \eta_1\eta_4 \\ \gcd(k_{10}, k_8 k_9 \eta_2 \eta_3 \eta_5 \eta_6) = 1}} \mu(k_{10}) \sum_{\substack{\ell_8, \ell_9 | \eta_1\eta_2\eta_3 \\ \gcd(\ell_8\ell_9,k_{10}\eta_4\eta_7) = 1}} \mu(\ell_8) \mu(\ell_9) S(\eta_1, \boldsymbol{\eta},B) \textrm{,}
\end{eqnarray*}
where $S(\eta_1, \boldsymbol{\eta},B)$ denotes
\begin{eqnarray*}
& & \# \left\{ \left( \eta_8'', \eta_9'' \right) \in \mathbb{Z}_{\neq 0}^2,
\begin{array}{l}
\eta_2 \eta_5^2  k_8 \ell_8 \eta_8'' + \eta_3 \eta_6^2 k_9 \ell_9 \eta_9'' \equiv b \imod{k_{10} \eta_4 \eta_7^2} \\
\eqref{height1}, \eqref{height2}, \eqref{height3}, \eqref{height4} \\
\gcd(\eta_8''\eta_9'',k_{10}\eta_4\eta_7) = 1 \\
\end{array}
\right\} \textrm{.}
\end{eqnarray*}
Note that we have replaced the equation $\eta_2 \eta_5^2 k_8 \ell_8 \eta_8'' + \eta_3 \eta_6^2 k_9 \ell_9 \eta_9'' + \eta_4 \eta_7^2 k_{10} \eta_{10}' = b$ by a congruence. Setting
\begin{eqnarray*}
X & = & \frac{B}{\eta_1^2 \boldsymbol{\eta}^{(1,1,1,0,0,0)}} \textrm{,} \\
T & = & \eta_1 \boldsymbol{\eta}^{(1,1,1,1,1,1)} \textrm{,}
\end{eqnarray*}
and $A_1 = k_8 \ell_8 \eta_2 \eta_5^2$, $A_2 = k_9 \ell_9 \eta_3 \eta_6^2$ and recalling the equality \eqref{equality S}, it is immediate to check that $(\eta_8'', \eta_9'') \in \mathbb{Z}_{\neq 0}^2$ are subject to the height conditions \eqref{height1}, \eqref{height2}, \eqref{height3} and \eqref{height4} if and only if $(\eta_8'', \eta_9'') \in \mathcal{S} \cap \mathbb{Z}_{\neq 0}^2$. Setting $\mathcal{L} = \log(\log(B))$, we see that the condition \eqref{loglog} can be rewritten $X/\mathcal{L} \leq T$. We can therefore apply lemma \ref{lemma affine final} with $L = \log(B)$, $q = k_{10} \eta_4 \eta_7^2$ and $\mathbf{a} = (k_8 \ell_8 \eta_2 \eta_5^2, k_9 \ell_9 \eta_3 \eta_6^2)$. Recall the definitions \eqref{varphi} of $\varphi^{\ast}$ and \eqref{Z_1} of $Z_1$ and also the definitions of $E(q,\mathbf{a})$ and $E_2(q)$ respectively given in lemmas \ref{Ramanujan lemma} and \ref{volume D}. We obtain
\begin{eqnarray*}
S(\eta_1, \boldsymbol{\eta},B) - \frac{\varphi^{\ast}( k_{10} \eta_4 \eta_7)}{k_8 \ell_8 k_9 \ell_9 k_{10}}
\frac{B^{2/3}}{\boldsymbol{\eta}^{(1/3,1/3,1/3,2/3,2/3,2/3)}} g_2 \left( \frac{\eta_1}{Z_1} \right) & \ll &
\mathcal{E} + \mathcal{E}'  \textrm{,}
\end{eqnarray*}
where
\begin{eqnarray*}
\mathcal{E} & = & \log(B)^6 E(q, \mathbf{a})  \textrm{,}
\end{eqnarray*}
and
\begin{eqnarray*}
\mathcal{E}' & = & \frac{B^{2/3}}{k_8 \ell_8 k_9 \ell_9 k_{10} \boldsymbol{\eta}^{(1/3,1/3,1/3,2/3,2/3,2/3)}} \mathcal{L}^{4/3} \\
& & \left( \frac{\mathcal{L}}{\log(B)} + \frac{k_8^{1/2} \ell_8^{1/2} \eta_1 \eta_2 \eta_3^{1/2} \eta_4^{1/2} \eta_5}{B^{1/2}}
+ \frac{k_9^{1/2} \ell_9^{1/2} \eta_1 \eta_2^{1/2} \eta_3 \eta_4^{1/2} \eta_6}{B^{1/2}} \right) E_2(q) \textrm{.}
\end{eqnarray*}
Let us estimate the contribution of these error terms. Let us start by bounding the overall contribution of $\mathcal{E}$. For this, we write $\eta_5 = k_9 \eta_5'$ and $\eta_6 = k_8 \eta_6'$, and we let $Y_5$, $Y_6$ and $Y_7$ be variables running over the set $\{ 2^n, n\geq -1\}$. We define $\mathcal{N} = \mathcal{N}(Y_5, Y_6, Y_7)$ as the sum over $\eta_5', \eta_6', \eta_7 \in \mathbb{Z}_{\geq 1}$ satisfying $Y_5 < k_9 \eta_5' \leq 2 Y_5$, $Y_6 < k_8 \eta_6' \leq 2Y_6$ and $Y_7 < \eta_7 \leq 2Y_7$ and the coprimality conditions $\gcd(\eta_5'\eta_6', \eta_4 \eta_7) = 1$ and $\gcd(\eta_5',\eta_6') = 1$, of the quantity
\begin{eqnarray*}
\sum_{\substack{k_8,k_9 \leq \mathcal{Z}^3 \\ \gcd(k_8k_9, \eta_1\eta_2\eta_3\eta_4\eta_7) = 1}} \ \  \sum_{\substack{k_{10} | \eta_1\eta_4 \\ \gcd(k_{10}, k_8 k_9 \eta_2 \eta_3 \eta_5' \eta_6') = 1}} \ \
\sum_{\substack{\ell_8, \ell_9 | \eta_1\eta_2\eta_3 \\ \gcd(\ell_8\ell_9,k_{10}\eta_4\eta_7) = 1}} \log(B)^6 E(q, \mathbf{a}') \textrm{,}
\end{eqnarray*}
where $ \mathbf{a}' = (k_9 \ell_8 \eta_2 \eta_5'^2, k_8 \ell_9 \eta_3 \eta_6'^2)$. We now aim to bound the contribution of the error term $\mathcal{E}$ by first estimating the quantity $\mathcal{N}$ and then by summing $\mathcal{N}$ over $\eta_1$, $\eta_2$, $\eta_3$ and $\eta_4$ and over all the possible values for the $Y_5$, $Y_6$ and $Y_7$. Note that the variables  $Y_5$, $Y_6$ and $Y_7$ satisfy the following inequalities
\begin{eqnarray}
\label{errorcondition1}
\eta_1^3 \eta_2^2 \eta_3^2 \eta_4^2 Y_5 Y_6 Y_7 & \leq & 3 B \textrm{,} \\
\label{errorcondition3}
\eta_4 Y_7^2 & \leq & 4 \eta_2 Y_5^2 \textrm{,} \\
\label{errorcondition4}
\eta_4 Y_7^2 & \leq & 4 \eta_3 Y_6^2 \textrm{.}
\end{eqnarray}
Applying lemma \ref{error} to sum over $\eta_5'$ and $\eta_6'$ and recalling that $q = k_{10} \eta_4 \eta_7^2$, we see that
\begin{eqnarray*}
\mathcal{N} & \ll & \log(B)^6 \sum_{Y_7 < \eta_7 \leq 2 Y_7} \sum_{k_8, k_9 \leq \mathcal{Z}^3} \sum_{k_{10} | \eta_1\eta_4} \sum_{\ell_8, \ell_9 | \eta_1\eta_2\eta_3}
\left( \frac{Y_5Y_6}{k_8k_9} + k_{10} \eta_4 \eta_7^2 \right) \tau(q)^2 E_1(q) \\
& \ll & \mathcal{Z}^7  \sum_{Y_7 < \eta_7 \leq 2 Y_7} \tau(\eta_1\eta_4) \tau(\eta_1\eta_2\eta_3)^2 \tau(\eta_1 \eta_4^2 \eta_7^2)^2
\left( Y_5Y_6 + \eta_1 \eta_4^2 \eta_7^2 \right) \\
& \ll & \mathcal{Z}^{12} \left( Y_5Y_6 Y_7 + \eta_1 \eta_4^2 Y_7^3 \right) \textrm{.}
\end{eqnarray*}
Using the two conditions \eqref{errorcondition3} and \eqref{errorcondition4}, we finally obtain
\begin{eqnarray*}
\mathcal{N} & \ll & \mathcal{Z}^{12} \eta_1 \eta_2^{1/2} \eta_3^{1/2} \eta_4 Y_5 Y_6 Y_7 \textrm{.}
\end{eqnarray*}
We now aim to sum this quantity over all the possible values for $Y_5$, $Y_6$ and $Y_7$. Let us start by summing over $Y_7$ using the condition \eqref{errorcondition1} and then over $\eta_1$ using the condition \eqref{log}, we obtain
\begin{eqnarray*}
\sum_{Y_i} \mathcal{N} & \ll & \mathcal{Z}^{12} \sum_{\eta_1, \eta_2, \eta_3, \eta_4, Y_5, Y_6, Y_7}  \eta_1 \eta_2^{1/2} \eta_3^{1/2} \eta_4 Y_5 Y_6 Y_7 \\
& \ll & B \mathcal{Z}^{13} \sum_{\eta_1, \eta_2, \eta_3, \eta_4} \frac1{\eta_1^2 \eta_2^{3/2} \eta_3^{3/2} \eta_4} \\
& \ll & B \mathcal{Z}^{-2} \sum_{\eta_2, \eta_3, \eta_4} \frac{\sq(\eta_2 \eta_3 \eta_4)}{\eta_2^{3/2} \eta_3^{3/2} \eta_4} \\
& \ll & B \mathcal{Z}^{-1} \textrm{,}
\end{eqnarray*}
which is satisfactory. In addition, the overall contributions of the three terms of the error term $\mathcal{E}'$ are easily seen to be bounded by
$B \log(B)^5 \log(\log(B))^{7/3}$, which is also satisfactory. Therefore, the main term of $N(\eta_1, \boldsymbol{\eta},B)$ is equal to
\begin{eqnarray*}
& & \sum_{\substack{k_8 | \eta_6, k_8 \leq \mathcal{Z}^3 \\ \gcd(k_8, \eta_1\eta_2\eta_3\eta_4\eta_7) = 1}} \frac{\mu(k_8)}{k_8} \sum_{\substack{k_9 | \eta_5, k_9 \leq \mathcal{Z}^3 \\ \gcd(k_9, \eta_1\eta_2\eta_3\eta_4\eta_7) = 1}} \frac{\mu(k_9)}{k_9} \sum_{\substack{k_{10} | \eta_1\eta_4 \\ \gcd(k_{10}, k_8 k_9 \eta_2 \eta_3 \eta_5 \eta_6) = 1}} \frac{\mu(k_{10})}{k_{10}} \\
& & \sum_{\substack{\ell_8, \ell_9 | \eta_1\eta_2\eta_3 \\ \gcd(\ell_8\ell_9,k_{10}\eta_4\eta_7) = 1}} \frac{\mu(\ell_8)}{\ell_8} \frac{\mu(\ell_9)}{\ell_9}
\varphi^{\ast}( k_{10} \eta_4 \eta_7) \frac{B^{2/3}}{\boldsymbol{\eta}^{(1/3,1/3,1/3,2/3,2/3,2/3)}} g_2 \left( \frac{\eta_1}{Z_1} \right) \textrm{.}
\end{eqnarray*}
Using the bound of lemma \ref{bounds} for $g_2$, we see that this quantity is
\begin{eqnarray*}
& \ll & \sum_{\substack{k_8 | \eta_6, k_9 | \eta_5 \\ k_8, k_9 \leq \mathcal{Z}^3}} \frac1{k_8} \frac1{k_9} \sigma_{-1}(\eta_1\eta_4) \sigma_{-1}(\eta_1\eta_2\eta_3)^2 \frac{B^{2/3}}{\boldsymbol{\eta}^{(1/3,1/3,1/3,2/3,2/3,2/3)}} \textrm{.}
\end{eqnarray*}
As a result, we see that if we remove the conditions $k_8, k_9 \leq \mathcal{Z}^3$ from the sums over $k_8$ and $k_9$, we create an error term whose overall contribution is for instance seen to be bounded by $B \mathcal{Z}^{-1}$. Thus, we have proved that we can write
\begin{eqnarray*}
N(\eta_1, \boldsymbol{\eta},B) & = & M(\eta_1, \boldsymbol{\eta},B) + R(\eta_1, \boldsymbol{\eta},B) \textrm{,}
\end{eqnarray*}
where
\begin{eqnarray*}
\sum_{\eta_1, \boldsymbol{\eta}} R(\eta_1, \boldsymbol{\eta},B) & \ll & B \log(B)^5\log(\log(B))^{7/3} \textrm{,}
\end{eqnarray*}
and
\begin{eqnarray*}
M(\eta_1, \boldsymbol{\eta},B) & = & \frac{B^{2/3}}{\boldsymbol{\eta}^{(1/3,1/3,1/3,2/3,2/3,2/3)}}
g_2 \left( \frac{\eta_1}{Z_1} \right) \theta (\eta_1, \boldsymbol{\eta}) \textrm{,}
\end{eqnarray*}
where
\begin{eqnarray*}
\theta(\eta_1, \boldsymbol{\eta}) & = & \sum_{\substack{k_8 | \eta_6 \\ \gcd(k_8, \eta_1\eta_2\eta_3\eta_4\eta_7) = 1}} \frac{\mu(k_8)}{k_8} \sum_{\substack{k_9 | \eta_5 \\ \gcd(k_9, \eta_1\eta_2\eta_3\eta_4\eta_7) = 1}} \frac{\mu(k_9)}{k_9} \\
& & \sum_{\substack{k_{10} | \eta_1\eta_4 \\ \gcd(k_{10}, k_8 k_9 \eta_2 \eta_3 \eta_5 \eta_6) = 1}} \frac{\mu(k_{10})}{k_{10}} \sum_{\substack{\ell_8, \ell_9 | \eta_1\eta_2\eta_3 \\ \gcd(\ell_8\ell_9,k_{10}\eta_4\eta_7) = 1}} \frac{\mu(\ell_8)}{\ell_8} \frac{\mu(\ell_9)}{\ell_9}
\varphi^{\ast}( k_{10} \eta_4 \eta_7) \\
& = &  \frac{\varphi^{\ast}(\eta_3\eta_6)}{\varphi^{\ast}(\eta_3)}  \frac{\varphi^{\ast}(\eta_2\eta_5)}{\varphi^{\ast}(\eta_2)} \varphi^{\ast}(\eta_1\eta_2\eta_3\eta_4\eta_7)^2
\sum_{\substack{k_{10} | \eta_1 \eta_4 \\ \gcd(k_{10},\eta_2 \eta_3 \eta_5 \eta_6)=1}} \frac{\mu(k_{10})}{k_{10} \varphi^{\ast}( \eta_4 \eta_7 k_{10})} \textrm{.}
\end{eqnarray*}
It is easy to check that for $a, b, c \geq 1$, we have
\begin{eqnarray*}
\sum_{\substack{k|a \\ \gcd(k,c) = 1}} \frac{\mu(k)}{k\varphi^{\ast}(kb)} & = &
\frac{\varphi^{\ast}(\gcd(a,b))}{\varphi^{\ast}(b)\varphi^{\ast}(\gcd(a,b,c))}
\prod_{\substack{p|a \\ p \nmid bc}} \left( 1 - \frac1{p-1} \right) \textrm{.}
\end{eqnarray*}
Using this equality and the remaining coprimality conditions \eqref{gcd4}, \eqref{gcd5} and \eqref{gcd6} and recalling the definition \eqref{checkmark} of $\psi$, we see that we can write
\begin{eqnarray*}
\theta(\eta_1, \boldsymbol{\eta}) & = & \theta_1(\eta_1, \boldsymbol{\eta}) \theta_2(\boldsymbol{\eta}) \textrm{,}
\end{eqnarray*}
where
\begin{eqnarray}
\label{theta1}
\theta_1(\eta_1, \boldsymbol{\eta}) & = & \psi_{\eta_2\eta_3\eta_4}(\eta_1) \textrm{,}
\end{eqnarray}
and
\begin{eqnarray}
\label{theta2}
\theta_2(\boldsymbol{\eta}) & = & \varphi^{\ast}(\eta_2\eta_3\eta_4)  \varphi^{\ast}(\eta_2\eta_3\eta_4\eta_5\eta_6\eta_7) \textrm{.}
\end{eqnarray}

\subsection{Summation over $\eta_1$}

We now need to sum the main term of $N(\eta_1,\boldsymbol{\eta},B)$ over $\eta_1 \in  \mathbb{Z}_{> 0}$ where $\eta_1$ is subject to the conditions \eqref{log} and \eqref{loglog} (the condition \eqref{height new} is implied by the definition of $g_2$) and to the coprimality condition \eqref{gcd4}. We start by proving that we can remove the restrictions that $\eta_1$ satisfies the conditions \eqref{log} and \eqref{loglog}. Indeed, let us first assume that we have the condition
\begin{eqnarray}
\label{opposite}
\eta_1 \sq(\eta_2 \eta_3 \eta_4) & < & B^{15/\log(\log(B))} \textrm{.}
\end{eqnarray}
The bound of lemma \ref{bounds} for $g_2$ implies that the main term $M(\eta_1, \boldsymbol{\eta},B)$ of $N(\eta_1,\boldsymbol{\eta},B)$ satisfies
\begin{eqnarray*}
M(\eta_1, \boldsymbol{\eta},B) & \ll & \frac{B^{2/3}}{\boldsymbol{\eta}^{(1/3,1/3,1/3,2/3,2/3,2/3)}} \textrm{.}
\end{eqnarray*}
Let us now sum this quantity over $\eta_7$ using the condition \eqref{height new} and then over $\eta_1$ using the condition \eqref{opposite}, we obtain
\begin{eqnarray*}
\sum_{\eta_1, \boldsymbol{\eta}} M(\eta_1, \boldsymbol{\eta},B) & \ll &
\sum_{\eta_1, \eta_2, \eta_3, \eta_4, \eta_5, \eta_6} \frac{B}{\eta_1\boldsymbol{\eta}^{(1,1,1,1,1,0)}} \\
& \ll & \sum_{\eta_2, \eta_3, \eta_4, \eta_5, \eta_6} \frac{B \log(B)}{\boldsymbol{\eta}^{(1,1,1,1,1,0)}\log(\log(B))} \\
& \ll & \frac{B \log(B)^6}{\log(\log(B))} \textrm{.}
\end{eqnarray*}
This error term is satisfactory. Let us now assume that we have the condition
\begin{eqnarray*}
\eta_1^3 \eta_2^2 \eta_3^2 \eta_4^2 \eta_5 \eta_6 \eta_7 & < & \frac{B}{\log(\log(B))} \textrm{.}
\end{eqnarray*}
Let us sum over $\eta_1$ using this condition, we get
\begin{eqnarray*}
\sum_{\eta_1, \boldsymbol{\eta}} M(\eta_1, \boldsymbol{\eta},B) & \ll &
\sum_{\boldsymbol{\eta}} \frac{B}{\boldsymbol{\eta}^{(1,1,1,1,1,1)} \log(\log(B))^{1/3}} \\
& \ll & \frac{B \log(B)^6}{\log(\log(B))^{1/3}} \textrm{.}
\end{eqnarray*}
This error term is also satisfactory. We can thus remove the restrictions that $\eta_1$ satisfies the conditions \eqref{log} and \eqref{loglog} and we proceed to sum over $\eta_1$.  Recall the definition \eqref{V} of $\mathcal{V}$. For fixed  $\boldsymbol{\eta} \in \mathcal{V}$ satisfying the coprimality conditions \eqref{gcd5} and \eqref{gcd6}, let $N(\boldsymbol{\eta},B)$ be the sum of the main term of
$N(\eta_1,\boldsymbol{\eta},B)$ over $\eta_1$, where $\eta_1$ is subject to the coprimality condition \eqref{gcd4}. Recall the definition \eqref{Upsilon} of $\Upsilon$. We now prove the following lemma.

\begin{lemma}
\label{summation 7}
We have the estimate
\begin{eqnarray*}
N(\boldsymbol{\eta},B) & = & \Upsilon \frac{\omega_{\infty}}{3} \frac{B}{\boldsymbol{\eta}^{(1,1,1,1,1,1)}} \Theta(\boldsymbol{\eta}) + R(\boldsymbol{\eta},B) \textrm{,}
\end{eqnarray*}
where $\Theta(\boldsymbol{\eta})$ is a certain arithmetic function defined in \eqref{Theta} and where
\begin{eqnarray*}
\sum_{\boldsymbol{\eta}} R(\boldsymbol{\eta},B) & \ll & B \log(B)^5 \textrm{.}
\end{eqnarray*}
\end{lemma}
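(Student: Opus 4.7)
The plan is to view $N(\boldsymbol{\eta}, B)$ as a smoothly weighted sum over $\eta_1$ to which I apply Lemma~\ref{arithmetic preliminary 1}. I first pull the $\boldsymbol{\eta}$-dependent factors out of the main term provided by Lemma~\ref{lemma inter} and note that, since $\theta_1(\eta_1,\boldsymbol{\eta}) = \psi_{\eta_2\eta_3\eta_4}(\eta_1)$ and the coprimality condition \eqref{gcd4} reads $\gcd(\eta_1,\eta_5\eta_6\eta_7)=1$, the summand factors through $\psi_{\eta_2\eta_3\eta_4,\eta_5\eta_6\eta_7}(\eta_1)\,g_2(\eta_1/Z_1)$. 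A direct inspection of the function $h$ in \eqref{equation h} shows that $h(x,y,t)\leq 1$ forces $t \leq 3^{1/3}$, so $g_2$ is supported on $[0,3^{1/3}]$; in particular the sum is effectively restricted to $\eta_1 \leq 3^{1/3} Z_1$, and $g_2$ is a non-negative bounded function with piecewise continuous derivative and only a bounded number of sign changes.

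Next I apply Lemma~\ref{arithmetic preliminary 1} on $I = [0, 3^{1/3} Z_1]$ with $a=\eta_2\eta_3\eta_4$, $b=\eta_5\eta_6\eta_7$ and $g(t)=g_2(t/Z_1)$. Since $g_2$ vanishes outside $[0,3^{1/3}]$, the integral collapses to
\begin{equation*}
\int_0^{3^{1/3} Z_1} g_2(t/Z_1)\,\D t \;=\; Z_1 \int_0^{\infty} g_2(u)\,\D u \;=\; Z_1 \cdot \frac{\omega_{\infty}}{3},
\end{equation*}
where the last equality follows from \eqref{omega} together with the definition \eqref{def g_2} of $g_2$. Multiplying by the prefactor $B^{2/3}\theta_2(\boldsymbol{\eta})/\boldsymbol{\eta}^{(1/3,1/3,1/3,2/3,2/3,2/3)}$ of Lemma~\ref{lemma inter}, the identity $B^{2/3} Z_1 = B/\boldsymbol{\eta}^{(2/3,2/3,2/3,1/3,1/3,1/3)}$ yields precisely the leading term $\Upsilon (\omega_\infty/3)\, B\,\Theta(\boldsymbol{\eta})/\boldsymbol{\eta}^{(1,1,1,1,1,1)}$ with
\begin{equation}
\label{Theta}
\Theta(\boldsymbol{\eta}) \;=\; \Psi(\eta_2\eta_3\eta_4,\eta_5\eta_6\eta_7)\,\theta_2(\boldsymbol{\eta}).
\end{equation}

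The main obstacle is the error analysis. Lemma~\ref{arithmetic preliminary 1} produces an error of size $O_\gamma\bigl(\sigma_{-\gamma/2}(\eta_2\eta_3\eta_4\eta_5\eta_6\eta_7)\,Z_1^{\gamma}\bigr)$, so after restoring the prefactor each term of $R(\boldsymbol{\eta},B)$ is bounded by
\begin{equation*}
B^{(2+\gamma)/3}\,\frac{\sigma_{-\gamma/2}(\eta_2\eta_3\eta_4\eta_5\eta_6\eta_7)}{\boldsymbol{\eta}^{\left((1+2\gamma)/3,\,(1+2\gamma)/3,\,(1+2\gamma)/3,\,(2+\gamma)/3,\,(2+\gamma)/3,\,(2+\gamma)/3\right)}}.
\end{equation*}
Summing this over $\boldsymbol{\eta} \in \mathcal{V}$ is the delicate point: fixing a sufficiently small $\gamma > 0$, using $\sigma_{-\gamma/2}(n) \ll n^{\varepsilon}$, and exploiting the constraints built into $\mathcal{V}$ (namely $\eta_4\eta_7^2 \leq \eta_2\eta_5^2$ and $\eta_4\eta_7^2 \leq \eta_3\eta_6^2$, which control $\eta_4\eta_7^2$ by a geometric mean of the other two, together with $q_8, q_9 \leq Y \log\log(B)^{2/3}/Z_1^2$), a dyadic decomposition shows that the $Z_1^{\gamma} \ll B^{\gamma/3}$ savings compensate the infinitesimal loss on each exponent. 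The contributions coming from summing the main term $M(\eta_1,\boldsymbol{\eta},B)$ over those $\eta_1$ violating \eqref{log} or \eqref{loglog} are handled via the crude bound $g_2 \ll 1$ and the same kind of trivial summation used in lemmas~\ref{lemmaloglog1} and~\ref{lemmaloglog2}. Gathering everything yields the total error $\sum_{\boldsymbol{\eta}} R(\boldsymbol{\eta},B) \ll B \log(B)^5$, completing the proof.
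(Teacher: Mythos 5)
Your derivation of the main term is correct and is essentially the paper's argument: apply Lemma~\ref{arithmetic preliminary 1} to the weight $\psi_{\eta_2\eta_3\eta_4,\eta_5\eta_6\eta_7}(\eta_1)\,g_2(\eta_1/Z_1)$, use the support of $g_2$ in $[0,3^{1/3}]$ to get $\int g_2(t/Z_1)\,\D t = Z_1\omega_\infty/3$, and your $\Theta = \Psi(\eta_2\eta_3\eta_4,\eta_5\eta_6\eta_7)\,\theta_2$ coincides with \eqref{Theta}. The gap is in the error analysis, which you yourself flag as the delicate point but then only sketch, and the specific devices you invoke do not deliver the stated bound. Bounding $\sigma_{-\gamma/2}(\eta_2\cdots\eta_7)$ by $(\eta_2\cdots\eta_7)^{\varepsilon}$ loses a factor which, after summing over the region $\eta_2^2\eta_3^2\eta_4^2\eta_5\eta_6\eta_7 \ll B$ (the only constraint actually needed, coming from $Z_1 \geq 3^{-1/3}$; the inequalities $q_8,q_9\geq q_{10}$ play no role here), produces $B^{1+\varepsilon}$ rather than $B\log(B)^5$, and taking $\gamma$ small does not repair this. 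The correct treatment (implicit in the paper, which takes $\gamma = 1/2$) is to open the divisor sum: $\sum_{n\leq N}\sigma_{-\gamma/2}(n)n^{-c} = \sum_d d^{-\gamma/2-c}\sum_{m\leq N/d}m^{-c} \ll N^{1-c}\sum_d d^{-1-\gamma/2} \ll N^{1-c}$, so the divisor factor costs only $O_\gamma(1)$ and the dyadic count then gives $B\log(B)^5$ as claimed.

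A second, structural point: in the paper, $N(\boldsymbol{\eta},B)$ is defined \emph{after} the restrictions \eqref{log} and \eqref{loglog} on $\eta_1$ have already been removed, and the cost of that removal, namely $O\bigl(B\log(B)^6/\log(\log(B))^{1/3}\bigr)$, is charged to the final error term of the theorem, not to this lemma. If, as you propose, you fold that removal into $R(\boldsymbol{\eta},B)$, then $\sum_{\boldsymbol{\eta}}R(\boldsymbol{\eta},B)$ cannot be $\ll B\log(B)^5$, since $B\log(B)^6/\log(\log(B))^{1/3}$ exceeds any constant multiple of $B\log(B)^5$. You need to keep those contributions outside the lemma (or weaken its conclusion accordingly).
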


\begin{proof}
Let us use lemma \ref{arithmetic preliminary 1} to sum over $\eta_1$. For any fixed $0 < \gamma \leq 1$, we obtain
\begin{eqnarray*}
N(\boldsymbol{\eta},B) & = & \Upsilon \frac{B}{\boldsymbol{\eta}^{(1,1,1,1,1,1)}} \Theta(\boldsymbol{\eta}) \int_{t > 0} g_2(t) \D t \\
\notag
& & + O \left( \frac{B^{2/3}}{\boldsymbol{\eta}^{(1/3,1/3,1/3,2/3,2/3,2/3)}} Z_1^{\gamma} \sigma_{-\gamma/2}(\eta_2 \eta_3 \eta_4 \eta_5 \eta_6 \eta_7)
\sup_{t > 0} g_2(t) \right) \textrm{,}
\end{eqnarray*}
where
\begin{eqnarray}
\label{Theta}
\ \ \ \ \ \Theta(\boldsymbol{\eta}) & = & \varphi^{\ast}(\eta_2\eta_3\eta_4) \varphi^{\ast}(\eta_2\eta_3\eta_4\eta_5\eta_6\eta_7) \varphi^{\ast}(\eta_5\eta_6\eta_7)
\varphi^{\curlyvee}(\eta_2\eta_3\eta_4\eta_5\eta_6\eta_7) \textrm{.}
\end{eqnarray}
Let us set $\gamma = 1/2$. Using the bound of lemma \ref{bounds} for $g_2$, we deduce that the overall contribution of this error term is
\begin{eqnarray*}
\sum_{\boldsymbol{\eta}} \frac{B^{5/6}}{\boldsymbol{\eta}^{(2/3,2/3,2/3,5/6,5/6,5/6)}} \sigma_{-1/4}(\eta_2 \eta_3 \eta_4 \eta_5 \eta_6 \eta_7) & \ll & B \log(B)^5 \textrm{,}
\end{eqnarray*}
where we have summed over $\boldsymbol{\eta}$ using the condition $Z_1 \geq 3^{-1/3}$. Recalling the definition of $g_2$ and the equality \eqref{omega}, we see that
\begin{eqnarray*}
\int_{t > 0} g_2(t) \D t & = & \frac{\omega_{\infty}}{3} \textrm{,}
\end{eqnarray*}
which completes the proof.
\end{proof}

\subsection{Conclusion}

It remains to sum the main term of $N(\boldsymbol{\eta},B)$ over the $\boldsymbol{\eta} \in \mathcal{V}$ satisfying the coprimality conditions \eqref{gcd5} and \eqref{gcd6}. It is easy to see that replacing $\mathcal{V}$ by the region
\begin{eqnarray*}
\mathcal{V}' & = & \left\{ \boldsymbol{\eta} \in \mathbb{Z}_{> 0}^6,
\begin{array}{l}
Y \geq q_8 Z_1^2, Y \geq q_9 Z_1^2 \\
Z_1 \geq 1, q_8 \geq q_{10}, q_9 \geq q_{10}
\end{array}
\right\} \textrm{,}
\end{eqnarray*}
produces an error term whose overall contribution is $\ll B \log(B)^5 \log(\log(\log(B)))$. Let us redefine the arithmetic function $\Theta$ as being equal to zero if the remaining coprimality conditions \eqref{gcd5} and \eqref{gcd6} are not satisfied. Recalling lemma \ref{N(B)}, we see that we have proved the following lemma.

\begin{lemma}
\label{final lemma}
We have the estimate
\begin{eqnarray*}
N_{U,H}(B) & = & \Upsilon \omega_{\infty} B \sum_{\boldsymbol{\eta} \in \mathcal{V}'} \frac{\Theta(\boldsymbol{\eta})}{\boldsymbol{\eta}^{(1,1,1,1,1,1)}}
+ O \left( \frac{B \log(B)^6}{\log(\log(B))^{1/6}} \right) \textrm{.}
\end{eqnarray*}
\end{lemma}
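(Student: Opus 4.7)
The plan is to stitch together the previous lemmas and then handle the replacement of the domain $\mathcal{V}$ of summation by $\mathcal{V}'$, absorbing the coprimality conditions \eqref{gcd5} and \eqref{gcd6} into $\Theta$ at the very end.

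First, Lemma \ref{N(B)} allows me to replace $N_{U,H}(B)$ by $3 N(B)$ up to an error $O(B \log(B)^6/\log(\log(B))^{1/6})$ which already matches the claimed one. By the very definition of $N(B)$ from subsection on setting up, it is the sum of $N(\eta_1,\boldsymbol{\eta},B)$ over positive $\eta_1$ coprime to $\eta_5 \eta_6 \eta_7$ (condition \eqref{gcd4}) and over $\boldsymbol{\eta} \in \mathcal{V}$ subject to the coprimality conditions \eqref{gcd5} and \eqref{gcd6}. The inner summation over $\eta_1$ has been performed in Lemma \ref{summation 7}, so I would obtain
\begin{eqnarray*}
3 N(B) & = & \Upsilon \omega_{\infty} B \sum_{\boldsymbol{\eta} \in \mathcal{V}} \frac{\Theta(\boldsymbol{\eta})}{\boldsymbol{\eta}^{(1,1,1,1,1,1)}} + O\left(B \log(B)^5\right)\textrm{,}
\end{eqnarray*}
with the convention that $\Theta$ vanishes as soon as one of \eqref{gcd5} or \eqref{gcd6} fails, the error absorbing $\sum_{\boldsymbol{\eta}} R(\boldsymbol{\eta},B) \ll B \log(B)^5$.

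It then remains to replace the sum over $\mathcal{V}$ by the sum over $\mathcal{V}'$. Since the defining inequalities of $\mathcal{V}'$ are strictly stronger than those of $\mathcal{V}$, one has $\mathcal{V}' \subset \mathcal{V}$, and the task reduces to bounding the contribution from the symmetric difference. This difference decomposes into three pieces according to which of the three conditions of $\mathcal{V}'$ fails: the two shells
\begin{eqnarray*}
Y \, < \, q_8 Z_1^2 & \leq & Y \log(\log(B))^{2/3}\textrm{,} \\
Y \, < \, q_9 Z_1^2 & \leq & Y \log(\log(B))^{2/3}\textrm{,}
\end{eqnarray*}
together with the dyadic strip $3^{-1/3} \leq Z_1 < 1$. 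Using the trivial bound $\Theta(\boldsymbol{\eta}) \ll 1$, which follows from the uniform boundedness of the local factors defining $\varphi^{\ast}$ and $\varphi^{\curlyvee}$, the contribution of each such piece can be estimated by a dyadic decomposition on the variables $\eta_2,\dots,\eta_7$. Five of the six dyadic ranges contribute $O(\log(B))$ each, while the remaining one is confined by the shell inequality to a logarithmic range of length $O(\log(\log(\log(B))))$ in the first two cases (resolving the shell for instance in the variable $\eta_2$ after fixing the other five) and $O(1)$ in the third case (where $Z_1$ is confined to the single dyadic range $[3^{-1/3},1)$). This yields
\begin{eqnarray*}
B \sum_{\boldsymbol{\eta} \in \mathcal{V} \setminus \mathcal{V}'} \frac{\Theta(\boldsymbol{\eta})}{\boldsymbol{\eta}^{(1,1,1,1,1,1)}} & \ll & B \log(B)^5 \log(\log(\log(B)))\textrm{.}
\end{eqnarray*}

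The main obstacle is the bookkeeping needed to isolate, in each of the first two shells, one free variable whose logarithmic range is directly controlled by the shell width $\log(\log(B))^{2/3}$. Once this is established, all the error terms collected along the way, namely $B \log(B)^6/\log(\log(B))^{1/6}$ from Lemma \ref{N(B)}, $B \log(B)^5$ from Lemma \ref{summation 7}, and $B \log(B)^5 \log(\log(\log(B)))$ from the domain replacement, are each dominated by $B \log(B)^6/\log(\log(B))^{1/6}$, which completes the proof.
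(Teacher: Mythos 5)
Your treatment of the passage from $\mathcal{V}$ to $\mathcal{V}'$ is correct and is essentially the computation the paper leaves implicit (the shell $Y < q_8 Z_1^2 \leq Y \log(\log(B))^{2/3}$ confines one variable to a multiplicative window of ratio $\log(\log(B))^{2/3}$, hence to $O(\log(\log(\log(B))))$ dyadic ranges, the strip $3^{-1/3} \leq Z_1 < 1$ to $O(1)$ ranges, and $\Theta \ll 1$), so that part is fine. The genuine gap is in the stitching step, where you write that ``the inner summation over $\eta_1$ has been performed in Lemma \ref{summation 7}'' and conclude that $3N(B)$ equals the sum over $\mathcal{V}$ up to $O(B\log(B)^5)$. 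This does not follow from lemma \ref{summation 7} as stated, for two reasons. First, $N(B)$ is a sum of the actual counts $N(\eta_1,\boldsymbol{\eta},B)$, whereas $N(\boldsymbol{\eta},B)$ in lemma \ref{summation 7} is by definition the sum of the \emph{main term} $M(\eta_1,\boldsymbol{\eta},B)$ only; to replace one by the other you must invoke lemma \ref{lemma inter}, whose error contributes $\ll B\log(B)^5\log(\log(B))^{7/3}$, which is already larger than your claimed $B\log(B)^5$. Second, and more importantly, in $N(B)$ the variable $\eta_1$ is restricted by the conditions \eqref{log} and \eqref{loglog}, while lemma \ref{summation 7} sums over \emph{all} $\eta_1$ subject only to \eqref{gcd4}. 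Removing these two restrictions is a real step: one has to sum $M(\eta_1,\boldsymbol{\eta},B)$ over the complementary ranges $\eta_1 \sq(\eta_2\eta_3\eta_4) < B^{15/\log(\log(B))}$ and $\eta_1^3\eta_2^2\eta_3^2\eta_4^2\eta_5\eta_6\eta_7 < B/\log(\log(B))$ using the bound $g_2 \ll 1$ of lemma \ref{bounds}, and the resulting errors are of size $B\log(B)^6/\log(\log(B))$ and $B\log(B)^6/\log(\log(B))^{1/3}$ respectively — not $B\log(B)^5$. (This is precisely why the restrictions \eqref{log} and \eqref{loglog} were imposed in the first place; without the removal argument, lemma \ref{summation 7} simply does not apply to the sum defining $N(B)$.)

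None of this invalidates your overall route: all of the omitted error terms are still $\ll B\log(B)^6/\log(\log(B))^{1/6}$, so once you insert lemma \ref{lemma inter} and carry out the removal of \eqref{log} and \eqref{loglog} (as the paper does in the text preceding lemma \ref{summation 7}), your argument closes and coincides with the paper's proof of lemma \ref{final lemma}. But as written, the intermediate claim $3N(B) = \Upsilon\omega_{\infty}B\sum_{\boldsymbol{\eta}\in\mathcal{V}}\Theta(\boldsymbol{\eta})/\boldsymbol{\eta}^{(1,1,1,1,1,1)} + O(B\log(B)^5)$ is unjustified (and false with that error term), and your final accounting of errors omits the two sources above, which is a concrete missing step rather than a matter of presentation.
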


The end of the paper is dedicated to the completion of the proof of theorem \ref{Manin}. Let us introduce the generalized Möbius function  $\boldsymbol{\mu}$ defined for $(n_1, \dots, n_6) \in \mathbb{Z}_{>0}^6$ by
$\boldsymbol{\mu}(n_1, \dots, n_6) = \mu(n_1) \cdots \mu(n_6)$. We set $\mathbf{k} = (k_2,k_3,k_4,k_5,k_6,k_7)$ and we define for $s \in \mathbb{C}$ such that $\Re(s) > 1$,
\begin{eqnarray*}
F(s) & = & \sum_{\boldsymbol{\eta} \in \mathbb{Z}_{>0}^6}
\frac{\left|(\Theta \ast \boldsymbol{\mu})(\boldsymbol{\eta})\right|}{\eta_2^s \eta_3^s \eta_4^s \eta_5^s \eta_6^s \eta_7^s} \\
& = & \prod_p \left( \sum_{\mathbf{k} \in \mathbb{Z}_{\geq 0}^6}
\frac{\left|(\Theta \ast \boldsymbol{\mu}) \left( p^{k_2},p^{k_3},p^{k_4},p^{k_5}, p^{k_6}, p^{k_7} \right)\right|}
{p^{k_2 s} p^{k_3 s} p^{k_4 s} p^{k_5 s} p^{k_6 s} p^{k_7 s}} \right) \textrm{.}
\end{eqnarray*}
It is easy to check that if $\mathbf{k} \notin \{0,1\}^6$ then
$(\Theta \ast \boldsymbol{\mu}) \left( p^{k_2},p^{k_3},p^{k_4},p^{k_5},p^{k_6}, p^{k_7} \right) = 0$ and if exactly one of the $k_i$ is equal to $1$, then
$(\Theta \ast \boldsymbol{\mu}) \left( p^{k_2},p^{k_3},p^{k_4},p^{k_5}, p^{k_6}, p^{k_7} \right) \ll 1/p$, so the local factors $F_p$ of $F$ satisfy
\begin{eqnarray*}
F_p(s) & = & 1 + O \left( \frac1{p^{ \min \left( \Re(s)+1, 2 \Re(s) \right)}} \right) \textrm{.}
\end{eqnarray*}
This proves that the function $F$ converges in the half-plane $\Re(s) > 1/2$, which implies that $\Theta$ satisfies the assumption of \cite[Lemma $8$]{MR2853047}. The application of this lemma provides
\begin{eqnarray}
\label{sum1}
\ \ \ \ \ \ \ \sum_{\boldsymbol{\eta} \in \mathcal{V}'} \frac{\Theta(\boldsymbol{\eta})}{\boldsymbol{\eta}^{(1,1,1,1,1,1)}} & = & \alpha \left( \sum_{\boldsymbol{\eta} \in \mathbb{Z}_{>0}^6}
\frac{(\Theta \ast \boldsymbol{\mu})(\boldsymbol{\eta})}{\boldsymbol{\eta}^{(1,1,1,1,1,1)}} \right) \log(B)^6 + O \left( \log(B)^5 \right) \textrm{,}
\end{eqnarray}
where $\alpha$ is the volume of the polytope defined in $\mathbb{R}^6$ by $t_2,t_3,t_4,t_5,t_6,t_7 \geq 0$ and
\begin{eqnarray*}
2 t_2 - t_3 - t_4 + 4 t_5 - 2 t_6 - 2 t_7 & \leq & 1 \textrm{,} \\
- t_2 + 2 t_3 - t_4 -2 t_5 + 4 t_6 -2 t_7 & \leq & 1 \textrm{,} \\
2 t_2 + 2 t_3 +2 t_4 + t_5 + t_6 + t_7 & \leq & 1 \textrm{,} \\
- t_2 + t_4 - 2 t_5 + 2 t_7 & \leq & 0 \textrm{,} \\
- t_3 + t_4 - 2 t_6 + 2 t_7 & \leq & 0 \textrm{.}
\end{eqnarray*}
It is easy to achieve the computation of $\alpha$ using Franz's additional \textit{Maple} package Convex \cite{Convex}. We find
$\alpha = 1/23040$, that is to say
\begin{eqnarray}
\label{alpha}
\alpha & = & \alpha(\widetilde{V}) \textrm{.}
\end{eqnarray}
Furthermore, we have
\begin{eqnarray*}
\sum_{\boldsymbol{\eta} \in \mathbb{Z}_{>0}^6} \frac{(\Theta \ast \boldsymbol{\mu}) (\boldsymbol{\eta})}{\boldsymbol{\eta}^{(1,1,1,1,1,1)}} & = &
\prod_p \left( \sum_{\mathbf{k} \in \mathbb{Z}_{\geq 0}^6}
\frac{(\Theta \ast \boldsymbol{\mu}) \left( p^{k_2},p^{k_3},p^{k_4},p^{k_5},p^{k_6},p^{k_7} \right)}{p^{k_2}p^{k_3}p^{k_4}p^{k_5}p^{k_6}p^{k_7}} \right) \\
& = & \prod_p \left( 1 - \frac1{p} \right)^6 \left( \sum_{\mathbf{k} \in \mathbb{Z}_{\geq 0}^6}
\frac{\Theta \left( p^{k_2},p^{k_3},p^{k_4},p^{k_5},p^{k_6},p^{k_7} \right)}{p^{k_2}p^{k_3}p^{k_4}p^{k_5}p^{k_6}p^{k_7}} \right) \textrm{.}
\end{eqnarray*}
The calculation of these local factors is straightforward and we find
\begin{eqnarray*}
\sum_{\mathbf{k} \in \mathbb{Z}_{\geq 0}^6}
\frac{\Theta \left( p^{k_2},p^{k_3},p^{k_4},p^{k_5},p^{k_6},p^{k_7} \right)}{p^{k_2}p^{k_3}p^{k_4}p^{k_5}p^{k_6}p^{k_7}} & = &
\varphi^{\curlyvee}(p) \left( 1 - \frac1{p} \right) \left( 1 + \frac{7}{p} + \frac1{p^2} \right) \textrm{.}
\end{eqnarray*}
We finally obtain
\begin{eqnarray}
\label{sum2}
\sum_{\boldsymbol{\eta} \in \mathbb{Z}_{>0}^6} \frac{(\Theta \ast \boldsymbol{\mu}) (\boldsymbol{\eta})}{\boldsymbol{\eta}^{(1,1,1,1,1,1)}} & = &
\Upsilon^{-1} \prod_p \left( 1 - \frac1{p} \right)^7 \omega_p \textrm{.}
\end{eqnarray}
Putting together the equalities \eqref{sum1}, \eqref{alpha}, \eqref{sum2} and lemma \ref{final lemma} completes the proof of theorem \ref{Manin}.

\bibliographystyle{amsalpha}
\bibliography{biblio}

\end{document}